\newtheorem{thm}{Theorem}[section]
\newtheorem{cor}[thm]{Corollary}
\newtheorem{lem}[thm]{Lemma}
\newtheorem{prop}[thm]{Proposition}
\newtheorem{exam}[thm]{Example}
\theoremstyle{remark}
\newtheorem{rmk}[thm]{Remark}
\theoremstyle{definition}
\newtheorem{defi}[thm]{Definition}
\def \G {\mathcal{G}}
\def \C {\mathbb{C}}
\def \E {\mathcal{E}}
\def \F {\mathcal{F}}
\def \O {\mathcal{O}}
\def \P {\mathbb{P}}
\def \Tr {\text{Tr}}
\def \db {\bar \partial}
\DeclareMathOperator \Sing {Sing}
\DeclareMathOperator \Quot {Quot}
\DeclareMathOperator\dVol {dVol}
\DeclareMathOperator\Vol {Vol}
\DeclareMathOperator \Id {Id}
\newcommand{\CBbb}{\mathbb C}
\newcommand{\PBbb}{\mathbb P}
\newcommand{\QBbb}{\mathbb Q}
\newcommand{\ZBbb}{\mathbb Z}
\newcommand{\Ccal}{\mathcal C}
\newcommand{\Ecal}{\mathcal E}
\newcommand{\Fcal}{\mathcal F}
\newcommand{\Gcal}{\mathcal G}
\newcommand{\Hcal}{\mathcal H}
\newcommand{\Lcal}{\mathcal L}
\newcommand{\Ocal}{\mathcal O}
\newcommand{\Qcal}{\mathcal Q}
\newcommand{\Scal}{\mathcal S}
\newcommand{\Tcal}{\mathcal T}
\newcommand{\Xcal}{\mathcal X}
\newcommand{\SL}{\mathsf{SL}}
\newcommand{\PGL}{\mathsf{PGL}}
\DeclareMathOperator{\Hom}{Hom}
\DeclareMathOperator{\id}{id}
\DeclareMathOperator{\ch}{ch}
\DeclareMathOperator{\imag}{Im}
\DeclareMathOperator{\rank}{rk}
\DeclareMathOperator{\codim}{codim}
\DeclareMathOperator{\vol}{vol}
\DeclareMathOperator{\Sym}{Sym}
\DeclareMathOperator{\Spec}{Spec}
\DeclareMathOperator{\PD}{PD}
\DeclareMathOperator{\Td}{Td}
\DeclareMathOperator{\supp}{supp}
\newcommand{\lra}{\longrightarrow}
\newcommand{\Gr}{{\rm Gr}}
\newcommand{\BM}{\mathsf{BM}}
\numberwithin{equation}{section}
\begin{document}
\title[Singular Donaldson-Uhlenbeck-Yau Theorem and Degeneration]{ A Donaldson-Uhlenbeck-Yau theorem for normal varieties and semistable bundles on degenerating families}

\author[Chen]{Xuemiao Chen}
\address{Department of Pure Mathematics, University of Waterloo, Waterloo, Ontario
Canada N2L 3G1}
\email{x67chen@uwaterloo.ca}

\author[Wentworth]{Richard A. Wentworth}
\thanks{R.W.'s research is supported in part by NSF grants DMS-1906403
and DMS-2204346.}
\address{Department of Mathematics, University of Maryland, College Park,
MD 20742, USA}
\email{raw@umd.edu}

\subjclass[2020]{Primary: 53C07, 14J60; Secondary: 14D06 }
\keywords{Hermitian-Yang-Mills connections, Bogomolov-Gieseker
inequality, Mehta-Ramanathan theorem}

\begin{abstract}
In this paper, we first prove a  Donaldson-Uhlenbeck-Yau theorem over 
       projective normal varieties smooth in codimension two. 
      As a consequence we deduce
      the polystability of (dual)
      tensor products of stable reflexive sheaves,  and we give
      a new proof of
    the Bogomolov-Gieseker inequality,
     along with a precise characterization of
    the case of  equality. This also improves
     several previously known algebro-geometric 
    results on normalized tautological classes. 
    We study the limiting behavior of 
    semistable bundles over a degenerating family of projective normal 
    varieties. In the case of a family of stable
   vector bundles, we study the degeneration of the corresponding HYM connections and these can be characterized from the algebro-geometric perspective. In particular, this proves another version of the singular Donaldson-Uhlenbeck-Yau theorem for the normal projective varieties in the central fiber. As an application, we apply the results to  the degeneration of stable bundles through
    the deformation to projective cones,  and we explain how our results are
    related to the Mehta-Ramanathan restriction theorem. 
\end{abstract}

\maketitle
\thispagestyle{empty}
\bibliographystyle{amsplain}
\allowdisplaybreaks
\tableofcontents

\section{Introduction}
Two fundamental results in the study of stable bundles over compact
K\"ahler and projective algebraic manifolds are
the Donaldson-Uhlenbeck-Yau (DUY) theorem and the Mehta-Ramanathan (MR) 
restriction theorem. The DUY theorem
proves the existence of a Hermitian-Einstein (HE) metric 
on any 
 slope stable 
vector bundle $\E$ over a compact K\"ahler manifold $(Y, \omega)$
\cite{Donaldson:85, UhlenbeckYau:86, Donaldson:87a}.
The associated Chern connections are called Hermitian-Yang-Mills (HYM)
connections.
The result was generalized by Bando and
Siu to  stable reflexive sheaves  via the
introduction of \emph{admissible} metrics \cite{BandoSiu:94}.
Important consequences include the polystability of tensor products of
stable reflexive sheaves,  as well as the Bogomolov-Gieseker inequality and
a characterization of the case of equality in terms of  projectively flat bundles.
It moreover gives equivalent characterizations of reflexive sheaves with 
nef normalized tautological class. 
We refer to \cite{Nakayama:04} for a more detailed discussion of this. 

Suppose now  that $(Y, \omega)$ is a Hodge manifold, i.e. $[\omega]=c_1(L)$ for some
ample line bundle $L\to Y$.
Let $\F$ be a semistable (resp.\ stable) bundle over $Y$. 
The  MR restriction theorem states for $k$ sufficiently large, 
$\F|_{V}$ is semistable (resp.\ stable) for generic $V\in \P(H^0(Y, L^k))$
\cite{MehtaRamanathan:84, Flenner:84}.
This result was instrumental in Donaldson's proof of the DUY theorem in
higher dimensions \cite{Donaldson:87a}. 
We refer to \cite{HuybrechtsLehn:10} for a discussion
of some other important consequences of the restriction theorem. 

The first result of this paper is a  version of the
DUY and Bando-Siu
theorems for stable reflexive sheaves over projective normal varieties $Y$
which are smooth in codimension $2$. A motivation for a generalization  to the singular case comes from  
the corresponding consequences mentioned above, just as the smooth case. 

Next we study the behavior of semistable bundles on degenerating families,
which gives us another version of the DUY theorem. The motivation  here is to give a strategy for an analytic proof of the MR theorem. 
The idea goes as follows. 
Suppose $\F$ is a stable bundle over a smooth projective manifold $(Y,\omega, L)$, 
with $V\subset Y$  as above.
Deform $Y$ to the  projective normal  cone 
(see Section \ref{Section: MR theorem} for details). 
The naive guess is that the HYM connection on $Y$ deforms  to  a HYM connection on the
cone,  which by the symmetry of the cone
should have implications for the (semi)stability of the restriction.
There is price to pay here due to the fact that we have introduced
singularities on the base. 
Nevertheless, 
assuming the restriction theorem holds on the cone, then by a continuity argument 
one can conclude that the restriction $\Fcal\bigr|_V$ 
is semistable (resp.\ stable). 
Working backwards,
in  the case where the restriction theorem actually does hold, 
one can show that the sheaf defined by the limiting HYM connection is
indeed of cone type, in the obvious sense. 
However, there are plenty of examples that tell us that
this is \emph{not} always true, i.e.\  the limiting HYM connection does not have
to be a cone in general. Also,  as in the singular DUY theorem, a technical
difficulty is that we introduced a singularity to the base,  and the known analytic
results about admissible HYM connections on smooth manifolds
are not known to hold across the singular set in general.

Given the discussion above,  a closely related question naturally arises: 
what is the structure of the limiting HYM connection in general?
This leads to the study of the behavior of semistable bundles over a
degenerating family 
of smooth varieties with a normal central fiber. 
It turns out that  an algebraic geometric result can be constructed  
in this setting using known techniques, 
and this can in turn be used to characterize the analytic 
degeneration. In the special case of a deformation to the projective cone,
this picture  is closely related to the MR restriction theorem.
Indeed, we show that  the limiting HYM connection on the cone 
restricts to the HYM connection on $\F|_V$, if the latter is assumed to be  stable.
Thus,  the HYM connection on $\F|_V$ is obtained from  the HYM connection 
on $\F$ by deforming the K\"ahler metric on the base.

\subsection{Main results}
We now state more precisely the results sketched above.

\subsubsection{Singular DUY theorem} \label{sec:assumptions}
In this section, we assume that $(Y, \omega)$ is a normal projective
variety smooth in codimension $2$, with  
$Y\subset \P^N$ and $\omega=\omega_{FS}|_{Y^{reg}}$.  
Slope stability  of reflexive sheaves on $Y$ with respect to 
$\omega$ can be defined as in the nonsingular case,
and this agrees with the algebro-geometric notion of slope stability with
respect to the hyperplane section $D$ on $Y$ (see Section
\ref{sec:chern}).
With this understood, the first main result is the following. 
\begin{thm}[{\sc Singular DUY}]\label{Singular DUY}
Let $\F$ be a stable reflexive sheaf over $(Y, \omega)$. Then there exists an admissible HE metric on $\F$
    that is unique up to a constant scaling.
    Moreover, for any local holomorphic section $s$ of $\F$, 
    $\log^+|s|^2\in W^{1,2}_{loc}\cap L^\infty_{loc}$.
\end{thm}

As a corollary of Theorem \ref{Singular DUY}, we have  (see Corollary
\ref{cor:polystability}),
\begin{cor} 
Given $\F_1$ and $\F_2$ stable reflexive sheaves over
    $(Y, \omega)$, then $(\Sym^{k}\F_1)^{**}, (\wedge^k \F_1)^{**}, (\F_1 \otimes \F_2)^{**}$ are all polystable. 
\end{cor}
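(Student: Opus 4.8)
The plan is to split the statement into two halves. First, I would show that each of the reflexive hulls $(\Sym^k\F_1)^{**}$, $(\wedge^k\F_1)^{**}$, and $(\F_1\otimes\F_2)^{**}$ carries an admissible Hermitian-Einstein metric; second, I would invoke the converse of the singular DUY theorem, namely that \emph{any} reflexive sheaf admitting an admissible HE metric is polystable. The difficult, existence half is already done: by Theorem \ref{Singular DUY}, the stable sheaves $\F_1,\F_2$ admit admissible HE metrics $h_1,h_2$, normalized so that $\ct F_{h_i}=\gamma_i\,\Id$ on the regular locus, with $\gamma_i$ the constant fixed by $\slope(\F_i)$ and $\Vol(Y)$. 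The remaining work is to push these metrics through the algebraic operations and then run the (comparatively soft) converse.

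Let $Y_0\subset Y^{reg}$ be the open set on which both $\F_1$ and $\F_2$ are locally free. Since $Y$ is smooth in codimension two and a reflexive sheaf on a smooth variety is locally free off a set of codimension at least three, the complement $Y\setminus Y_0$ has complex codimension $\ge 2$ (in fact $\ge 3$ away from $\Sing(Y)$). Over $Y_0$ the metrics $h_1,h_2$ induce Hermitian metrics on $\Sym^k\F_1$, $\wedge^k\F_1$, and $\F_1\otimes\F_2$ functorially, and these coincide with metrics on the locally free parts of the respective reflexive hulls. A pointwise curvature computation shows the induced Chern connections are again HE: on $\F_1\otimes\F_2$ one gets $\ct F=(\gamma_1+\gamma_2)\Id$, and on $\Sym^k\F_1$ and $\wedge^k\F_1$ one gets $\ct F=k\gamma_1\Id$. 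Admissibility also propagates, since the induced curvature is a universal algebraic expression in $F_{h_1},F_{h_2}$ and the metrics; thus the $L^2$ bound $\int_{Y_0}|F|^2\,\dVol<\infty$ and the local boundedness required of an admissible metric are inherited. As $Y\setminus Y_0$ has codimension $\ge 2$, these bounds let the induced metric be recognized as an admissible HE metric on the corresponding reflexive hull.

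For the converse, suppose $\Gcal$ is reflexive and carries an admissible HE metric $h$ with $\ct F_h=\gamma\,\Id$. I would establish polystability by the standard splitting argument, adapted to the singular setting. Given a coherent subsheaf $\Scal\subset\Gcal$ with $\slope(\Scal)=\slope(\Gcal)$ and $0<\rank\Scal<\rank\Gcal$, represent it over $Y_0$ by the orthogonal projection $\pi$ onto its saturation. A Chern-Weil computation together with the HE equation forces $\bp\pi=0$ and $\pi=\pi^*=\pi^2$ in the weak sense; here the regularity in Theorem \ref{Singular DUY}---that local sections satisfy $\log^+|s|^2\in H^1_{loc}$ and global sections lie in $L^\infty$---is exactly what licenses the integration by parts across $Y\setminus Y_0$ and makes $\pi$ a genuine holomorphic parallel projection on $Y_0$. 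Thus $\Gcal|_{Y_0}$ splits holomorphically and orthogonally as $\Scal\oplus\Scal^{\perp}$, and passing to reflexive hulls extends the splitting across $Y\setminus Y_0$. Iterating on the summands terminates by rank and exhibits $\Gcal$ as a direct sum of stable reflexive sheaves of equal slope, i.e.\ polystable. Applying this to the three induced HE sheaves gives the corollary.

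The main obstacle is analytic rather than formal. The two delicate points---verifying that the induced metrics are genuinely admissible, and running the parallel-projection argument---both require controlling the relevant $L^2$ and weak-holomorphicity statements across $\Sing(Y)$ and the non-locally-free loci, where the smooth Bando-Siu theory does not apply verbatim. The crucial leverage is the section regularity asserted in Theorem \ref{Singular DUY}; once that is available, the codimension bound on $Y\setminus Y_0$ makes the extension and integration-by-parts steps go through, and the functoriality of the tensor, symmetric, and exterior constructions supplies the rest.
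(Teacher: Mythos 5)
Your proposal coincides with the paper's (implicit) proof of this corollary: Theorem \ref{Singular DUY} supplies admissible HE metrics on $\F_1,\F_2$, the functorially induced metrics on $(\Sym^k\F_1)^{**}$, $(\wedge^k\F_1)^{**}$, $(\F_1\otimes\F_2)^{**}$ are admissible HE (with constants $k\gamma_1$, $k\gamma_1$, $\gamma_1+\gamma_2$) since the non-locally-free loci have high codimension, and polystability then follows from the paper's converse statement, Corollary \ref{Admissible HYM implies polystability}. The only cosmetic difference is that you sketch that converse via weakly holomorphic projections in the Uhlenbeck-Yau style, whereas the paper (Corollaries \ref{cor:polystable} and \ref{Admissible HYM implies polystability}) runs the integration-by-parts argument on bounded holomorphic sections of twisted exterior powers; since the converse is available to cite, this does not change the substance of your proof.
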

As in the case of smooth manifolds, Theorem \ref{Singular DUY} 
 gives rise to an analytic proof of the Bogomolov-Gieseker inequality of
 Miyaoka \cite[Cor.\ 4.7]{Miyaoka:87},  as
well as a characterization of the case of  equality. 
For the definitions of Chern classes and the pairing below, see Section
\ref{sec:chern}.
\begin{cor}[{\sc Bogomolov-Gieseker inequality}] \label{cor:bogomolov}
    Let $[H]$ be the class of a hyperplane in $Y\subset\PBbb^N$. 
 Suppose $\F\to Y$ is a stable reflexive sheaf. Then 
$$
    (2rc_2(\F)-(r-1)c_1(\F)^2) \cdot [H]^{n-2} \geq 0\ .
$$
Moreover, the equality holds if and only if $\F$ is projectively flat outside $\Sing(\F)$. 
\end{cor}

\begin{rmk}
 We emphasize that   the inequality in Corollary
        \ref{cor:bogomolov}
    is well known. For projective varieties it can be  reduced to the
        surface case by using the MR restriction theorem
        \cite{Miyaoka:87}. More recently, it was extended to 
        K\"ahler spaces  \cite{Wu:21}. The novelty in the statement
        above  is the characterization of the case of equality. 
\end{rmk}

In Section \ref{Section: Critical case of BMY}, we use Theorem
\ref{Singular DUY} to  improve results of H\"oring-Peternell
on normalized tautological classes. 

\subsubsection{Degeneration}
Let $p:\Xcal \rightarrow \P^1= \C \cup \{\infty\}$ be a flat family of
projective varieties so that $X_t:=p^{-1}(t)$ is smooth for $t\neq 0$ and
$X_0$ is normal. A family $\{\Ecal_t\}_{t\neq 0}$ of semistable torsion-free sheaves
$\E_t\to X_t$ is called  ``algebraic'' if we can 
find  a coherent sheaf $\Ecal\to \mathcal{X}$, such that
$\Ecal\bigr|_{X_t}\simeq \Ecal_t$. 
We say    $\E\to \Xcal$ is a semistable degeneration of $\{\E_t\}_{t\neq0}$
if furthermore $\Ecal$ is flat over $\PBbb^1$ (cf.\ \cite[p.\ 34]{HuybrechtsLehn:10}), and $\E_0=\E|_{X_0}$ is torsion-free and semistable. Recall that  
for a semistable sheaf, $\Gr(\Ecal_0)$ and $\Ccal(\Ecal_0)$ denote the
associated graded sheaf of the Jordan-H\"older filtration of $\Ecal_0$,  and its
associated codimension $2$ cycle, respectively 
(see Definition \ref{Definition:algebraic blow-up locus}).
On the algebraic side, we prove the following.

\begin{thm}\label{Theorem: algebraic degeneration}
    A semistable degeneration always exists for any algebraic family
    $\{\Ecal_t\}$ where  $\E_t$ are semistable bundles for all $t\neq 0$. Furthermore, for any two 
    semistable degenerations $\E$ and $\E'$, 
$
\Gr(\E_0)^{**}=\Gr(\E'_0)^{**}
$.
If $X_0$ is smooth in codimension $2$,  then we also have $\mathcal{C}(\E_0)=\mathcal{C}(\E'_0)$.
\end{thm}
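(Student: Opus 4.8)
The plan is to establish the three assertions in turn. For existence I would adapt Langton's method to the singular base. Starting from $\Ecal\to\Xcal$, first quotient by the subsheaf of sections supported on $X_0$ and saturate along $X_0$, so that the result is flat near $0$ with torsion-free central fiber. If $\E_0$ is not semistable, let $\Bcal\subset\E_0$ be its maximal destabilizing subsheaf and pass to the elementary (Hecke) modification $\E':=\ker(\E\to\E_0/\Bcal)$, which is unchanged on $X_t$ for $t\neq0$. With the slope taken with respect to $\omega=\omega_{FS}|_{Y^{reg}}$ as in Section \ref{sec:chern}, Langton's estimate shows that an invariant built from the Harder--Narasimhan filtration of the central fiber strictly decreases under this operation and is bounded below, so the process terminates at a semistable $\E_0$. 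The only point needing attention is that boundedness of the destabilizing subsheaves and the monotonicity estimate persist for torsion-free sheaves on the normal variety $X_0$; since $X_0$ is normal these filtrations exist and the classical bounds go through.

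For the invariance of $\Gr(\E_0)^{**}$ I would run the separatedness half of Langton's analysis. Given two degenerations $\E,\E'$ that agree on $X_t$ ($t\neq0$), the generic isomorphism extends, after multiplying by a suitable power of $t$, to a map $\varphi\colon\E\to\E'$ whose restriction $\varphi_0\colon\E_0\to\E'_0$ is nonzero. Since $\E_0$ and $\E'_0$ are semistable of the same slope, the kernel and image of $\varphi_0$ are sub- and quotient sheaves forced to carry that same slope, and iterating the comparison identifies the Jordan--H\"older graded pieces of $\E_0$ and $\E'_0$ away from a codimension-two subset of $X_0$. As a reflexive sheaf is determined by its restriction to the complement of a codimension-two set, this yields $\Gr(\E_0)^{**}=\Gr(\E'_0)^{**}$.

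For the cycle $\Ccal(\E_0)$ I would argue by conservation of the second Chern class, now localized. The hypothesis $\codim\Sing(\E_t)\geq3$ ensures that the nearby fibers carry no codimension-two defect, so $\Ccal(\E_0)$ is entirely a degeneration effect, and $X_0$ smooth in codimension two makes $\Ccal$ well defined. Additivity of Chern classes along the Jordan--H\"older filtration gives $c_2(\E_0)=c_2(\Gr(\E_0))$, and the defect sequence $0\to\Gr(\E_0)\to\Gr(\E_0)^{**}\to\Qcal\to0$ with $\Qcal$ supported in codimension two expresses $c_2(\Gr(\E_0))=c_2(\Gr(\E_0)^{**})+[\Ccal(\E_0)]$. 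Combined with the previous step this already forces the numerical equality $\deg\Ccal(\E_0)=\deg\Ccal(\E'_0)$. The main obstacle is to promote this to an equality of cycles. My plan is to work at the generic point $\eta_Z$ of each irreducible codimension-two component $Z$: there a transverse slice of $\Xcal$ is a flat family of smooth surfaces over a disk, on which $\E$ and $\E'$ restrict to flat families of sheaves. On such a surface family the defect is the single integer $c_2-c_2(\,\cdot\,^{**})$, so conservation of $c_2$ in the slice, together with the equality of $\Gr(\E_0)^{**}$ from the previous step, pins down the multiplicity $\mult_Z\Ccal(\E_0)=\mult_Z\Ccal(\E'_0)$ along every $Z$, giving $\Ccal(\E_0)=\Ccal(\E'_0)$. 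Making this local slicing, and the attendant conservation law, precise at the singular central fiber is the part I expect to be the most delicate.
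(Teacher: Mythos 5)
Your existence step follows the paper's route (iterated Hecke transforms along maximal destabilizing subsheaves, following Langton), but the termination mechanism you give is not correct, and this is the crux of that method. The invariant $\beta_k=\mu(\underline{\F}_k)-\mu(\E^{(k)}_0)$ attached to the $k$-th Hecke transform does \emph{not} strictly decrease; it is only non-increasing, and being discrete it eventually stabilizes, possibly at a positive value. No refinement of it (e.g.\ the full Harder--Narasimhan polygon) is known to strictly drop either -- if it did, Langton's proof would be three lines. The actual argument, both in Langton and in the paper, is to rule out an infinite sequence of transforms with stabilized positive $\beta$: one shows (Lemma \ref{lem4.7}) that the destabilizing subsheaves eventually inject into one another with constant rank and slope, assembles them into an inverse system over the infinitesimal neighborhoods of $X_0$, and uses Grothendieck's existence theorem over the formal completion to algebraize the limit into a subsheaf of $\E$ destabilizing the generic fiber -- contradicting semistability of $\E_t$. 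Your ``strictly decreases and is bounded below, so the process terminates'' skips precisely this step, so the existence proof is incomplete where it matters most.

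Both uniqueness claims also have genuine gaps, and in each case the missing ingredient is a separatedness statement that the paper imports from moduli theory rather than proving by hand. For $\Gr^{**}$: a nonzero $\varphi_0:\E_0\to\E'_0$ between semistable sheaves of equal slope only exhibits the Jordan--H\"older factors of $\mathrm{im}\,\varphi_0$ as common factors; it does not identify $\Gr(\ker\varphi_0)^{**}$ with $\Gr(\E'_0/\overline{\mathrm{im}\,\varphi_0})^{**}$. (Compare $S_1\oplus S_2$ with $S_1\oplus S_3$, $S_i$ stable of equal slope and rank, $S_2\not\cong S_3$: a nonzero map with slope-$\mu$ kernel and image exists, yet the gradeds differ -- so any correct proof must use the family, not just the map.) The iteration you allude to (dividing $\varphi$ by $t$ on successive Hecke transforms) only makes the images increase and stabilize; afterwards one is stuck comparing complementary pieces of two different families, and no induction on rank is available because the generic fiber does not split. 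The paper sidesteps this by restricting to a generic flat family of high-degree curves (Lemma \ref{lem5.8}) and invoking separatedness of the relative moduli space of semistable bundles on curves, together with the fact that $\Gr^{**}$ is detected by $S$-equivalence on such curves (Lemma \ref{lem5.9}). For the cycle $\Ccal$: your localized conservation law is false. Local conservation of $c_2$ is exactly what fails in a degeneration -- the defect supported on $\Ccal(\E_0)$ appears at $t=0$ at points where the nearby sheaves $\E_t$ are locally free (this is the bubbling phenomenon), so no slice at the generic point of a single component $Z$ can determine where that mass lands. Global conservation does hold on a compact surface slice, but a complete-intersection surface necessarily meets \emph{every} component of $\Ccal(\E_0)\cup\Ccal(\E'_0)$, so all you extract is the single numerical identity $\deg\Ccal(\E_0)=\deg\Ccal(\E'_0)$, far short of equality of cycles. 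What is needed is that on a \emph{surface} the whole pair $(\Gr^{**},\Ccal)$ -- including the locations and multiplicities of the $0$-cycle -- is a separated invariant of semistable degenerations; this is the content of the Uhlenbeck--Donaldson type GIT construction (Jun Li, Greb--Toma) that the paper sets up in relative form as $\Phi_{\Xcal}:R^{\mu ss}_{\Xcal}\dashrightarrow \Mcal^{\mu ss}_{\Xcal}$ (Proposition \ref{prop4.12}) and applies after restricting to a generic surface family transverse to both cycles (Proposition \ref{prop5.1}). Without that input, the multiplicities $\mult_Z\Ccal(\E_0)$ cannot be pinned down by Chern-class bookkeeping.
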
 

Next, assume that $\{\E_t\}$ are stable vector bundles for $t\neq 0$, and let $A_t$ be the corresponding admissible HYM connections. Known analytic results tell  us that
we can take a gauge theoretical limit for sequences $t_i\to 0$. The limiting data consists of two
parts: a smooth HYM connection $A_\infty$ defined outside some analytic subvariety
$\Sigma$  of $X_0$,  and
the so-called blow-up locus $\Sigma_b=\sum_k m_k^{an} \Sigma_k$, which is an
integral linear combination of pure codimension $2$ subvarieties (see Section \ref{analytic side} for more details). Let $\E_\infty$ be the reflexive sheaf defined by $A_\infty$ over $X_0^{reg}$. 

\begin{thm}\label{Theorem: analytic degeneration}
Fix $\E$ to be any degeneration of $\{\E_t\}_t$ where $\E_t$ are stable vector bundles for $t\neq 0$ and $\E_0$ is torsion-free semistable. Then:
\begin{itemize}
\item[(I)] $\E_\infty$ can be extended to be a semistable reflexive sheaf
    $\overline{\E_\infty}$ over $X_0$. Furthermore, for any local section $s$ of $\overline{\E_\infty}$, $\log^+|s|^2 \in W_{loc}^{1,2} \cap L_{loc}^\infty$;
\item[(II)] $\Gr (\overline{\E_\infty})^{**}=\Gr(\E_0)^{**}$. In
    particular, if $X_0$ is smooth in codimension $2$, then $\overline{\E_\infty}=\Gr(\E_0)^{**}$;
\item[(III)] If $X_0$ is smooth in codimension $2$, then $\mathcal{C}(\E_0)=\Sigma_b$.
\end{itemize}
\end{thm}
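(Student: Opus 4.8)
The plan is to build the limiting sheaf first, establish its analytic properties, and then match it against the algebraic picture furnished by Theorem~\ref{Theorem: algebraic degeneration}, treating the three parts in order.

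\emph{Part (I).} First I would record a uniform energy bound. Since the family $\E\to\Xcal$ is flat, the Chern numbers $\int_{X_t}c_i(\E_t)\wedge\omega_t^{n-i}$ vary continuously in $t$, and because each $A_t$ is HYM (so $\ct F_{A_t}=\lambda_t\,\Id$ with $\lambda_t$ determined by $\slope(\E_t)$ and $\Vol(X_t)$, hence bounded), Chern--Weil expresses $\int_{X_t}|F_{A_t}|^2\,\dvol$ in terms of $\lambda_t$ and these Chern numbers, giving $\sup_t\int_{X_t}|F_{A_t}|^2\,\dvol<\infty$. This is what justifies extracting the Uhlenbeck limit $A_\infty$ on $X_0^{reg}\setminus\Sigma$ together with the codimension-two blow-up locus $\Sigma_b$. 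On $X_0^{reg}\setminus\Sigma$ the smooth HYM connection $A_\infty$ defines a holomorphic bundle $\E_\infty$; by Bando--Siu removable-singularity theory (the blow-up set being analytic of complex codimension $\ge 2$) it extends to a reflexive sheaf on $X_0^{reg}$, and since $X_0$ is normal with $\codim\Sing(X_0)\ge 2$, pushing forward along $X_0^{reg}\hookrightarrow X_0$ produces a reflexive $\overline{\E_\infty}$ on all of $X_0$. The integrability statements are local and follow as in Theorem~\ref{Singular DUY}: for a holomorphic section $s$ the HYM condition yields a differential inequality $\Delta\log|s|^2\ge -C$, and the uniform curvature bound together with a Moser-iteration/mean-value argument across $\Sigma\cup\Sing(X_0)$ gives $\log^+|s|^2\in L^q_{loc}$, $\log^+|s|\in W^{1,2}$, and $|s|\in L^\infty$ for global $s$. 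Finally, semistability of $\overline{\E_\infty}$ is the converse (easy) direction of the Hermitian--Einstein correspondence: on $X_0^{reg}\setminus\Sigma$ any saturated subsheaf is a weakly holomorphic subbundle in the sense of Uhlenbeck--Yau, and the Chern--Weil inequality for $A_\infty$ forces its slope to be $\le\slope(\overline{\E_\infty})$.

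\emph{Part (II).} Here both $\overline{\E_\infty}$ and $\E_0=\E|_{X_0}$ are semistable reflexive sheaves of the same slope $\mu$, and $\lambda_\infty$ is the common Einstein constant. To identify their double-dual graded sheaves I would compare holomorphic structures directly: writing $A_t$ as a unitary connection compatible with $\dbar_{\E}|_{X_t}$, the Uhlenbeck gauge transformations $g_t$ satisfy $g_t(A_t)\to A_\infty$, and the controlled degeneration of the associated complex gauge transformations induces a filtration of $\E_0$ whose graded pieces are the polystable summands cut out by $A_\infty$. To promote this to an isomorphism of graded sheaves I would use the $L^\infty$ bound on sections from Part (I) to pass holomorphic maps to the limit, producing for each stable Jordan--H\"older factor of $\E_0$ a nonzero sheaf map to a stable summand of $\overline{\E_\infty}$ of the same slope; semistability and equality of slopes upgrade such a map to an isomorphism onto a saturated subsheaf, and a rank count matches the two sets of stable factors. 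Alternatively, if one can realize $\overline{\E_\infty}$ as $\Gr$-equivalent to the central fibre of a semistable degeneration of $\{\E_t\}$, the uniqueness clause of Theorem~\ref{Theorem: algebraic degeneration} yields $\Gr(\overline{\E_\infty})^{**}=\Gr(\E_0)^{**}$ at once. When $X_0$ is smooth in codimension two, Theorem~\ref{Singular DUY} applies to $\overline{\E_\infty}$ and forces it to be polystable, so $\overline{\E_\infty}=\Gr(\overline{\E_\infty})^{**}=\Gr(\E_0)^{**}$.

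\emph{Part (III).} Assuming $\E_t,\E_0,X_0$ smooth in codimension two, I would match the two codimension-two cycles through a Chern--Weil transgression. The uniform energy bound lets the forms $\ch_2(A_t)$ converge as currents to $\ch_2(A_\infty)$ plus a part concentrated on $\Sigma_b$; by flatness $\ch_2(\E_t)\to\ch_2(\E_0)$, and Part (II) identifies $\ch_2(A_\infty)$ with $\ch_2(\Gr(\E_0)^{**})$. On the algebraic side the $\ch_2$-defect between $\E_0$ and $\Gr(\E_0)^{**}$ is represented by the codimension-two cycle $\Ccal(\E_0)$, so comparing the two computations gives $\Sigma_b=\Ccal(\E_0)$ as codimension-two cohomology classes. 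Promoting this to an equality of cycles---correct supports \emph{and} integer multiplicities $m_k^{an}$---requires the local energy-quantization and structure theory for the bubbling set, showing that the mass of $\Sigma_b$ along each component equals the length contribution of $\Ccal(\E_0)$ there.

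\emph{Main obstacle.} I expect the crux to be Part (II): transporting holomorphic and stability data across a gauge-theoretic limit taken over a \emph{degenerating} base, where the complex gauge transformations genuinely blow up and the holomorphic structure of $\E_0$ is recovered only in the limit through its Jordan--H\"older filtration. Controlling this degeneration finely enough to pin down the graded sheaf, rather than merely its Chern classes, is the key difficulty, and it is precisely what upgrades Part (III) from a cohomological identity to an equality of cycles with multiplicities.
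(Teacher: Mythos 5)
Your proposal has genuine gaps, and the most serious one is in Part (I), at the extension step. You claim that since $X_0$ is normal with $\codim \Sing(X_0)\geq 2$, pushing $\E_\infty$ forward along $X_0^{reg}\hookrightarrow X_0$ produces a coherent reflexive sheaf $\overline{\E_\infty}$. This is false in the stated generality: coherence of such a pushforward is exactly what can fail when the bad set has codimension two. Siu's extension theorem for reflexive sheaves requires codimension $\geq 3$ (this is how the paper uses it in Proposition \ref{prop3.2}, where $Y$ is assumed smooth in codimension two), whereas in Theorem \ref{Theorem: analytic degeneration} the central fiber $X_0$ is only normal, so $X_0^s$ may have codimension exactly two. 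This extension is the technical heart of the theorem, and the paper's proof is built around it: uniform Sobolev bounds (Simon's estimates for minimal submanifolds of $\P^N$) give $W^{1,2}$ and $L^\infty$ control on holomorphic sections of $\E_i(k)$, whose limits generate image sheaves $\F^k_\infty\subset\E_\infty$; then Proposition \ref{prop3.17} runs an induction on rank, extending the rank-one saturated subsheaf via Remmert--Stein for Weil divisors on the normal $X_0$, and producing the next section by the asymptotic Riemann--Roch formula (Corollary \ref{ARR}) applied to $\E_\infty(k)/\G_{r-1}(k)$. Relatedly, your Moser-iteration route to $\log^+|s|^2\in L^q_{loc}$ for \emph{local} sections does not get off the ground, since a local section of the (as yet unconstructed) extension near $X_0^s$ has no a priori $L^2$ bound to start an iteration; the paper instead uses that $\overline{\E_\infty}/\F^k_\infty$ is torsion to find a holomorphic $P$ with $Ps\in L^2_{loc}$ and estimates $\log^+|s|^2\leq\log^+|Ps|^2+\log^+(1/|P|^2)$ (Proposition \ref{regularity}).

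Parts (II) and (III) also diverge from what is actually needed. For (II), your primary mechanism---tracking the degenerating complex gauge transformations to build maps between Jordan--H\"older factors---is precisely the step you concede is intractable, and the paper avoids it entirely: once $\overline{\E_\infty}$ is known to be semistable, one restricts to a flat family of high-degree curves, where $A_i|_{Y_i}\to A_\infty|_{Y_\infty}$ smoothly forces $\overline{\E_\infty}|_{Y_\infty}$ and $\E_0|_{Y_\infty}$ to be $S$-equivalent (separatedness of the relative moduli space of semistable bundles on curves), and Lemma \ref{lem5.9} converts curve-wise $S$-equivalence into $\Gr(\overline{\E_\infty})^{**}=\Gr(\E_0)^{**}$; your ``alternative'' gestures at this but supplies no mechanism for realizing $\overline{\E_\infty}$ inside the algebraic uniqueness statement. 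For (III), your Chern--Weil comparison only shows that $\Sigma_b$ and $\Ccal(\E_0)$ represent the same class in $H_{2n-4}$; two effective codimension-two cycles can agree in homology without being equal, so this does not prove the asserted equality of cycles. Energy quantization (the singular Bott--Chern formula, Propositions \ref{SingularBottChern} and \ref{BubblingIsAlgebraic}) identifies $\Sigma_b$ with the algebraic cycle $\Ccal(\F_\infty)$ of the Quot-scheme limit $\F_\infty=\cup_k\F^k_\infty$, but the essential remaining step---which your sketch leaves open---is to identify $\Ccal(\F_\infty)$ with $\Ccal(\E_0)$; the paper does this not cohomologically but by restricting to families of smooth surfaces and using the relative moduli construction (Proposition \ref{prop5.1}, Proposition \ref{prop4.12}, Lemma \ref{RestrictionOfQuotScheme}), i.e.\ the fact that points of the moduli space of semistable sheaves on a surface separate precisely the data $(\Gr^{**},\Ccal)$.
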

In particular, this gives the following version of DUY theorem
\begin{cor}[{\sc Singular DUY for degenerations}]
Given $\E$ a semistable degeneration of $\{\E_t\}_t$ where $\E_t$ are stable vector bundles and $\E_0$ is torsion-free semistable. Then there exists an admissible
    HE metric on $\E_0^{**}$.
\end{cor}

\subsubsection{Relation to the Mehta-Ramanathan restriction theorem}
As another  application of the results above, 
in Section \ref{Section: MR theorem}, we 
 study the degeneration of a semistable bundle through
the deformation to projective cones of smooth divisors,  and  we 
show how our results are related to
the MR restriction theorem. In a sense, 
the result obtained here (see Theorem \ref{Theorem: general restriction} and Corollaries \ref{cor5.9})
describes the general picture when the restriction theorem might fail, 
i.e.\ the  nongeneric setting.
Even when the restriction theorem holds for a given stable bundle, it does
not, of course,   apply to the restriction of HYM connections in general. 
Corollary \ref{cor6.10}, however, can be viewed as an analytic version of 
the restriction theorem: the deformation to the  projective cone
 gives a way to interpolate to  the HYM connection of the restriction. 

\begin{rmk}
Another interesting question is whether we can use the HYM connections on the
    projective cones to recover the original HYM connection 
    on $\F$ 
via a limit 
   as the degree increases.
    This is motivated by the observation that  high degree hypersurfaces 
    can be chosen to cover the ambient variety as the degree $\to \infty$. 
If indeed this can be made rigorous it could lead to an analytic proof of the
    Mehta-Ramananthan restriction theorem. We leave this for future study. 
\end{rmk}

\subsection{Sketch of the proofs} \label{sec:sketch}
We first need definitions of Chern classes. 
Since we are working over singular varieties,
 Chern classes are naturally  defined in  corresponding 
homology groups following \cite{BFM:75}. The first Chern class 
can be always defined,  and the second Chern class is defined when the 
base variety is smooth in codimension $2$. 
Corresponding Chern numbers, e.g.\ \emph{slopes}, 
can be computed algebraically. 
The technical difficulty here is to show that the curvature of the
Chern connection for the
admissible Hermitian metric can indeed be used to compute 
Chern numbers as in the Chern-Weil theory of the smooth case. 
This boils down to the fact that the corresponding curvature terms 
indeed define closed currents across the 
singular set, as well as an application of Poincar\'e duality in the ambient smooth manifold (see Proposition \ref{thm:analytic-degree}).
It is here  where
we need to work with normal complex analytic subvarieties of smooth ones with induced metrics.  

Given the above, the idea for the proof of the singular
DUY theorem is to use gauge theoretical methods going back to 
Donaldson \cite{Donaldson:85}, Uhlenbeck-Yau \cite{UhlenbeckYau:86} and Bando-Siu \cite{BandoSiu:94}. We use
Hironaka's theorem on resolution of singularities 
to simultaneously resolve $\F$ and $Y$ and obtain a holomorphic
vector bundle $\widehat{\F}$ on a smooth resolution $\widehat Y\to Y$. 
Since $Y$ is projective, it can be shown quite
easily that $\widehat{\F}$ is stable with respect to a  small perturbation 
$\omega_\epsilon$ of the pullback of $\omega$. 
In particular, by the DUY theorem 
we have a family of HYM  connections 
$A_\epsilon$ on $\widehat{\F}$.  After passing to a subsequence we may find an Uhlenbeck limit $A_\infty$
of $A_\epsilon$ over the regular part $Y^{reg}$ of $Y$. Then one can show that $A_\infty$ defines a reflexive sheaf isomorphic to the original $\F$, so the conclusion follows. This is due to a  theorem of Siu and a use of MR restriction theorem. 

The proof of the existence of a semistable degeneration of semistable bundles is a slight generalization of Langton's results (\cite{Langton:75}) by following the argument in \cite{HuybrechtsLehn:10}. Now we explain about the uniqueness. For two different semistable degenerations, by
restricting to a flat family of high degree curves, one can get the
uniqueness of $\Gr^{**}$. For the blow-up locus, we need to assume $X_0$
is smooth in codimension $2$ so that we can restrict our family to 
a family of high degree smooth surfaces,
and thus get uniqueness of the
algebraic blow-up locus $\mathcal{C}$. This relies crucially on the known
results for the  compactification of semistable torsion-free sheaves over smooth
projective surfaces.

Assume the family above consists of stable torsion-free sheaves
$\Ecal_t$. 
We can take a gauge theoretical limit of the corresponding HYM
connections $A_t$ and get an admissible HYM connection $A_\infty$ and the blow-up 
locus $\Sigma_b$ which is an integer linear combination of pure codimension
$2$ subvarieties of $X_0$. Since in this case, we deal with normal projective varieties in general, the fact that $A_\infty$ defines a semistable reflexive sheaf over $X_0$ is very technical. Assuming this, since we are working in the projective case, we can realize $\E_\infty$ as the double dual of some sheaf represented by an element in a certain Quot scheme which naturally carries the information of the blow-up locus. This transforms our problem into a similar problem which can be dealt with as the uniqueness properties of the semistable degenerations. 

Let $\E_\infty$ be the reflexive sheaf $A_\infty$ defines over $X_0^{reg}$. Now we first explain why $\E_\infty$ can be extended to be a reflexive sheaf over $X_0$.  To solve these technical issues, the projectivity plays a key role 
in that we can construct a
lot of sections of the twisted bundle $\E_\infty(k)$ for $k$ fixed but large. 
Indeed, twisting by the pullback of  the positive line bundle on $Y$,
we know that the bundles $\E_t(k)$ have  many sections. 
We wish to take limits  to produce nontrivial
sections of $\E_\infty(k)$. Using the HYM condition, 
the construction proceeds once one proves that the Sobolev constant of 
$(X_t,\omega_t)$ has a uniform bound which follows from a result of Leon Simon.

Given the above, we want to show that $\E_\infty$ can be extended as  a 
coherent reflexive sheaf over $X_0$. This follows from an induction argument
(cf.\ \cite{Serre:66}).
We start with one limiting section
$s$: it will generate a rank $1$ saturated subsheaf $\mathcal{L}$ of
$\E_\infty$. Furthermore, by the Remmert-Stein theorem
it can be extended to a rank $1$ reflexive sheaf over $Y$. 
Now by increasing $k$, we can find another section that generates a higher 
rank coherent subsheaf of $\E_\infty$,  since by the Riemann-Roch theorem
$\mathcal{L}(k)$ cannot contain all the limiting sections for $k$ large. 
This now gives us a rank $2$ saturated coherent subsheaf of $\E_\infty$
which can also be extended as a coherent sheaf over $X_0$. 
Continuing in this way, the result is obtained. Furthermore, one can show
the limiting sections generate $\E_\infty$ outside some codimension $2$ subvariety. Thus the extension follows, the fact that $\overline{\E_\infty}$ is semistable again follows from certain regularity results, which enable us to do integration by parts.

\subsection*{Acknowledgments} The authors would like to thank Song Sun for helpful comments and suggestions.
They also thank an anonymous reader for pointing out an error in a previous
version of this paper, as well as the referee for making numerous
suggestions that improved the exposition.

\section{Singular DUY Theorem}

\subsection{Admissible HYM connections}
Let $(Y,\omega)$ be a normal subvariety of $\P^N$ with $\omega=\Omega_{FS}|_{Y^{reg}}$. Here we write 
$
Y=Y^{reg} \cup Y^{s}
$,
where $Y^{reg}$ (resp.\ $Y^s$) denotes the smooth (resp.\ singular) locus
of $Y$.
Following \cite{BandoSiu:94}, we will use the following definition.
\begin{defi} \label{def:admissible}
Let $\Fcal\to Y^{reg}\setminus \Sigma$ be a holomorphic vector bundle,
    where $\Sigma$ is 
    a closed set of $Y^{reg}$ of Hausdorff real codimension $\geq 4$. 
     A  Hermitian metric $h$ on $\Fcal$ is called
    \emph{admissible} if 
\begin{itemize}
    \item $\sqrt{-1} \Lambda_{\omega} F_{h}$ is bounded;
\item $\int_{Y} |F_{h}|^2<\infty$, \footnote{The integration 
    is understood  to be taken over the smooth locus  $Y^{reg}$.}
\end{itemize}
    where $F_{h}$ denotes the curvature of the  Chern connection
    of the pair $(\F, h)$ defined on the smooth locus of $\Fcal\to
    Y^{reg}$.
    If furthermore,  
    \begin{equation} \label{eqn:hym}
        \sqrt{-1}\Lambda_{\omega}F_{h}=\mu \Id
    \end{equation}
for some constant $\mu$, then $h$ is called a HE metric and
    the connection $A$ is called an admissible HYM connection. 
\end{defi}

\subsection{Chern classes} \label{sec:chern}
We need definitions of Chern classes of coherent sheaves on singular
varieties. It seems there is not a standard definition, 
but we are only
interested in $\ch_1$ and $\ch_2$, and for these different methods likely
give the same answer with our assumptions. 
Most importantly, we wish to compare these Chern classes with the currents
defined by admissible metrics. 
It will be useful to use the approach of Baum-Fulton-MacPherson
\cite{BFM:75}. 
Recall the notation of the previous section, although here we choose an
arbitrary embedding $Y\subset M$, where $M$ is smooth and projective. 
As will be clear in  the following, we will  define the first Chern class  in general 
and the   second Chern class only when $\dim Y^s \leq n-3$. 
(Co)homology groups will be taken with complex coefficients. 
Let $\F\to Y$ be a coherent analytic sheaf.  
Choose a locally free resolution on $M\supset Y$:
$$
0\lra\F_\ell\lra\cdots\lra\F_0\lra \jmath_\ast\F\lra 0\ ,
$$
and denote the complex of $\F_i$'s by $\F_\bullet$.
Let
$$
\ch^\ast(\Fcal_\bullet):=\sum_{i=0}^\ell (-1)^i\ch^\ast(\Fcal_i)\ ,
$$
where $\ch^\ast(\Fcal_i)\in H^\ast(M)$ is the Chern character. 
Since $\jmath_\ast \Fcal$ is supported on $Y$, $\ch^\ast(\Fcal_\bullet)$
defines a class in $H^\ast(M,M\setminus Y)$ (cf.\ \cite[p.\ 285]{Suwa:00}).
We set: $$\ch^M_Y(\F):=A(\ch^\ast(\F_\bullet))\in H_\ast(Y)\ .$$ 
Here, $A: H^\ast(M,M\setminus Y)\to H_\ast(Y)$ is Alexander duality.
Define:
\begin{equation} \label{eqn:tau}
    \tau(\F)=\Td(M)\cap \ch^M_Y(\F)\ .
\end{equation}
Then $\tau(\F)$ is independent of the embedding $Y\subset M$. 

In the case where $\F$ is the restriction of a vector bundle on $M$, 
then $\tau(\F)=\ch^\ast(\F)\cap \tau(Y)$, where
$\tau(Y):=\tau(\Ocal_Y)$ (\cite[p.\ 116 (4)]{BFM:75}). 
Note that $\tau_n(Y)=[Y]$, the fundamental class of $Y$  (\cite[p.\
129]{BFM:75}).
Motivated by this, we make the following

\begin{defi} \label{def:chern}
    Chern classes $\ch_i(\Fcal)\in H_{2n-2i}(Y)$, $i=0,1,2$,  are defined by
\begin{align}
    \begin{split} \label{eqn:chern-classes}
        \ch_0(\F)&=\tau_n(\F)=\rank(\F)[Y]\ , \\
        \ch_1(\F) &= \tau_{n-1}(\F)-\rank(\F)\cdot \tau_{n-1}(Y)\ . \\
        \ch_2(\F)&=\tau_{n-2}(\F)-\rank(\F)\cdot
    \tau_{n-2}(Y)-\ch_1(\F)\cdot\tau_{n-1}(Y)\ .
    \end{split}
\end{align}
\end{defi}
Let us explain the last term on the right hand side of the expression for
    $\ch_2(\Fcal)$. Since $\codim(Y^s)\geq 3$, 
    $$
    H_{2n-2i}(Y)\simeq H^{\BM}_{2n-2i}(Y^{reg})\ ,\ i=0,1,2\ ,
    $$
    where $H^{\BM}_\ast(Y^{reg})$ denotes the Borel-Moore, or locally
    finite, homology.  Using Poincar\'e duality $H^{\BM}_p(Y^{reg})\simeq
    H^{2n-p}(Y^{reg})$ and the cap product:
    $$
    H^k(Y^{reg})\otimes H_p^{\BM}(Y^{reg})\lra H_{p-k}^{\BM}(Y^{reg})
    $$
    gives  well-defined intersection pairings
    $$
    \ch_1(\Fcal)\cdot\tau_{n-1}(Y)\ , \ \ch_1(\Fcal)\cdot \ch_1(\Fcal) \in
    H_{2n-4}(Y)\ .
    $$
    Another consequence of the assumption $\codim(Y^s)\geq 3$ means we have
    a homomorphism
    \begin{equation} \label{eqn:duality}
        H_{2n-2i}(Y)\simeq H_{2n-2i}^{\BM}(Y^{reg})\lra \left[
            H_c^{2n-2i}(Y^{reg})\right]^\ast\ , \ i=0,1,2\ .
    \end{equation}

If $h$ is an admissible metric
on a reflexive coherent sheaf $\F$
in the sense of Definition \ref{def:admissible}, 
then the Chern-Weil forms $\ch_1(\F,h)$ and $\ch_2(\F,h)$
for the Chern connection of $(\Fcal,h)$
also give elements of $[H^{2n-2i}_c(Y^{reg})]^\ast$ via integration. 
We wish to prove:

\begin{prop} \label{prop:chern}
    For $i=0, 1,2$,
    the image of the classes $\ch_i(\Fcal)$ under the map
    \eqref{eqn:duality} coincide with 
    the classes of $\ch_i(\F,h)$.
\end{prop}

Let $\widehat M\supset\widehat Y\stackrel{\pi}{\lra} Y$ be an embedded
resolution of singularities of $Y$,
$\hat\jmath: \widehat Y\hookrightarrow\widehat M$, and let $\widehat\F=\pi^\ast\F$. 
We furthermore let $\widetilde \F_\bullet=\pi_!(\widehat \F)$. 
Since: 
$$
0\lra \F\lra\pi_\ast(\widehat\F)\lra T\lra 0\ ,
$$
where $\supp(T)\subset Y^s$, it follows that
$\tau_i(\F)=\tau_i(\widetilde\F_\bullet)$ for $i\geq n-2$
if $Y^s$ has codimension $\geq 3$. 
Moreover, by the naturality of the homological Todd class we have:
$\tau(\widetilde\F_\bullet)=\pi_\ast\tau(\widehat\F)$. 
Hence, 

\begin{equation} \label{eqn:tau-updown}
    \tau_i(\F)=\pi_\ast \tau_i(\widehat\F)\ ,\ i\geq n-2\ .
\end{equation}

Let $\widehat \F_\bullet$ be a locally free resolution of $\widehat
\F$ on the smooth variety $\widehat Y$.  Let $\widehat\rho:\widehat U\to
\widehat Y$ be a holomorphic retraction of an open neighborhood $\widehat
U\subset\widehat M$ to $\widehat Y$. 

\begin{lem} \label{lem:tau}
    $\displaystyle
    \tau(\widehat\F)=\ch^\ast(\widehat\rho^\ast\widehat\F_\bullet)\cap
    \tau(\widehat Y)$.
\end{lem}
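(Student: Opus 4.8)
The plan is to reduce the statement to the projection (module) property of the Baum--Fulton--MacPherson transformation, whose special case is already recorded in Section~\ref{sec:chern}: namely $\tau(\F)=\ch^\ast(\F)\cap\tau(Y)$ when $\F$ is the restriction of a vector bundle on the ambient manifold. Since $\widehat Y$ is smooth, the coherent sheaf $\widehat\F$ admits the finite locally free resolution $\widehat\F_\bullet$, and the first step is to observe that $\tau$ depends only on the class $[\widehat\F]=\sum_i(-1)^i[\widehat\F_i]$ in $K_0(\widehat Y)$. This independence of the chosen resolution is part of the BFM formalism, and it yields the additivity $\tau(\widehat\F)=\sum_i(-1)^i\tau(\widehat\F_i)$, reducing the problem to the case of a single vector bundle $\widehat\F_i$ on $\widehat Y$.

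The second step is to realize each intrinsic bundle $\widehat\F_i$ on $\widehat Y$ as the restriction of an ambient bundle, and this is precisely where the holomorphic retraction $\widehat\rho\colon\widehat U\to\widehat Y$ enters. The pullback $\widehat\rho^\ast\widehat\F_i$ is a holomorphic vector bundle on the neighborhood $\widehat U\subset\widehat M$ whose restriction to $\widehat Y$ is $\widehat\F_i$, because $\widehat\rho\circ\hat\jmath=\id_{\widehat Y}$. Applying the special formula to $\widehat\F_i=(\widehat\rho^\ast\widehat\F_i)|_{\widehat Y}$ gives $\tau(\widehat\F_i)=\ch^\ast(\widehat\rho^\ast\widehat\F_i)\cap\tau(\widehat Y)$, where the cap product is the module action $H^\ast(\widehat U)\otimes H_\ast(\widehat Y)\to H_\ast(\widehat Y)$ coming from restriction along $\hat\jmath$; pulling $\ch^\ast(\widehat\rho^\ast\widehat\F_i)$ back to $\widehat Y$ returns $\ch^\ast(\widehat\F_i)$, so the cap product only feels the restriction of the Chern character to $\widehat Y$. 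Summing over $i$ and using $\ch^\ast(\widehat\rho^\ast\widehat\F_\bullet)=\sum_i(-1)^i\ch^\ast(\widehat\rho^\ast\widehat\F_i)$ then produces the asserted identity.

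I expect the only point requiring real care to be the application of the special formula to a bundle that is a priori defined only on the neighborhood $\widehat U$ rather than on all of $\widehat M$. The clean way around this is to note that every ingredient in the construction of $\tau$ --- the ambient locally free resolution of $\hat\jmath_\ast\widehat\F$, the relative Chern character in $H^\ast(\widehat M,\widehat M\setminus\widehat Y)$, the Alexander duality map $A$, and the cap product with $\Td(\widehat M)$ --- is supported on an arbitrarily small neighborhood of $\widehat Y$, so one may work with $\widehat U$ in place of $\widehat M$ without altering the resulting classes in $H_\ast(\widehat Y)$. To make this fully explicit, and to see directly why the Todd factor reassembles correctly, one builds an ambient resolution of $\hat\jmath_\ast\widehat\F$ on $\widehat U$ as a tensor product $\widehat\rho^\ast\widehat\F_\bullet\otimes K^\bullet$, where $K^\bullet$ is a Koszul-type locally free resolution of $\Ocal_{\widehat Y}$ associated to the regular embedding $\hat\jmath$ and its normal bundle $N$. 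The multiplicativity $\ch^\ast(\widehat\rho^\ast\widehat\F_\bullet\otimes K^\bullet)=\ch^\ast(\widehat\rho^\ast\widehat\F_\bullet)\cdot\ch^\ast(K^\bullet)$ factors the computation, and $\Td(\widehat M)\cap A(\ch^\ast(K^\bullet))$ is exactly $\tau(\Ocal_{\widehat Y})=\tau(\widehat Y)$. This tensor-product construction together with the Todd bookkeeping is the only genuinely technical part; everything else is the formal additivity and naturality of the BFM transformation.
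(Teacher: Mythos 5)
Your argument is correct, and its primary route is genuinely different from the paper's. The paper's proof is a short computation carried out entirely at the level of the localized Chern character: by definition $\tau(\widehat\F)=\Td(\widehat M)\cap\ch_{\widehat Y}^{\widehat M}(\widehat\F)$; the key input, cited from Suwa, is the factorization $\ch_{\widehat Y}^{\widehat M}(\widehat\F)=A\bigl(\ch^\ast(\widehat\rho^\ast\widehat\F_\bullet)\cup\ch^\ast(\hat\jmath_\ast\Ocal_{\widehat Y})\bigr)$; and then the module property of Alexander duality pulls $\ch^\ast(\widehat\rho^\ast\widehat\F_\bullet)$ outside of $A$, leaving exactly $\Td(\widehat M)\cap\ch_{\widehat Y}^{\widehat M}(\Ocal_{\widehat Y})=\tau(\widehat Y)$. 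You instead reduce, by additivity of $\tau$ on $K_0(\widehat Y)$, to a single bundle $\widehat\F_i$, and then invoke the module property $\tau(\F)=\ch^\ast(\F)\cap\tau(Y)$ for restrictions of ambient bundles (the BFM fact the paper records just before the lemma), replacing the ambient space $\widehat M$ by $\widehat U$, where $\widehat\rho^\ast\widehat\F_i$ actually lives. That substitution is the one point needing justification, and you handle it correctly: excision on $H^\ast(\widehat M,\widehat M\setminus\widehat Y)$, or equivalently the independence of $\tau$ from the choice of embedding stated in the paper, makes $\widehat Y\hookrightarrow\widehat U$ a legitimate ambient embedding. What your route buys is modularity: the Suwa multiplicativity statement is traded for two standard pieces of the BFM formalism (additivity and the module property). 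In fact, since $\widehat Y$ is smooth, you could streamline further by taking the identity embedding $\widehat Y\hookrightarrow\widehat Y$, for which $A$ is Poincar\'e duality and the localized Chern character is literally $\ch^\ast(\widehat\F_\bullet)$; the lemma then drops out with no module property at all, because the cap product with $\tau(\widehat Y)$ only sees $\ch^\ast(\widehat\rho^\ast\widehat\F_\bullet)|_{\widehat Y}=\ch^\ast(\widehat\F_\bullet)$. One caveat on your final paragraph, which is essentially a sketch of the content of the Suwa citation: the global Koszul resolution $K^\bullet$ of $\Ocal_{\widehat Y}$ on $\widehat U$ that you posit requires a holomorphic tubular-neighborhood structure (a section of the pulled-back normal bundle vanishing transversally exactly on $\widehat Y$), which is strictly more than the bare retraction $\widehat\rho$ the paper supplies; since your main argument never uses that paragraph, this is a presentational caveat rather than a gap, and it is precisely the technical point the paper outsources to the citation.
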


\begin{proof}
    Since $\widehat Y$ is smooth, we have:
    \begin{align*}
        \tau(\widehat \F)&=\Td(\widehat M)\cap \ch_{\widehat
        Y}^{\widehat M}(\widehat \F) \\
        &=\Td(\widehat M)\cap
        A(\ch^\ast(\widehat\rho^\ast(\widehat\F_\bullet)\cup
        \ch^{\ast}(\hat\jmath_\ast\Ocal_{\widehat Y})) \\
        &=\Td(\widehat M)\cap
        \ch^\ast(\widehat\rho^\ast(\widehat\F_\bullet)\cap
        A(\ch^{\ast}(\hat\jmath_\ast\Ocal_{\widehat Y})) \\
        &=\ch^\ast(\widehat\rho^\ast(\widehat\F_\bullet))\cap
        (\Td(\widehat M)\cap
        \ch_{\widehat Y}^{\widehat M}(\Ocal_{\widehat Y})) \\
        &=\ch^\ast(\widehat\rho^\ast(\widehat\F_\bullet))\cap
        \tau(\widehat Y) \ .
    \end{align*}
    The first to second (resp.\ second to third) lines is in \cite[p.\ 291
    (resp.\ p.\ 288)]{Suwa:00}.
\end{proof}

\begin{proof}[Proof of Proposition \ref{prop:chern}]
    Let $\Omega$ be a smooth, compactly supported, closed form on $Y^{reg}$
    of degree $2n-2i$, $i=0,1,2$, representing a class in
    $H^{2n-2i}_c(Y^{reg})$.  
We may assume that $\supp\Omega$ is contained in the smooth locus of
    $\F$. Choose connections $A_\bullet$ on the bundles in the
    resolution $\widehat \rho^\ast(\widehat \F_\bullet)$, and let 
    $\ch^\ast(\widehat\rho^\ast\widehat\F_\bullet, A_\bullet)$ denote
    the alternating sums of Chern-Weil forms. Then on the support of $\Omega$ (or
    $\pi^\ast\Omega$), 
    $\ch^{n-i}(\widehat\rho^\ast\widehat\F_\bullet, A_\bullet)$ and
    $\ch_{n-i}(\F,h)$ differ by a smooth exact form.
    We will denote the images of classes by the map \eqref{eqn:duality} with the same
    notation.
    Now, on the one hand, from \eqref{eqn:tau-updown}, we have
    $$
     \tau_i(\F)([\Omega])=
    \pi_\ast\tau_i(\widehat\F)([\Omega])=
    \tau_i(\widehat\F)(\pi^\ast[\Omega])=\pi^\ast[\Omega]\cap \tau_i(\widehat\F)\ .
    $$
    On the other hand, from Lemma \ref{lem:tau},
    \begin{align*}
        \pi^\ast[\Omega]\cap \tau_i(\widehat\F)&=\pi^\ast[\Omega]\cap\left(
\ch^\ast(\widehat\rho^\ast\widehat\F_\bullet)\cap
        \tau(\widehat Y)\right)_i \\
        &=\sum_{j=i}^{n}
        \left(\pi^\ast[\Omega]\cup
        \ch^{j-i}(\widehat\rho^\ast\widehat\F_\bullet)\right)\cap
        \tau_j(\widehat Y) \\
        &=\sum_{j=i}^{n}
        \left[\pi^\ast\Omega\wedge
        \ch^{j-i}(\widehat\rho^\ast\widehat\F_\bullet, A_\bullet)\right]\cap
        \tau_j(\widehat Y) \ .
    \end{align*}
    Since $\tau_n(\widehat Y)=[\widehat Y]$, 
    we have for each case:
    \begin{align*}
        \tau_n(\F)([\Omega])&= \rank(\Fcal)\int_Y \Omega \\
        \tau_{n-1}(\F)([\Omega])&=
        \rank(\Fcal)\tau_{n-1}(Y)([\Omega])
        +\int_Y \Omega\wedge \ch_1(\Fcal, h)
        \\
        \tau_{n-2}(\F)([\Omega])&=\rank(\Fcal)\tau_{n-2}(Y)([\Omega])
        +\tau_{n-1}(Y)([\Omega\wedge \ch_1(\Fcal, h)]) \\
        &\qquad +
        \int_Y \Omega\wedge \ch_2(\Fcal, h)
    \end{align*}
    Comparing with Definition \ref{def:chern}, 
    this completes the proof.
\end{proof}


We also note the following.

\begin{lem} \label{lem:simpson}
    The forms $\ch_1(\F,h)$, $\ch_1^2(\F,h)$, and $\ch_2(\F,h)$
    define closed currents on $Y$ as elements  
    of the dual space of the restriction to $Y$ of smooth forms on $M$.
\end{lem}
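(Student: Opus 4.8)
The plan is to first verify that each of the three forms is globally integrable on $Y^{reg}$, so that it defines a current by integration, and then to establish closedness by removing the singular set. Write $B=Y^s\cup\Sing(\F)$ for the locus off which $(\F,H)$ is a smooth Hermitian bundle. Since $\F$ is reflexive its singular set has complex codimension $\ge 3$, and since $Y$ is smooth in codimension two the same holds for $Y^s$; thus $B$ has complex codimension $\ge 3$, i.e.\ real Hausdorff codimension $\ge 6$. By admissibility $\int_{Y^{reg}}|F_H|^2<\infty$, so $\ch_1(\F,H)$ (a constant times $\tr F_H$) lies in $L^2$, while $\ch_2(\F,H)$ and $\ch_1^2(\F,H)$, being pointwise bounded by $C|F_H|^2$, lie in $L^1(Y^{reg})$. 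Each form $\alpha$ then defines a current $T_\alpha$ on $Y$ via $\phi\mapsto\int_{Y^{reg}}\alpha\wedge\phi$, the integral converging because $\phi$, being the restriction of a smooth form on the compact manifold $M$, is bounded.

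For closedness I would work on $Y^{reg}$ (treating $Y^s$ as described below) and invoke the Bianchi identity: on $Y^{reg}\setminus\Sing(\F)$ the Chern--Weil forms are smooth and $d$-closed, so the current $dT_\alpha$ is supported on $B$. Choosing cutoff functions $\chi_\epsilon$ equal to $1$ away from an $\epsilon$-neighborhood of $B$ and vanishing near $B$, and noting that since $B$ has real codimension $\ge 4$ a logarithmic cutoff can be arranged with $\|d\chi_\epsilon\|_{L^2}\to 0$, an integration by parts against a test form $\phi$ together with $d\alpha=0$ on the smooth locus reduces the problem to showing
\[
\lim_{\epsilon\to 0}\int_{Y^{reg}} d\chi_\epsilon\wedge\alpha\wedge\phi=0 .
\]

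For $\alpha=\ch_1(\F,H)$ this is immediate: by Cauchy--Schwarz the integral is bounded by $C\|\phi\|_\infty\|d\chi_\epsilon\|_{L^2}\|\tr F_H\|_{L^2}\to 0$, since $\tr F_H\in L^2$. For $\alpha=\ch_2(\F,H)$ or $\ch_1^2(\F,H)$ the same estimate fails, because these forms lie only in $L^1$ and a Cauchy--Schwarz bound would require $F_H\in L^4$, which is not available; the annular integral $\int|d\chi_\epsilon|\,|F_H|^2$ cannot be controlled directly. Here I would instead appeal to a removable-singularity theorem for the operator $d$ (of Harvey--Polking type; equivalently Federer's support theorem for flat currents): $dT_\alpha$ is a current of real dimension $2n-5$ supported on $B$, whose Hausdorff dimension is at most $2n-6<2n-5$, and such a current must vanish. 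Concretely, the codimension-three hypothesis rules out any concentration of $\tr(F_H\wedge F_H)$ along $B$ --- such a concentration would be a nonzero closed current of bidimension $(n-2,n-2)$, hence of real dimension $2n-4$, carried by a set of real dimension $\le 2n-6$, which is impossible --- so the smooth Chern--Weil form is already closed as a current.

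The step I expect to be the main obstacle is precisely this passage for the degree-four forms: verifying that $dT_\alpha$ falls within the scope of the support/removable-singularity theorem (in particular that the trivial extension across $B$ introduces no residual boundary term), and doing so \emph{uniformly across both} $\Sing(\F)$ and $Y^s$. For the former one argues on the complex manifold $Y^{reg}$; for the latter, since $Y^s$ lies on the boundary of $Y^{reg}$, I would regard $T_\alpha$ as a current on the ambient smooth $M$ supported on $Y$ (the test forms already come from $M$) and apply the removable-singularity statement there, using once more that $Y^s$ has complex codimension $\ge 3$. It is exactly at this point that the two standing hypotheses --- reflexivity of $\F$ and smoothness of $Y$ in codimension two --- become indispensable, since they are what force $B$ into codimension $\ge 3$ and thereby eliminate the codimension-two bubbling phenomenon that would otherwise obstruct closedness of $\ch_2(\F,H)$.
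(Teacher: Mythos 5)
Your proof is correct, but it takes a genuinely different route from the paper. The paper's own proof is essentially a two-line citation: integrability of the Chern--Weil integrands follows from $F_H\in L^2$ (your first step, identically), and closedness is asserted to ``follow exactly as in'' Simpson's cutoff/integration-by-parts argument for metrics with controlled curvature. You instead give a self-contained argument: the cutoff-plus-Cauchy--Schwarz computation for $\ch_1(\F,H)$, and then, for the quadratic forms $\ch_2(\F,H)$ and $\ch_1^2(\F,H)$, the observation that $T_\alpha$ has locally finite mass (hence is a flat current), so $dT_\alpha$ is a flat current of dimension $2n-5$ supported on $B=Y^s\cup\Sing(\F)$, a set of Hausdorff dimension at most $2n-6$; Federer's flat support theorem (equivalently Harvey--Polking removability for $d$ on $L^1_{loc}$ forms) then forces $dT_\alpha=0$. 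This is a valid argument, and two of its features are worth noting. First, you correctly identified the genuine subtlety that the paper's brevity hides: with only $F_H\in L^2$, no naive cutoff estimate controls $\int |d\chi_\epsilon|\,|F_H|^2$, so the degree-four case really does require an input beyond Cauchy--Schwarz; the flatness of $dT_\alpha$ (coming from finite mass of $T_\alpha$, since $d$ preserves flat currents) is precisely what places it within the scope of the support theorem, and you should state that implication explicitly rather than leave it inside the phrase ``falls within the scope.'' Second, working with the pushforward current on the ambient smooth $M$ handles $\Sing(\F)$ and $Y^s$ simultaneously and matches exactly how the lemma defines its currents (as functionals on restrictions of forms from $M$), which is arguably cleaner than transferring Simpson's argument, formulated on manifolds, across the non-manifold points of $Y$. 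What the paper's route buys is brevity and no GMT machinery; what yours buys is a complete, checkable proof that isolates exactly where the codimension-$\geq 3$ hypotheses on $Y^s$ and $\Sing(\F)$ enter. Your parenthetical about a ``concentration'' being a closed current of bidimension $(n-2,n-2)$ is slightly muddled dimensionally (the relevant current $dT_\alpha$ has dimension $2n-5$, not $2n-4$), but it is a side remark and does not affect the main argument.
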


\begin{proof}
    Since the curvature is in $L^2$, the integrals are well defined. The
    fact that the currents are closed follows exactly as in \cite[Sec.\
    6.5.3]{Simpson:87}. 
\end{proof}

To define (semi)stability of $\F$ we need a notion of degree.
Using Lemma \ref{lem:simpson}, this can be defined analytically in the usual way.
In the
projective case, let
$[\alpha]\in A_1(Y)$ be the cycle defined by intersecting hyperplanes in
$M\subset\PBbb^N$. 
By choosing a representative in general position, $[\alpha]$ uniquely
defines a homology class in $H_2(Y^{reg})\simeq H^{2n-2}_c(Y^{reg})$. 
Via \eqref{eqn:duality}, 
 we define:
\begin{equation} \label{eqn:degree}
    \deg_\alpha(\F):=\ch_1(\F)\cdot[\alpha] \in \QBbb \ .
\end{equation}
Similarly, we define the slope of $\F$ to be
$\mu_\alpha(\F)=\deg_\alpha(\F)/\rank(\F)$.
Then a reflexive sheaf $\Fcal$ 
is \emph{stable} (resp.\ \emph{semistable}) 
if $\mu_\alpha(\Scal)<\mu_\alpha(\Fcal)$ (resp.\ $\leq$)
for all coherent subsheaves $\Scal\subset\Fcal$ with
$0<\rank\Scal<\rank\Fcal$.

\begin{prop} \label{thm:analytic-degree}
    Let $\omega=\left.\omega_{FS}\right|_{Y^{reg}}$. 
The following holds:
$$
    \deg_\alpha(\F)=\int_Y \ch_1(\F,h)\wedge \omega^{n-1}
    \ .
$$
\end{prop}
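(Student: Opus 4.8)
The plan is to reduce the identity to a pairing of homology and cohomology classes on the smooth ambient space $M=\P^N$, and to evaluate that pairing by restricting to a generic curve. Throughout I write $\Omega_{FS}$ for the Fubini--Study form on $\P^N$, so that $\omega=\Omega_{FS}|_{Y^{reg}}$ and $[\Omega_{FS}]=h$ is the hyperplane class. First I would choose the $1$-cycle $[\alpha]$ carefully: cutting $Y$ with $n-1$ generic hyperplanes of $\P^N$ produces a curve $\alpha$, and since $Y$ is normal (so $\dim Y^s\le n-2$) while $\F$ is reflexive (so $\dim\Sing(\F)\le n-3$), a generic such $\alpha$ is a smooth curve contained in $Y^{reg}\setminus\Sing(\F)$, where $\F$ is locally free and $H$ is a smooth Hermitian metric. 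On this curve ordinary Chern--Weil theory applies and gives $\int_\alpha \ch_1(\F,H)=\deg(\F|_\alpha)=\ch_1(\F)\cdot[\alpha]=\deg_\alpha(\F)$. It therefore suffices to prove the purely analytic identity $\int_Y \ch_1(\F,H)\wedge\omega^{n-1}=\int_\alpha \ch_1(\F,H)$.

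Next I would pass to the compact ambient manifold $M$. By Lemma \ref{lem:simpson}, $\ch_1(\F,H)$ defines a closed current $T$ on $M$, of dimension $2n-2$ and supported on $Y$; the left-hand integral converges because the curvature is $L^2$ on the finite-volume space $Y$ and $\omega$ is bounded. Being closed on the compact $M$, $T$ has a well-defined class $[T]\in H_{2n-2}(M;\R)$, and the pairing $T(\beta)$ against a smooth closed form $\beta$ of degree $2n-2$ depends only on $[\beta]\in H^{2n-2}(M;\R)$. Since $[\Omega_{FS}^{n-1}]=h^{n-1}$ is Poincar\'e dual in $M$ to the cycle $[\alpha]$, I would conclude
\[
\int_Y \ch_1(\F,H)\wedge\omega^{n-1}=T(\Omega_{FS}^{n-1})=\langle [T],h^{n-1}\rangle=[T]\cdot[\alpha].
\]

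Finally, because $\alpha$ lies in the open locus where $T$ is represented by the genuinely smooth closed form $\ch_1(\F,H)$, this intersection number equals $\int_\alpha \ch_1(\F,H)$; alternatively, Proposition \ref{prop:chern} identifies $[T]$ with the pushforward of the algebraic class $\ch_1(\F)$, so that $[T]\cdot[\alpha]=\ch_1(\F)\cdot[\alpha]=\deg_\alpha(\F)$ directly. Combining this with the first paragraph completes the argument.

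The main obstacle is entirely concentrated in the behavior across the singular set $Y^s$. A priori $\ch_1(\F,H)$ is only a smooth form on $Y^{reg}\setminus\Sing(\F)$, and $\omega^{n-1}$ is not compactly supported there, so naive Poincar\'e duality on the open manifold $Y^{reg}$ does not apply and boundary contributions near $Y^s$ would have to be controlled. This is precisely what is circumvented by Lemma \ref{lem:simpson}, which guarantees that $\ch_1(\F,H)$ extends to a \emph{closed} current across $Y^s$ when viewed in $M$, and by Proposition \ref{prop:chern}, which guarantees that this current represents the algebraically defined class $\ch_1(\F)$. Granting these two inputs, the only remaining work is routine: verifying that the generic hyperplane section $\alpha$ can indeed be taken inside $Y^{reg}\setminus\Sing(\F)$ using the codimension bounds, and checking that the topological pairing computed in the smooth $M$ agrees with the algebraic intersection $\ch_1(\F)\cdot[\alpha]$ on $Y$.
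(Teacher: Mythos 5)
Your overall plan---reduce to the identity $\int_Y\ch_1(\F,H)\wedge\omega^{n-1}=\int_\alpha\ch_1(\F,H)$ for a generic complete intersection curve $\alpha\subset Y^{reg}\setminus\Sing(\F)$, then exploit that $\ch_1(\F,H)$ defines a \emph{closed} current $T$ on $M$ (Lemma \ref{lem:simpson}) so that its pairing with a closed form depends only on the cohomology class---is exactly the paper's strategy. However, the execution of the central step contains a genuine error. In $M=\P^N$ the class $h^{n-1}$ is \emph{not} Poincar\'e dual to the curve $\alpha$: the dual of $[\alpha]$ is $\deg(\alpha)\,h^{N-1}$, while $h^{n-1}$ is dual to a linear subspace $P\cong\P^{N-n+1}$ of complex codimension $n-1$ (one with $P\cap Y=\alpha$). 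Consequently the quantity $[T]\cdot[\alpha]$ you introduce is not a well-defined intersection number in $M$: the dimensions $(2n-2)+2$ do not add up to $2N$ unless $Y=M$. The assertion that this ``intersection number'' equals $\int_\alpha\ch_1(\F,H)$ because $T$ happens to be smooth near $\alpha$ therefore has no content as written; the support of $T$ is all of $Y$, and nothing yet forces the pairing to localize at $\alpha$.

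What the argument actually requires---and what the paper's proof supplies---is a representative of $h^{n-1}$ supported near the linear subspace $P=D_1\cap\cdots\cap D_{n-1}$. The paper chooses Poincar\'e dual forms $\PD(D_i)$ supported in small tubes around the $D_i$, uses closedness of the current to trade the factors of $\omega$ for the $\PD(D_i)$ one at a time (writing $\omega-\PD(D_i)=dB_i$ with $B_i$ smooth on $M$, so that $T$ annihilates the exact correction), and then, since $\PD(D_1)\wedge\cdots\wedge\PD(D_{n-1})$ is supported where $\ch_1(\F,H)$ is an honest smooth closed form, applies the Poincar\'e--Lelong formula repeatedly to cut the integral down to $\alpha$. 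This localization is the heart of the proof and is missing from your write-up. Your proposed shortcut via Proposition \ref{prop:chern} does not repair this: that proposition identifies the class of $T$ with $\ch_1(\F)$ only as functionals on \emph{compactly supported} forms on $Y^{reg}$, whereas $\omega^{n-1}$ is not compactly supported there; upgrading that identification to the pairing against $\omega^{n-1}$ across $Y^s$ is essentially the statement of Proposition \ref{thm:analytic-degree} itself, so this route is circular.
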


\begin{proof}
It follows from Proposition \ref{prop:chern} that for general representatives
    $\alpha$,
$$
    \deg_\alpha(\F)=\int_\alpha \ch_1(\F,h)
    \ .
$$
    We write $\alpha=H_1\cap\cdots\cap H_{n-1}\cap Y$, where $H_i$ are
    generic smooth
    divisors in the class of $\Ocal_M(1)$ on 
    $M$, and $\alpha$ is in the smooth locus of $\F$.
    Choose a  Poincar\'e dual $\PD(H_1)$  supported in a small
    neighborhood of the $H_1$.
    Write: $\omega-\PD(H_1)=dB$, where $B$ is smooth.
    Then by Lemma \ref{lem:simpson}, 
    $$\int_Y\ch_1(\F,h)\wedge d(B\wedge \omega^{n-2})=0$$
    and
    $$
        \int_Y \ch_1(\F,h)\wedge\omega^{n-1}=\int_Y\ch_1(\F,h)\wedge
        \PD(H_1)\wedge \omega^{n-2}
        \ .
        $$
Continuing in this way, we have
    $$
        \int_Y \ch_1(\F,h)\wedge\omega^{n-1}=\int_Y\ch_1(\F,h)\wedge
        \PD(H_1)\wedge \cdots\wedge\PD(H_{n-1})
        \ .
        $$
Since we have assumed that 
        $\PD(H_1)\wedge \cdots\wedge\PD(H_{n-1})$ is supported in the
        smooth locus of $\F$, the usual Poincar\'e-Lelong formula gives
    \begin{align*}&\int_Y\ch_1(\F,h)\wedge \PD(H_1)\wedge \cdots\wedge\PD(H_{n-1})\\
        =&\int_{H_1\cap Y}\ch_1(\F,h)\wedge \PD(H_2)\wedge
        \cdots\wedge\PD(H_{n-1}) \\
        =&\int_{H_1\cap H_2\cap Y}\ch_1(\F,h)\wedge \PD(H_3)\wedge
        \cdots\wedge\PD(H_{n-1}) \\
        &\vdots \\
        =&\int_\alpha\ch_1(\F,h)
        \ .
    \end{align*}

\end{proof}

\begin{cor}\label{cor2.7}
   If $h$ is an admissible HE metric on $\F$, i.e.\ 
    a solution to \eqref{eqn:hym}, 
  then 
    $$\mu=\frac{2\pi}{\rank\F\cdot \vol(Y)} \int_Y \ch_1(\F,h)\wedge
    \frac{\omega^{n-1}}{(n-1)!}=\frac{2\pi}{(n-1)!\vol(Y)}\mu_\alpha(\F)\ .$$
\end{cor}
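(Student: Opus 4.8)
The plan is to establish the two displayed equalities separately. The first is a direct Chern--Weil computation that converts the wedge product against $\omega^{n-1}$ into a contraction by $\Lambda_\omega$ and then invokes the Hermitian--Einstein equation \eqref{eqn:hym}; the second is an immediate consequence of Proposition \ref{thm:analytic-degree} together with the definition of the slope $\mu_\alpha(\F)$. Since the analytically substantial work has already been carried out in Proposition \ref{thm:analytic-degree} and Lemma \ref{lem:simpson}, this corollary is essentially a bookkeeping computation, and I would present it as such.

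For the first equality I would work on the smooth locus of $\F$ inside $Y^{reg}$, where the Chern connection and its curvature $F_H$ are genuinely defined, and use $\ch_1(\F,H)=\tfrac{\sqrt{-1}}{2\pi}\tr F_H$. The key pointwise identity is that for any $(1,1)$-form $\beta$ one has $\beta\wedge\tfrac{\omega^{n-1}}{(n-1)!}=(\Lambda_\omega\beta)\,\tfrac{\omega^n}{n!}$ (as checked on $\beta=\omega$, both sides equal $n\,\omega^n/n!$). Applying this with $\beta=\ch_1(\F,H)$ and using that $\Lambda_\omega$ commutes with $\tr$, the equation $\sqrt{-1}\Lambda_\omega F_H=\mu\,\Id$ gives
$$
\Lambda_\omega \ch_1(\F,H)=\frac{1}{2\pi}\tr\!\left(\sqrt{-1}\Lambda_\omega F_H\right)=\frac{\mu\,\rank\F}{2\pi}\ ,
$$
so that $\ch_1(\F,H)\wedge\tfrac{\omega^{n-1}}{(n-1)!}=\tfrac{\mu\,\rank\F}{2\pi}\,\tfrac{\omega^n}{n!}$ pointwise. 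Integrating over $Y$ and writing $\vol(Y)=\int_Y \omega^n/n!$ yields $\int_Y \ch_1(\F,H)\wedge\tfrac{\omega^{n-1}}{(n-1)!}=\tfrac{\mu\,\rank\F}{2\pi}\vol(Y)$, and solving for $\mu$ produces the first expression.

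For the second equality, Proposition \ref{thm:analytic-degree} identifies $\int_Y\ch_1(\F,H)\wedge\omega^{n-1}$ with $\deg_\alpha(\F)$, whence $\int_Y\ch_1(\F,H)\wedge\tfrac{\omega^{n-1}}{(n-1)!}=\deg_\alpha(\F)/(n-1)!$; substituting into the first expression and using $\mu_\alpha(\F)=\deg_\alpha(\F)/\rank\F$ gives the stated formula. The only point requiring care is that $\int_Y\ch_1(\F,H)\wedge\omega^{n-1}$ is finite and that the pointwise manipulations are legitimate across the singular set, but this is exactly what the admissibility hypotheses—boundedness of $\sqrt{-1}\Lambda_\omega F_H$ and $L^2$ control of $F_H$—provide, and it is precisely the content already established in Lemma \ref{lem:simpson} and used in Proposition \ref{thm:analytic-degree}. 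Thus the genuine subtlety, convergence over $Y^s$, is inherited from those earlier results, and the remaining argument is the elementary Kähler-identity computation above.
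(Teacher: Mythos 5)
Your proof is correct and matches the paper's (implicit) argument: the corollary is stated there without a separate proof precisely because it follows, as you show, from the pointwise identity $\beta\wedge\tfrac{\omega^{n-1}}{(n-1)!}=(\Lambda_\omega\beta)\tfrac{\omega^n}{n!}$ combined with the Hermitian--Einstein equation and Proposition \ref{thm:analytic-degree}. Your attention to the normalization $\vol(Y)=\int_Y\omega^n/n!$ and to the fact that the integrability issues are already handled by admissibility and Lemma \ref{lem:simpson} is exactly the right bookkeeping.
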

Furthermore, by  \cite[Ex.\ 18.3.6]{Fulton:84}, we have the following,
which explains why we have defined the Chern classes in the way we did.
\begin{cor}[{\sc Asymptotic Riemann-Roch formula}]\label{ARR}
The Hilbert polynomial of $\F$ over $(Y, \O_Y(1))$ is given by 
$$P_{\F}(k)=\rank(\F)(a_1 k^n+ a_2 (\mu(\F)+ \deg(Y))k^{n-1})+O(k^{n-2})$$
for some universal constants $a_1, a_2$.
\end{cor}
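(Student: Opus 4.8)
The plan is to deduce the corollary directly from the singular Riemann--Roch theorem underlying the Chern classes of Definition \ref{def:chern}, and then to read off the top two coefficients. Since $\O(1)$ is ample, Serre vanishing gives $P_\F(k)=\chi(Y,\F(k))$ for $k\gg 0$, so it suffices to compute the Euler characteristic. By the Baum--Fulton--MacPherson transformation \cite{BFM:75} (equivalently \cite[Ch.\ 18]{Fulton:84}), applied to the structure morphism $Y\to\mathrm{pt}$, one has $\chi(Y,\F(k))=\deg\tau(\F\otimes\O(1)^{k})$, and the twisting formula $\tau(\F\otimes L)=\ch(L)\cap\tau(\F)$ turns this into $\chi(Y,\F(k))=\deg\big(\ch(\O(1))^{k}\cap\tau(\F)\big)$, where $\tau(\F)=\sum_i\tau_i(\F)$ is the class from \eqref{eqn:tau} and $\deg$ extracts the degree-zero component.

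Next I would expand $\ch(\O(1))^{k}=e^{kh}=\sum_{j\ge0}\tfrac{k^{j}}{j!}h^{j}$ with $h=c_1(\O(1))$ and cap against $\tau(\F)$. Since $h^{j}\cap(\cdot)$ lowers homological degree by $j$, only $j=i$ contributes to $\tau_i(\F)$ in degree zero, giving the polynomial identity
\[
P_\F(k)=\sum_{i=0}^{n}\frac{k^{i}}{i!}\,\deg\big(h^{\,i}\cap\tau_i(\F)\big),
\]
which already shows $\deg P_\F=n$ and expresses every coefficient as an intersection number. For the $k^{n}$ coefficient I substitute $\tau_n(\F)=\rank(\F)[Y]$ from \eqref{eqn:chern-classes}, yielding $\tfrac{1}{n!}\rank(\F)\deg(Y)$. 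For the $k^{n-1}$ coefficient I use the Lemma computing $\tau_{n-1}(\F)=\ch_1(\F)+\rank(\F)\tau_{n-1}(Y)$, so that $\deg(h^{n-1}\cap\tau_{n-1}(\F))=\deg_\alpha(\F)+\rank(\F)\deg(h^{n-1}\cap\tau_{n-1}(Y))$; since $[\alpha]$ is Poincar\'e dual to $h^{n-1}$ and $\mu_\alpha(\F)=\deg_\alpha(\F)/\rank(\F)$, dividing by $\rank(\F)$ produces the factor $\mu(\F)$ plus an $\F$-independent constant. Collecting the two terms gives the asserted form, with $a_1,a_2$ the resulting numerical constants and the bracketed ``$\deg(Y)$'' in the subleading term standing for the $\tau_{n-1}(Y)$-contribution $\deg(h^{n-1}\cap\tau_{n-1}(Y))$; all the $a_i$ are independent of $\F$, which is the content of ``universal'' here.

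The computation is short once the right Riemann--Roch statement is in place, so the only point demanding care, and the main potential obstacle, is the consistency of normalizations: one must verify that the homological Todd class $\tau$ used in Definition \ref{def:chern} is exactly the Baum--Fulton--MacPherson class for which $\chi=\deg\tau$ and $\tau(\F\otimes L)=\ch(L)\cap\tau(\F)$ hold, and that the degree-zero bookkeeping matches the analytic slope $\mu_\alpha$ fixed in Proposition \ref{thm:analytic-degree}. Because the paper's Chern classes are built directly on \cite{BFM:75}, this is a compatibility check rather than a new difficulty; the genuinely analytic content---that the Chern--Weil forms of an admissible metric compute these same homology classes---has already been supplied by Propositions \ref{prop:chern} and \ref{thm:analytic-degree}, so no serious obstruction remains beyond careful accounting of the constants.
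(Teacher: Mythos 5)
Your proof is correct and is essentially the paper's own argument: the paper proves this corollary simply by citing \cite[Ex.\ 18.3.6]{Fulton:84}, whose content is precisely the expansion $\chi(\F(k))=\sum_{i=0}^{n}\tfrac{k^{i}}{i!}\deg\bigl(h^{i}\cap\tau_{i}(\F)\bigr)$ that you derive from the Baum--Fulton--MacPherson transformation and then evaluate on the top two terms via Definition \ref{def:chern}. Your closing caveat is also the right reading of the statement: the constants are universal only in the sense of being independent of $\F$, and the ``$\deg(Y)$'' in the $k^{n-1}$ term must be understood as the $\tau_{n-1}(Y)$-contribution $\deg\bigl(h^{n-1}\cap\tau_{n-1}(Y)\bigr)$, exactly as your computation shows.
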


\subsection{Singular Bott-Chern formula}
In the following, we fix any admissible metric on $\O_Y(-k)$. Assuming $\dim
Y^s \leq n-3$, the argument in \cite{SibleyWentworth:15} gives the
following singular Bott-Chern formula based on our definition of $\ch_2$.
Recall that associated to a torsion-free sheaf $\Fcal$ is a cycle
$\Ccal(\Fcal)$ consisting of the irreducible codimension $2$ pieces of the
support of $\Fcal^{\ast\ast}/\Fcal$, with multiplicities. 
\begin{prop}[{\sc Singular Bott-Chern formula}]\label{SingularBottChern}
Assume that $\F$ is a torsion-free sheaf which admits an admissible HYM
    connection $A_\infty$ over $Y$ where $\dim Y^s\leq n-3$. Given an exact
    sequence
    \begin{equation} \label{eqn:phi}
        0\lra\ker{\phi} \lra\O_Y(-k)^{\oplus N}
        \xrightarrow[\hspace*{.75cm}]{\phi} \F \lra 0\ ,
    \end{equation}
then 
$$ 
\ch_2(A_\infty)+\ch_2(\ker\phi)-\ch_2(\O(-k)^{\oplus N}) =
    \Ccal(\Fcal)
$$
    in $[H_c^{2n-4}(Y^{reg})]^\ast$. 
\end{prop}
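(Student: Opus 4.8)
The plan is to reduce to a currential Bott-Chern identity on the smooth locus and to follow the transgression computation of \cite{SibleyWentworth:15}, the only genuinely new points being the control of the analytic terms across $\Sing(\F)$ afforded by admissibility and the identification of the defect current with the cycle $\Ccal(\F)$. Since the assertion lives in $H_{2n-4}(Y^{reg})$, I would work entirely over $Y^{reg}$ and treat $\F$ as a torsion-free sheaf on a smooth variety carrying the admissible HYM metric $H_\infty$ of $A_\infty$; because $\dim Y^s\le n-3$, both $\ch_2$ and the cycle $\Ccal(\F)$ are honest objects supported in $Y^{reg}$. Writing $V=\O(-k)^{\oplus N}$ with its fixed admissible metric, $K=\ker\phi$ with the induced subbundle metric, and letting $U\subset Y^{reg}$ be the open locus where $\phi$ is a bundle surjection, on $U$ one has a genuine short exact sequence of Hermitian holomorphic bundles $0\to K\to V\to\F\to 0$. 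Bott-Chern theory then produces a secondary form $\Phi$ on $U$, built from the second fundamental form of the metric splitting together with the transgression between the quotient and HYM metrics on $\F$, such that
$$
\ch_2(V,h_V)-\ch_2(K,h_K)-\ch_2(\F,H_\infty)=\tfrac{\sqrt{-1}}{2\pi}\,\partial\bar\partial\,\Phi \quad\text{on } U.
$$

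Next I would promote this to an identity of currents on all of $Y^{reg}$. The complement $D=Y^{reg}\setminus U$ is precisely where $\F$ fails to be locally free; it has codimension $\geq 2$, and its codimension-two part is the support of $\Ccal(\F)$. Using the $L^2$-bound and the boundedness of $\sqrt{-1}\Lambda_\omega F_{H_\infty}$ from Definition \ref{def:admissible}, together with the growth estimate $\log^+|s|^2\in H^1_{loc}$ of Theorem \ref{Singular DUY} applied to the sections cutting out $D$, the forms $\ch_2(\F,H_\infty)$, $\ch_2(K,h_K)$ and the transgression $\Phi$ are locally integrable across $D$, and by Lemma \ref{lem:simpson} the Chern-Weil forms define closed currents. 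The essential point is that $\partial\bar\partial[\Phi]$, computed distributionally, equals the smooth form on $U$ plus a residue current supported on $D$; a local Poincar\'e-Lelong computation near a generic point of each codimension-two component of $D$ identifies this residue with the integration current over that component weighted by the length of $(\F^{\ast\ast}/\F)$ there, and summing yields exactly $\Ccal(\F)$. Passing to $H_{2n-4}(Y^{reg})$ the $\partial\bar\partial$-exact part drops out, leaving the stated formula.

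The step I expect to be the main obstacle is this currential globalization: proving that the Bott-Chern transgression $\Phi$ is integrable up to $D$ and that its distributional $\partial\bar\partial$ produces precisely $\Ccal(\F)$ with the correct multiplicities. This is exactly where the admissibility hypotheses and the regularity conclusions of Theorem \ref{Singular DUY} are indispensable, and where one must verify that nothing is lost or gained along the higher-codimension part of $D$ --- harmless in $H_{2n-4}(Y^{reg})$ since it has codimension $\geq 3$, but requiring care in the currential estimates.

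Finally, as an independent check at the level of homology classes one can bypass the transgression altogether. Additivity of the Baum-Fulton-MacPherson classes on $0\to K\to V\to\F\to 0$ gives $\ch_2(\F)=\ch_2(V)-\ch_2(K)$; additivity on $0\to\F\to\F^{\ast\ast}\to Q\to 0$, with $Q=\F^{\ast\ast}/\F$ supported in codimension $\geq 2$, gives $\ch_2(\F^{\ast\ast})=\ch_2(\F)+\ch_2(Q)$ where $\ch_2(Q)=\Ccal(\F)$; and Proposition \ref{prop:chern}, applied to the reflexive hull $\F^{\ast\ast}$ (which carries the same connection $A_\infty$ off codimension two), identifies $[\ch_2(A_\infty)]$ with $\ch_2(\F^{\ast\ast})$. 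Combining these recovers the claimed identity and pins down the sign and multiplicity conventions used in the transgression argument.
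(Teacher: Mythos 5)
Your proposal is correct, but the logical weight is distributed in the opposite way from the paper: what you present as an ``independent check'' in your final paragraph is, almost verbatim, the paper's entire proof, while your primary transgression argument is the heavy machinery the paper deliberately avoids writing out. The paper's proof is purely homological: additivity of the Baum-Fulton-MacPherson class $\tau$ applied to \eqref{eqn:phi} and to $0\to\F\to\F^{**}\to T\to 0$, the identification $\ch_2(A_\infty)=\ch_2(\F^{**})$ from Proposition \ref{prop:chern} (which is where all the analysis of admissible metrics has already been spent), and $\tau_{n-2}(T)=\Ccal(\F)$ for the torsion sheaf $T$ supported in codimension two. That three-line argument is exactly your closing paragraph, so your proposal does contain a complete proof of the stated result. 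Your primary route --- Bott-Chern secondary forms on the locus where $\phi$ is a bundle surjection, currential extension across $\Sing(\F)$, and a Poincar\'e-Lelong identification of the residue with $\Ccal(\F)$ --- is the Sibley-Wentworth argument that the paper cites in the sentence preceding the proposition but does not reproduce; it would yield a stronger, current-level identity rather than an equation in $H_{2n-4}(Y^{reg})$, but the residue and multiplicity analysis you correctly flag as the main obstacle is genuinely substantial and is not needed for the proposition as stated. One inaccuracy in that part: you invoke the regularity $\log^+|s|^2\in H^1_{loc}$ from Theorem \ref{Singular DUY}, but that theorem concerns stable reflexive sheaves equipped with their HE metrics, whereas here $\F$ is merely torsion-free with an admissible HYM connection; the integrability actually needed is already supplied by Definition \ref{def:admissible} together with Lemma \ref{lem:simpson}.
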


\begin{proof}
    From the exact sequences \eqref{eqn:phi} and
    $$
    0\lra\F\lra\F^{\ast\ast}\lra T\lra 0
    $$
    and the naturality of the homological Todd class, we have
   $$ 
        \tau(\F^{\ast\ast})= \tau(\F)+\tau(T) 
        =\tau(\Ocal_Y(-k)^{\oplus N})-\tau(\ker\phi)+\tau(T)
        \ .
   $$ 
    By Proposition \ref{prop:chern}, $\ch_2(A_\infty)=\ch_2(\F^{\ast\ast})$, 
    and since $T$ is supported in codimension $2$, 
    $\tau_{n-1}(T)=0$, and 
    $
    \tau_{n-2}(T)=\Ccal(\Fcal)
    $
    (cf.\ \cite[Remark, p.\ 17]{BFM:75}).
    The result follows. 
\end{proof}

\subsection{Proof of Singular DUY theorem}
By Hironaka's resolution of singularities, we may choose a projective
resolution of singularities
$p: \widehat{\PBbb}^N \rightarrow \PBbb^N$ 
with the strict transform  $\widehat{Y}$ of $Y$ being smooth.
We may also assume  that $\widehat\F=((p|_{\widehat Y})^\ast \F)^{**}$
is locally free.
Let $\theta$ be any fixed K\"ahler metric on $\widehat {\PBbb}^N$ 
and $\omega_\epsilon=(p^*\omega_{FS}+\epsilon \theta)|_{\widehat Y}$. 
We sometimes use $\widehat{Y}_\epsilon$  to emphasize that  $\widehat Y$
is endowed with the K\"ahler metric $\omega_\epsilon$.
We have the following elementary observation. 
\begin{lem}\label{lem2.12}
$\displaystyle \mu_\alpha(\F)=\lim_{\epsilon \rightarrow 0} \mu_\epsilon(\widehat{\F})$.
\end{lem}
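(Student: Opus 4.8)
The plan is to reduce the statement to the comparison of first Chern classes under the resolution $\pi\colon\widehat Y\to Y$, exploiting that every discrepancy coming from the singular and non-locally-free loci sits in codimension $\geq 3$. Since pullback preserves rank, $\rank\widehat\F=\rank\F$, so it suffices to prove the analogous statement for degrees, namely
\[
\deg_\alpha(\F)=\lim_{\epsilon\to 0}\deg_{\omega_\epsilon}(\widehat\F).
\]

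First I would use that $\widehat\F$ is locally free on the smooth compact manifold $\widehat Y$, so that $\deg_{\omega_\epsilon}(\widehat\F)=\int_{\widehat Y}c_1(\widehat\F)\wedge\omega_\epsilon^{n-1}$ is a purely cohomological quantity, depending continuously (indeed polynomially) on the class $[\omega_\epsilon]\in H^2(\widehat Y;\R)$. Since $\omega_\epsilon$ is a perturbation of the degenerate pullback form $\pi^\ast\omega$, we have $[\omega_\epsilon]\to\pi^\ast[\omega]$ as $\epsilon\to 0$, and therefore
\[
\lim_{\epsilon\to 0}\deg_{\omega_\epsilon}(\widehat\F)=\int_{\widehat Y}c_1(\widehat\F)\wedge(\pi^\ast\omega)^{n-1}.
\]

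Second, writing $(\pi^\ast\omega)^{n-1}=\pi^\ast(\omega^{n-1})$ and applying the projection formula, I would pass the computation down to $Y$:
\[
\int_{\widehat Y}c_1(\widehat\F)\wedge\pi^\ast(\omega^{n-1})=\int_Y\ch_1(\F)\wedge\omega^{n-1},
\]
where the equality rests on the identity $\pi_\ast\ch_1(\widehat\F)=\ch_1(\F)$. This is the crux, and it is where the hypotheses enter. The reflexive pullback $\widehat\F=(\pi^\ast\F)^{**}$ differs from the coherent pullback $\pi^\ast\F$, to which \eqref{eqn:tau-updown} applies directly, only through kernel and cokernel sheaves supported on $\pi^{-1}(Z)$, where $Z=Y^s\cup\Sing(\F)$. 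Because $Y$ is smooth in codimension two and $\F$ is reflexive, $\codim_Y Z\geq 3$, so $\pi(\pi^{-1}(Z))\subseteq Z$ has complex dimension $\leq n-3$; by naturality of the homological Todd class these correction terms contribute classes in $H_{2n-2}(Y)$ supported on $Z$, which must vanish. Combining this with \eqref{eqn:tau-updown} applied to $\pi^\ast\F$ and, for $\F=\Ocal_Y$, to $\widehat Y$, together with Definition \ref{def:chern}, yields $\tau_{n-1}(\F)=\pi_\ast\tau_{n-1}(\widehat\F)$ and $\tau_{n-1}(Y)=\pi_\ast\tau_{n-1}(\widehat Y)$, hence $\pi_\ast\ch_1(\widehat\F)=\ch_1(\F)$.

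Finally, by Proposition \ref{thm:analytic-degree} the last integral is exactly $\deg_\alpha(\F)$, closing the chain of equalities, and dividing by the common rank gives the claim. The main obstacle is the middle step: controlling the difference between $(\pi^\ast\F)^{**}$ and $\pi^\ast\F$ and verifying that both its torsion and its reflexive-hull cokernel push forward trivially. Everything hinges on the bound $\codim_Y Z\geq 3$, which forces these supported classes to disappear after $\pi_\ast$; by contrast the $\epsilon\to0$ limit and the projection formula are routine once this identification is in hand.
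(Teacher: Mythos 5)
Your proof is correct, and it rests on the same underlying fact as the paper's: the discrepancy between $\widehat\F$ and $\F$ is concentrated over $Z=Y^s\cup\Sing(\F)$, which has codimension $\geq 3$, so it cannot contribute to degrees in the limit. But the implementation is genuinely different. The paper works upstairs on $\widehat Y$: it quotes the divisor decomposition $c_1(\widehat\F)=p^\ast c_1(\F)+\sum_i a_iE_i$ with $E_i$ exceptional, and the result follows because $\int_{E_i}(p^\ast\omega)^{n-1}=0$ (each $E_i$ maps into $Z$, of dimension $\leq n-3$) while the first term integrates to $\deg_\alpha(\F)$. You work downstairs: the projection formula converts the limiting integral into $\langle[\omega]^{n-1},\pi_\ast\ch_1(\widehat\F)\rangle$, and you then prove $\pi_\ast\ch_1(\widehat\F)=\ch_1(\F)$ by comparing $(\pi^\ast\F)^{\ast\ast}$ with $\pi^\ast\F$, invoking \eqref{eqn:tau-updown} and the vanishing of $H_{2n-2}$ of the low-dimensional discrepancy locus. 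The two routes are essentially dual; yours buys some extra rigor in that it never has to make sense of $p^\ast c_1(\F)$, the pullback of a homology class from the singular variety $Y$ (a point the paper leaves informal), at the cost of invoking more of the Baum--Fulton--MacPherson formalism (additivity of $\tau$ on short exact sequences, and the fact that $\tau$ of a sheaf supported on a subvariety $W$ lies in $H_\ast(W)$, so that its pushforward factors through $H_\ast(Z)=0$ in degree $2n-2$). One small correction: your final citation of Proposition \ref{thm:analytic-degree} is misplaced, since that proposition compares $\deg_\alpha$ with the Chern--Weil integral of an admissible metric; the identity you actually need, $\langle[\omega]^{n-1},\ch_1(\F)\rangle=\ch_1(\F)\cdot[\alpha]=\deg_\alpha(\F)$, is just the definition of $\deg_\alpha$ in Section \ref{sec:chern} together with the fact that $[\alpha]$ is cut out by hyperplane classes; this does not affect correctness.
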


\begin{proof}
    As in Section \ref{sec:chern}, $\ch_1(\Fcal)$ defines an element of
    $H^2(Y^{reg})$. Then 
 $\ch_1(\widehat{\F})=p^* \ch_1(\F)+\sum_i a_i E_i$, 
    where $E_i$ are the exceptional divisors of the blow-up maps. The
    result follows from the definition \eqref{eqn:degree} of degree.
\end{proof}

\begin{prop}
$(\widehat\F, \omega_\epsilon)$ is stable for $0<\epsilon\ll 1$.  
\end{prop}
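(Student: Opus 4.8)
The plan is to argue by contradiction and reduce the $\omega_\epsilon$-stability of $\widehat\F$ to the $\omega$-stability of $\F$ on $Y$ through the resolution $\pi$. Suppose $\widehat\F$ fails to be $\omega_{\epsilon}$-stable for some sequence $\epsilon_j\to 0$. For each $j$ there is a destabilizing subsheaf, which I may take to be saturated (hence reflexive), $\widehat\Scal_j\subset\widehat\F$ with $\mu_{\epsilon_j}(\widehat\Scal_j)\geq\mu_{\epsilon_j}(\widehat\F)$ and $0<\rank\widehat\Scal_j=:s<r$, where $r=\rank\F$. Since $\pi$ is an isomorphism over the complement of a closed set of codimension $\geq 2$ in $Y^{reg}$, the restriction $\widehat\Scal_j|_{Y^{reg}}$ is a subsheaf of $\F|_{Y^{reg}}$ and extends uniquely to a saturated subsheaf $\Scal_j\subset\F$ of the same rank $s$.

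The core of the argument is a slope comparison in the limit $\epsilon\to 0$. Expanding $\deg_\epsilon(\widehat\Scal_j)=c_1(\widehat\Scal_j)\cdot[\omega_\epsilon]^{n-1}$ in powers of $\epsilon$ about the nef class $\pi^*[\omega]$, the leading term is $c_1(\widehat\Scal_j)\cdot(\pi^*\omega)^{n-1}$. Writing $c_1(\widehat\Scal_j)=\pi^*c_1(\Scal_j)+\sum_i b_i^{(j)}E_i$ and noting that each $E_i$ pairs trivially with $(\pi^*\omega)^{n-1}$ by the projection formula (the centers being of dimension $\leq n-3$), this leading term equals $\deg_\alpha(\Scal_j)$, so that $\mu_{\epsilon}(\widehat\Scal_j)=\mu_\alpha(\Scal_j)+O(\epsilon)$; similarly $\mu_\epsilon(\widehat\F)=\mu_\alpha(\F)+O(\epsilon)$ as in Lemma \ref{lem2.12}. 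Stability of $\F$ gives $\mu_\alpha(\Scal_j)<\mu_\alpha(\F)$, and since the values of $\deg_\alpha$ lie in a fixed discrete subset of $\QBbb$ (the Picard lattice paired with the fixed curve class $[\alpha]$) while the slopes of rank-$s$ subsheaves are bounded above by $\mu_\alpha(\F)$, taking the minimum over the finitely many ranks $s=1,\dots,r-1$ yields a uniform gap $\mu_\alpha(\F)-\mu_\alpha(\Scal_j)\geq\delta_0>0$ independent of $j$.

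I expect the main obstacle to be making the error term $O(\epsilon)$ \emph{uniform} over the infinite family $\{\widehat\Scal_j\}$, since the exceptional coefficients $b_i^{(j)}$ are a priori unbounded and enter the first-order term of the expansion. The key point that resolves this is that saturation inside the \emph{fixed} bundle $\widehat\F$ bounds these coefficients. Localizing at the generic point of each $E_i$, where both $\widehat\Scal_j$ and the quotient $\widehat\F/\widehat\Scal_j$ are locally free, the nonzero maps $\det\widehat\Scal_j\hookrightarrow\wedge^s\widehat\F$ and the surjection $\wedge^{r-s}\widehat\F\twoheadrightarrow\det(\widehat\F/\widehat\Scal_j)$ bound the order of vanishing of $\det\widehat\Scal_j$ along $E_i$ from both sides; hence each $b_i^{(j)}$ lies in a bounded range depending only on $\widehat\F$ and $s$. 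Consequently the first-order correction obeys $|\mu_{\epsilon}(\widehat\Scal_j)-\mu_\alpha(\Scal_j)|\leq C\epsilon$ with $C$ independent of $j$.

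Combining these estimates, for $\epsilon_j$ small enough we obtain the chain $\mu_{\epsilon_j}(\widehat\Scal_j)\leq \mu_\alpha(\Scal_j)+C\epsilon_j\leq \mu_\alpha(\F)-\delta_0+C\epsilon_j<\mu_\alpha(\F)-C\epsilon_j\leq \mu_{\epsilon_j}(\widehat\F)$, which contradicts the choice of $\widehat\Scal_j$ as a destabilizing subsheaf. This establishes that $\widehat\F$ is $\omega_\epsilon$-stable for all sufficiently small $\epsilon>0$.
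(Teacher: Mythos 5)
Your overall skeleton matches the paper's (expand $[\omega_\epsilon]^{n-1}$ around $(\pi^*[\omega])^{n-1}$, use the projection formula to identify the leading term with $\deg_\alpha(\Scal_j)$, invoke a uniform gap $\delta_0>0$ coming from stability of $\F$ plus discreteness of slopes, and then try to make the correction term uniformly small), but there is a genuine gap in the uniformity step, and it is exactly at the point where the paper's key argument sits. Bounding the exceptional coefficients $b_i^{(j)}$ controls only \emph{part} of the correction. Writing $[\omega_\epsilon]^{n-1}=(\pi^*[\omega])^{n-1}+\sum_i c_i^\epsilon\alpha_i$ with the $\alpha_i$ built from $\omega_M|_{\widehat Y}$ and the perturbation classes, the correction to $\mu_\epsilon(\widehat\Scal_j)$ is $\tfrac{1}{s}\sum_i c_i^\epsilon\, c_1(\widehat\Scal_j)\cdot\alpha_i$, and after your decomposition $c_1(\widehat\Scal_j)=\pi^*c_1(\Scal_j)+\sum_i b_i^{(j)}E_i$ it splits into the exceptional contribution (which your $b_i^{(j)}$ bound handles) \emph{plus} the pullback contribution $c_1(\Scal_j)\cdot\pi_*\alpha_i$. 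The curve classes $\pi_*\alpha_i$ are not proportional to $\alpha=[\omega]^{n-1}$ (they involve the perturbation direction), so stability of $\F$ gives no upper bound on these pairings, and the family $\{c_1(\Scal_j)\}_j$ is a priori unbounded in the N\'eron--Severi group even though $\deg_\alpha(\Scal_j)$ is bounded above. Your claim $|\mu_\epsilon(\widehat\Scal_j)-\mu_\alpha(\Scal_j)|\leq C\epsilon$ with $C$ independent of $j$ therefore does not follow from what you proved.

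The missing ingredient is precisely the paper's central lemma: choose an ample line bundle $\mathcal L$ on $\widehat Y$ with an inclusion $\widehat\F\hookrightarrow\mathcal L^{\oplus l}$. Then $\det\widehat\Scal_j$ injects into $\wedge^s(\mathcal L^{\oplus l})$, so $s\,c_1(\mathcal L)-c_1(\det\widehat\Scal_j)$ is represented by an effective divisor, and since $\int_D\alpha_i\geq 0$ for every effective divisor $D$ (the $\alpha_i$ being semipositive), one gets the uniform upper bound $\tfrac{1}{s}\,c_1(\widehat\Scal_j)\cdot\alpha_i\leq c_1(\mathcal L)\cdot\alpha_i$ for \emph{all} subsheaves and all $i$ simultaneously. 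Note that this single estimate bounds the full class $c_1(\widehat\Scal_j)$ against each $\alpha_i$, making both your decomposition into pullback-plus-exceptional and your localization argument at the generic points of the $E_i$ unnecessary (your bound $0\leq b_i^{(j)}\leq a_i$ is correct and provable by comparing $\widehat\Scal_j$ with $\pi^*\Scal_j$ using saturation, but it addresses the wrong term). If you insert the embedding argument to control $c_1(\Scal_j)\cdot\pi_*\alpha_i$ (or directly $c_1(\widehat\Scal_j)\cdot\alpha_i$), your proof closes up and coincides in substance with the paper's.
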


\begin{proof}
Write 
$$\omega_\epsilon^{n-1}=\omega^{n-1}+\sum_i c_i^{\epsilon} \alpha_i\ ,$$ 
    where $\alpha_i$ is the wedge product of $(p^\ast\omega_{FS}|_{\widehat{Y}},
    \theta|_{\widehat Y}$  and $c^\epsilon_i$ are
    constants depending on $\epsilon$ and $1\leq i\leq n-1$. Furthermore, $c_i^\epsilon
    \rightarrow 0$ as $\epsilon\rightarrow 0$. To prove stability of $\widehat \F$ we 
    need to show that for any nontrivial proper subsheaf $\widehat{\mathcal S} \subset \widehat{\F}$, 
$
\mu_\epsilon(\widehat{\mathcal{S}})<\mu_\epsilon(\widehat{\F})
$,
where $\mu_\epsilon$ denotes the slope of a sheaf with respect to the
    K\"ahler class $[\omega_\epsilon]$. Let
    $\mathcal{S}=p_*\widehat{\mathcal{S}}$. We have
$$
    \mu_\epsilon(\widehat{\mathcal{S}})=\frac{1}{\rank(\widehat{\mathcal S})}
    \int_{\widehat Y} c_1(\det{\widehat{\mathcal S}}) \wedge
    \omega_\epsilon^{n-1}=\mu(\mathcal S)+ \frac{1}{\rank(\widehat{\mathcal S})}
    \sum_i c_i^{\epsilon}\int_{\widehat Y} c_1(\det{\widehat{\mathcal S}}) \wedge \alpha_i
    \ .
$$
Let
$f(\epsilon, \widehat{\mathcal{S}})=\mu_\epsilon(\widehat \F)-\mu_\epsilon(\widehat{\mathcal S})
$. 
Then
    \begin{align*}
        f(\epsilon, \widehat{\mathcal{S}})&=\mu(\F)-\mu(\mathcal S)\\
        &\ + 
    \sum_i c_i^{\epsilon}\left(\frac{1}{\rank(\widehat{\mathcal F})}
    \int_{\widehat Y} c_1(\det{\widehat {\mathcal F}})
    \wedge \alpha_i-\frac{1}{\rank(\widehat{\mathcal{S}})}\int_{\widehat Y}
    c_1(\det{\widehat{\mathcal{S}}}) \wedge
    \alpha_i\right) \ .
    \end{align*}
It suffices to show  that for
$\epsilon$ small enough, one has   
$f(\epsilon, \widehat{\mathcal{S}})>0$ for any such
$\widehat{\mathcal{S}}$ as above. 
In order to prove this, we first note that there exists $C>0$ so that 
$\mu(\F)-\mu(\mathcal S)\geq C>0$, 
for any $\mathcal S=p_*\widehat{\mathcal{S}}$, 
    since $\F$ is stable over $(Y, \omega)$. 
So it suffices to show that there exists a constant $C'$ so that 
$$
    \frac{1}{\rank(\widehat \Scal)}\int_{\widehat Y}  c_1(\det{\widehat\Scal}) \wedge
    \alpha_i\leq C'\ .
$$
To show this, take a line bundle $\mathcal L$ over $\widehat Y$ that is ample enough so that we have an inclusion as 
$
\widehat{\F} \hookrightarrow \mathcal{L}^{\oplus l}
$,
for some $l>0$. In particular, $\widehat\Scal$ can be viewed as a
    subsheaf of $\mathcal{L}^{\oplus l}$, 
    which implies $\det(\widehat \Scal)$ can be viewed as a subsheaf of 
    $\wedge^{\rank (\widehat\Scal)}\mathcal (\mathcal{L}^{\oplus l})$. In particular, since $\int_D\alpha_i\geq 0$ for any divisor $D$, we have 
$$
    \frac{1}{\rank(\widehat \Scal)}\int_{\widehat Y}  c_1(\det{\widehat\Scal}) \wedge
    \alpha_i\leq \int_{\widehat Y} c_1(\mathcal{L}) \wedge \alpha_i\ .
$$
The conclusion follows.
\end{proof}

Given the above, the idea is to take limits of the HYM connections $A_\epsilon$ on 
$\widehat \F$ over $(\widehat Y, \omega_\epsilon)$. We need the following 
\begin{lem}
There exist $C$ independent of $\epsilon$ so that 
$$
\int_{\widehat Y} |F_{A_\epsilon}|^2 \dVol_\epsilon \leq C\ .
$$
\end{lem}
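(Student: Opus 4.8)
The plan is to carry out the entire estimate on the fixed smooth compact resolution $\widehat Y$, where $Y_\epsilon=(\widehat Y,\omega_\epsilon)$ and each $A_\epsilon$ is a genuine smooth HYM connection on the locally free sheaf $\widehat{\F}$; in particular there are no regularity issues across any singular set, and all integrals are ordinary integrals of smooth forms over a compact manifold. The starting point is the pointwise K\"ahler--Chern--Weil identity for the Chern connection of a Hermitian holomorphic bundle: decomposing the $(1,1)$ curvature $F_{A_\epsilon}$ into its $\omega_\epsilon$-primitive part and its trace part and applying Weil's identity for primitive forms gives
$$
|F_{A_\epsilon}|^2\,\frac{\omega_\epsilon^n}{n!}
=|\Lambda_{\omega_\epsilon}F_{A_\epsilon}|^2\,\frac{\omega_\epsilon^n}{n!}
+c_n\,\ch_2(\widehat{\F},A_\epsilon)\wedge\frac{\omega_\epsilon^{n-2}}{(n-2)!}\ ,
$$
for a universal constant $c_n$ (equal to $-8\pi^2$ in the standard normalization). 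Integrating over $\widehat Y$ splits $\int_{Y_\epsilon}|F_{A_\epsilon}|^2\dVol_\epsilon$ into an ``Einstein'' term and a purely topological term, and it suffices to bound each uniformly in $\epsilon$.

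For the Einstein term, the HYM condition $\sqrt{-1}\Lambda_{\omega_\epsilon}F_{A_\epsilon}=\mu_\epsilon\Id$ gives $|\Lambda_{\omega_\epsilon}F_{A_\epsilon}|^2\equiv \rank(\widehat{\F})\,\mu_\epsilon^2$ pointwise, so this term equals $\rank(\widehat{\F})\,\mu_\epsilon^2\,\vol(Y_\epsilon)$. By the Hermitian--Einstein relation (the smooth analogue of Corollary \ref{cor2.7}) one has $\mu_\epsilon=\tfrac{2\pi}{(n-1)!\,\vol(Y_\epsilon)}\,\mu_\epsilon(\widehat{\F})$, whence the Einstein term is a fixed multiple of $\mu_\epsilon(\widehat{\F})^2/\vol(Y_\epsilon)$. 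By Lemma \ref{lem2.12} the slopes $\mu_\epsilon(\widehat{\F})$ converge to $\mu_\alpha(\F)$ as $\epsilon\to 0$, so the numerator stays bounded. For the topological term, Chern--Weil shows that $\ch_2(\widehat{\F},A_\epsilon)$ is a closed form representing the fixed class $\ch_2(\widehat{\F})\in H^4(\widehat Y)$, so $\int_{\widehat Y}\ch_2(\widehat{\F},A_\epsilon)\wedge\omega_\epsilon^{n-2}$ equals the intersection number $\ch_2(\widehat{\F})\cdot[\omega_\epsilon]^{n-2}$, a fixed polynomial in the class $[\omega_\epsilon]\in H^2(\widehat Y;\R)$.

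Finally I would use that $[\omega_\epsilon]\to[\pi^\ast\omega]$ in $H^2(\widehat Y;\R)$ as $\epsilon\to 0$, since $\omega_\epsilon$ is by construction a perturbation of $\pi^\ast\omega$ by $\epsilon$-multiples of fixed closed forms. Consequently $\vol(Y_\epsilon)=[\omega_\epsilon]^n/n!\to[\pi^\ast\omega]^n/n!=\vol(Y)>0$, so $\vol(Y_\epsilon)$ stays bounded above and bounded away from zero for $0<\epsilon\ll 1$; and the topological pairing $\ch_2(\widehat{\F})\cdot[\omega_\epsilon]^{n-2}$ converges. Combining these, both the Einstein term and the topological term are uniformly bounded, which yields the desired constant $C$.

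The step I expect to be the main point---rather than a genuine obstacle, since everything happens on the fixed smooth compact $\widehat Y$ with smooth connections---is the bookkeeping that makes every $\epsilon$-dependent quantity a continuous function of the cohomology class $[\omega_\epsilon]$: the slope through Corollary \ref{cor2.7} and Lemma \ref{lem2.12}, the volume, and the characteristic pairing. In particular one must check that $\vol(Y_\epsilon)$ does not degenerate as $\epsilon\to 0$, as this is what prevents the factor $\mu_\epsilon^2\,\vol(Y_\epsilon)\sim \mu_\epsilon(\widehat{\F})^2/\vol(Y_\epsilon)$ from blowing up; this is guaranteed by the convergence $[\omega_\epsilon]\to[\pi^\ast\omega]$ and the positivity of $[\pi^\ast\omega]^n$.
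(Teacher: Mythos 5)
Your proposal is correct and takes essentially the same approach as the paper: the paper's proof is exactly this decomposition of $\int_{\widehat Y}|F_{A_\epsilon}|^2\dVol_\epsilon$ into the topological term $\int_{\widehat Y}\Tr(F_{A_\epsilon}\wedge F_{A_\epsilon})\wedge\omega_\epsilon^{n-2}$ plus $c\,\mu_\epsilon^2\Vol(\widehat Y_\epsilon)$, followed by an appeal to Lemma \ref{lem2.12}. You merely make explicit the details the paper leaves implicit (cohomological invariance of the Chern--Weil integrand, convergence of $[\omega_\epsilon]$, and non-degeneration of $\Vol(\widehat Y_\epsilon)$), which is exactly the right bookkeeping; the precise value/sign of the universal constant $c_n$ is immaterial to the boundedness argument.
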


\begin{proof}
Indeed, we have
$$
\int_{\widehat Y} |F_{A_\epsilon}|^2 \dVol_\epsilon  \leq \int_{\widehat
    Y}\Tr(F_{A_\epsilon} \wedge F_{A_\epsilon})\wedge
    \frac{\omega_{\epsilon}^{n-2}}{(n-2)!}+c\mu_\epsilon^2
    \Vol(\widehat{Y}_\epsilon)\ ,
$$
where $c$ is some dimensional constant. Given this, the result follows from Lemma \ref{lem2.12}.
\end{proof}

Fix any sequence $A_i:=A_{\epsilon_i}$ with $\epsilon_i \rightarrow 0^{+}$
as $i\rightarrow \infty$. Set
$$
\Sigma=\{x\in Y^{reg}: \lim_{r\to 0}\liminf_i r^{4-2n}\int_{B_x(r)}
|F_{A_i}|^2 \dVol_{\epsilon_i} > 0\}\ ,
$$
which is a complex subvariety of $Y^{reg}$ of codimension at least $2$ (see \cite[Thm.\ 4.3.3]{Tian:00}) and can be decomposed as 
$$
\Sigma=\Sigma_b\cup \Sing(A_\infty)
$$
where $\Sigma_b$ denotes the pure codimension $2$ part of $\Sigma$. By passing to a subsequence we can assume
\begin{itemize}
\item as a sequence of Radon measures over $\widehat Y$, 
$$
\mu_i:=|F_{A_i}|^2\dVol_i \rightharpoonup \mu_\infty=|F_{A_\infty}|^2 \dVol+\nu
\ ,
$$
where $\nu|_{Y^{reg}}=\sum_k 8\pi^2 m_k \Sigma_k$. Here $\Sigma_k$ denotes
        the pure codimension $2$ components of $\Sigma$ and $m_k\in \mathbb{Z}$  is usually called the analytic multiplicity of $\Sigma_k$. 
 
\item outside $\Sigma$, up to gauge transformation, $A_i$ converge to $A_\infty$ locally smoothly.
\end{itemize}
Using Siu's extension theorem \cite{Siu:69}, combined with \cite[Thm.\
2]{BandoSiu:94}, we have the following results regarding the extension
property.
\begin{prop}\label{prop3.2}
Let $\Fcal_\infty$ be the holomorphic vector bundle on $Y^{reg}\setminus \Sigma$ defined by $\db_{A_\infty}$.  Then $\iota_*\F_\infty$ is a reflexive coherent sheaf on $Y$ where $\iota: Y^{reg}\setminus \Sigma \rightarrow Y$ denotes the natural inclusion map. 
Furthermore, for any local section $s$ of $\F_\infty$, $\log^{+}|s|^2 \in W^{1,2}_{loc} \cap L^\infty_{loc}$. 
\end{prop}
\begin{proof}
The extension can be done in two steps: 
\begin{enumerate}
\item by Theorem $2$ in \cite{BandoSiu:94}, $F$ can be extended to be a
    coherent reflexive sheaf $\F_\infty\to Y^{reg}$;

\item by  \cite[Thm.\ 5]{Siu:69}, 
    $\F_\infty$ can be further extended as a coherent
        reflexive sheaf over $Y$; this is because  
    $Y^s$ has codimension  $\geq 3$ and $Y$ is normal.
\end{enumerate}

Now we prove the second part regarding local sections. For this, one can directly repeat the 
    argument in
     \cite[Thm.\ 2]{BandoSiu:94} with known results from \cite{Simon:83}.
     Let $y\in Y^s$.  By choosing a local coordinate of $\P^N$ at $y$, we
     can assume $Y\subset B^{l} \times B^{N-l}$ and $p^{-1}(t)$ is a smooth
     surface for generic $t\in p(Y)$.  Here $y=(0,0)$ and $p: B^l\times
     B^{N-l}\rightarrow B^{N-l}$ denotes the natural projection.
     Furthermore, we can assume the metric on $B^l\times B^{N-l}$ is flat.
     In the following, we will use $A \lesssim B$ (resp. $A\gtrsim B$) to
     denote $A\leq cB$ (resp. $A\geq cB$) for some constant $c$. For
     generic $t\in p(Y)$ as above, denote $Y_t=p^{-1}(t)\cap Y$,
     $f_t=\log^{+}|s|^2|_{Y_t}$ and $F_t=F_{A_\infty}|_{Y_t}$.  We know
     that
    $$
    \Delta f_t \geq -|F_t|\ .
    $$
    Let $\chi$ be a cut-off function in $B^l$ supported in a small
    neighborhood of $0$.     Now we can multiply the above inequality by $\chi^2 f_t$ and do integration by parts to get 
    $$
    \int_{Y_t} |\nabla (\chi f_t)|^2 \leq \epsilon \int_{Y_t} |\chi f_t|^2 + \epsilon^{-1} \int_{Y_t} \chi^2 |F_t|^2 + \int_{Y_t} |\nabla \chi|^2 f_t^2  
    $$
    for any $0<\epsilon\ll 1$.
  By the Sobolev inequality in \cite[Thm.\ 18.6]{Simon:83}, we could conclude 
    $$
    \int_{Y_t} |\nabla (\chi f_t)|^2 \lesssim  \int_{Y_t} |\nabla \chi|^2 f_t^2 + \int_{Y_t} \chi^2 |F_t|^2. 
    $$
    From this, we could integrate it with respect to $t$, and have that for any $K\subset p(Y)$ compact 
    $$
    \int_{p^{-1}(K)} |\nabla' (\chi f_t)|^2 \lesssim \int_{p^{-1}(K)} |\nabla
     \chi|^2 f_t^2+ \int_{p^{-1}(K)} \chi^2 |F_t|^2 
    $$
where $\nabla'$ denotes the total derivatives in the fiber direction of the
    projection $p$. Now we can take finitely many such projections $p$ to
    cover all the derivatives, thus could conclude $\log^{+}|s|^2\in
    W^{1,2}_{loc}$.    Suppose $s$ is a local section of
    $\overline{\E_\infty}$. By \cite[eq.\ (5.5)]{LiTian1995}, we have a Sobolev inequality for any function in $W^{1,2}(Y^{reg})$. Since $\Delta \log^+|s|^2 \geq -\mu(\E_\infty)$, by Moser iteration applied to $\eta^2 \log^{+}|s|^2$ where $\eta$ is a local cut-off function near the point in $\PBbb^N$, we know $\log^+|s|^2\in L^\infty_{loc}$. 
\end{proof}

\begin{lem}\label{Lemma: Extension of bubbling set}
The closure of $\Sigma$ is a subvariety of $Y$.
\end{lem}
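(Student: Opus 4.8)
The plan is to reduce the statement to a purely local question near the singular locus $Y^s=\Sing(Y)$ and then invoke a Bishop-type finite-volume extension theorem. Away from $Y^s$ there is nothing to prove: $\Sigma$ is already a complex subvariety of the manifold $Y^{reg}$ by \cite[Thm.\ 4.3.3]{Tian:00}, so the only issue is to show that the closure $\overline\Sigma$ of $\Sigma$ in $Y$ stays analytic across $Y^s$. Since $Y\subset\P^N$ is normal, $\Sing(Y)$ is a closed analytic subset of codimension $\geq 2$, and $\Sigma\subset Y^{reg}$ is analytic there; hence $\Sigma$ is a (pure codimension-two) analytic subset of the complex manifold $\P^N\setminus\Sing(Y)$. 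I would therefore aim to apply Bishop's theorem: a pure $d$-dimensional analytic subset of $U\setminus S$ with locally finite $\mathcal H^{2d}$-measure near the closed analytic set $S$ has analytic closure in $U$; here $U$ is a neighborhood in $\P^N$ and $S=\Sing(Y)$.

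The crucial input is the volume bound near $Y^s$, and this is exactly what the defect measure supplies. By the uniform energy estimate of the preceding Lemma, the masses $\int_{\widehat Y}|F_{A_i}|^2\,\dVol_{\epsilon_i}$ are bounded independently of $i$, so their weak limit $\mu_\infty$, and with it $\nu=\sum_k m_k\Sigma_k$, has finite total mass. Because $\omega=\omega_{FS}|_{Y^{reg}}$, this mass equals $\sum_k m_k\Vol_\omega(\Sigma_k)$ up to a dimensional constant, whence $\sum_k m_k\Vol_\omega(\Sigma_k)<\infty$. In particular the total Fubini--Study $(2n-4)$-volume of $\bigcup_k\Sigma_k$ is finite, so a fortiori it is locally finite near $Y^s$; the lower density bound coming from the monotonicity formula (each component carries mass $\geq\epsilon_0$) forces there to be only finitely many $\Sigma_k$, so $\bigcup_k\Sigma_k$ is a genuine analytic set in $Y^{reg}$. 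Bishop's theorem, applied in $\P^N$ with $S=\Sing(Y)$, then extends it to an analytic subvariety of $\P^N$; being contained in the closed set $Y$, the extension is a subvariety of $Y$, which is the desired conclusion.

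The main obstacle, and the point needing care, is to make sure that the density-defined set $\Sigma$ is genuinely captured by the finite-mass codimension-two cycle $\nu$, i.e.\ that no "hidden" concentration of $\Sigma$ escapes to or blows up in volume along $Y^s$ as the metrics $\omega_{\epsilon_i}$ degenerate over the exceptional locus lying above $Y^s$. This is precisely why I work entirely over $Y^{reg}$, where $\pi$ is an isomorphism and $\omega_{\epsilon_i}\to\omega$, and why the $\epsilon$-independent energy bound is essential: the absolutely continuous part $|F_{A_\infty}|^2\,\dVol$ contributes zero $(2n-4)$-density, so $\Sigma=\supp\nu$ is pure codimension two with finite volume, exactly as needed for Bishop. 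As a robustness check, the same finite-mass principle can be phrased currentwise: the closed positive current $\nu$ extends across $Y^s$ by the Skoda--El Mir theorem, and Siu's analyticity theorem then identifies the extension with an analytic cycle $\sum_k m_k[\overline{\Sigma_k}]$, giving the extension of $\Sigma$ together with its multiplicities.
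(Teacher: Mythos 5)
Your proposal is essentially the paper's own proof: both reduce the lemma to Bishop's extension theorem applied in the ambient projective space across $Y^s$, with the required finite-volume hypothesis supplied by the weak convergence $\mu_i\rightharpoonup\mu_\infty$ of the energy measures together with the uniform $L^2$ curvature bound (your extra remarks on the density lower bound forcing finitely many components, and the Skoda--El Mir/Siu reformulation, are harmless refinements of the same idea). The one caveat is your identification of $\Sigma$ with $\supp\nu$ as a \emph{pure} codimension-two set: in general $\Sigma$ also contains $\Sing(A_\infty)$, where the absolutely continuous part $|F_{A_\infty}|^2\dVol$ can itself have positive $(2n-4)$-density (e.g.\ at isolated singularities of the limiting reflexive sheaf), so the volume bound on $\nu$ only controls the codimension-two components $\Sigma_k$ --- but since the paper's own proof addresses exactly and only these components, this is a gap shared with, not a deviation from, the paper.
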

\begin{proof} Using the Bishop extension theorem \cite{Bishop:64}, to show $\Sigma_b$ can be extended,
it suffices to show that $\Sigma_k\subset M$ has bounded volume. Indeed, since $\mu_i$ weakly converges to $\mu_\infty$ as a sequence of Radon measures over $\widehat Y$, we have 
$$
\Vol(\Sigma_k) \leq \limsup_i \int_{\widehat Y_{\epsilon_i}} |F_{A_i}|^2 \leq C
\ .
$$
The extension of $\Sing(A_\infty)$ follows from that $\Fcal_\infty$ can be
    extended to be global reflexive sheaf over $Y$ and $\Sing(A_\infty)=\Sing(\Fcal_\infty)$. 
\end{proof}

Now we prove the existence of admissible HE metric in the rank $1$ case.

\begin{cor} \label{cor:rank-one}
Suppose $\rank \F=1$. Then there exists an admissible HYM metric on $\F$.
\end{cor}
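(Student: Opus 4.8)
The plan is to reduce to the abelian case on the resolution and then pass to the Uhlenbeck limit, exploiting that a rank $1$ reflexive sheaf is automatically stable. Since there is no integer strictly between $0$ and $\rank\F=1$, the sheaf $\F$ admits no destabilizing subsheaf, so the stability hypothesis is vacuous and all the constructions established above apply verbatim: the stability of $(\widehat\F,\omega_\epsilon)$, the uniform $L^2$ bound on $F_{A_\epsilon}$, the extension of the bubbling locus (Lemma \ref{Lemma: Extension of bubbling set}), and the extension/regularity statement of Proposition \ref{prop3.2}. On the smooth resolution $\widehat Y$ the sheaf $\widehat\F$ is a genuine holomorphic line bundle, and the Hermitian-Einstein metric $H_\epsilon$ on $(\widehat\F,\omega_\epsilon)$ exists by the elementary abelian case of \eqref{eqn:hym}: writing $H_\epsilon=H_0\,e^{-\phi_\epsilon}$ for a fixed background metric $H_0$, equation \eqref{eqn:hym} reduces to the scalar linear equation $\sqrt{-1}\Lambda_{\omega_\epsilon}\partial\bp\,\phi_\epsilon=\mu_\epsilon-\sqrt{-1}\Lambda_{\omega_\epsilon}F_{H_0}$, whose solvability follows from Hodge theory once $\mu_\epsilon$ is pinned down by the degree via Corollary \ref{cor2.7}.

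I would then form the Uhlenbeck limit $A_\infty$ of the Chern connections $A_\epsilon$ over $Y^{reg}$, where $\pi$ is a biholomorphism and $\omega_\epsilon\to\omega$. The essential simplification in the rank $1$ case is the absence of genuine bubbling. Indeed, up to the uniformly bounded term $\sqrt{-1}\Lambda_{\omega_\epsilon}F_{A_\epsilon}=\mu_\epsilon$, the total energy $\int|F_{A_\epsilon}|^2$ is governed by the topological quantity $\int\ch_2(\widehat\F,H_\epsilon)\wedge\omega_\epsilon^{n-2}$, which for a line bundle equals $\tfrac12\int c_1(\widehat\F)^2\wedge\omega_\epsilon^{n-2}$ and carries no codimension-two defect. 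Consequently the defect measure $\nu$ vanishes, $\Sigma$ has no codimension-two component, and after unitary gauge transformations $A_\epsilon\to A_\infty$ smoothly on $Y^{reg}\setminus\Sigma$. Because these gauge transformations are $U(1)$-valued they do not alter the holomorphic structure, which remains that of $\F|_{Y^{reg}}$; equivalently, the potentials $\phi_\epsilon$ converge on $Y^{reg}$ to a solution $\phi_\infty$, so that $H_\infty=H_0\,e^{-\phi_\infty}$ is an admissible metric on $\F|_{Y^{reg}}$ solving \eqref{eqn:hym}.

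Finally I would invoke Proposition \ref{prop3.2} to extend $A_\infty$ to a reflexive sheaf $\overline{\F_\infty}$ over all of $Y$, together with the stated $W^{1,2}_{loc}$ estimate for $\log^+|s|^2$ and the $L^\infty$ bound for global sections. It remains to identify $\overline{\F_\infty}$ with $\F$. Both are rank $1$ reflexive sheaves which, by the previous step, restrict to isomorphic line bundles on $Y^{reg}$ away from a set of codimension $\geq 2$; since $Y$ is normal and a rank $1$ reflexive sheaf is determined by its restriction to the complement of any codimension $\geq 2$ analytic subset, it follows that $\overline{\F_\infty}\cong\F$. Transporting $H_\infty$ through this isomorphism yields the desired admissible HE metric on $\F$.

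The step I expect to be the main obstacle is the uniform control of the limit near the singular set $Y^s$ and across the exceptional divisors of $\pi$ as $\epsilon\to 0$ — that is, ruling out any escape of curvature into the singular or exceptional locus — which is precisely what the abelian structure (the topological rigidity of $c_1^2$) and the regularity of Proposition \ref{prop3.2} are used to secure. Once that is in hand, the identification of the limiting holomorphic structure with $\F$ is formal via reflexivity, and the admissibility conditions of Definition \ref{def:admissible} follow from the bounded $\sqrt{-1}\Lambda_{\omega}F_{H_\infty}=\mu$ together with the $L^2$ curvature bound inherited from the uniform estimate on $F_{A_\epsilon}$.
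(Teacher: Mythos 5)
There is a genuine gap, and it sits exactly where you flagged the ``main obstacle'': your mechanism for ruling out bubbling and for identifying the limiting holomorphic structure does not work as stated. You argue that the defect measure $\nu$ vanishes because the total Yang--Mills energy is controlled by the topological quantity $\int c_1(\widehat\F)^2\wedge\omega_\epsilon^{n-2}$ plus the bounded term $|\Lambda_{\omega_\epsilon}F_{A_\epsilon}|^2$. But this energy identity holds in \emph{every} rank --- $\int\ch_2(\widehat\F,H_\epsilon)\wedge\omega_\epsilon^{n-2}$ is equally topological when $\rank\widehat\F>1$ --- and there bubbling certainly can occur: convergence of the total energies does not prevent the limit connection from capturing strictly less energy, with the difference concentrating along codimension-two sets (that is precisely what $\nu$ records). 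So your argument proves too much and is therefore invalid. The correct reason abelian HYM sequences do not bubble is the nonexistence of nontrivial finite-energy $U(1)$ instantons on $\R^4$, fed into the structure theory of blow-up loci (Tian), or else a direct linear elliptic estimate on the potentials $\phi_\epsilon$ uniform in $\epsilon$; you invoke neither. Relatedly, the claim that ``$U(1)$-valued gauge transformations do not alter the holomorphic structure, which remains that of $\F|_{Y^{reg}}$'' skips over the jumping phenomenon: each $A_\epsilon$ defines a holomorphic structure isomorphic to $\F$, but a smooth limit of such connections need not --- in higher rank this is exactly why Uhlenbeck limits produce $\Gr(\F)^{**}$ rather than $\F$. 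Your ``equivalently, the potentials $\phi_\epsilon$ converge'' is not equivalent and is not proved: since curvature is gauge-invariant in the abelian case, $\partial\bp\phi_\epsilon$ does converge locally away from $\Sigma$, but uniform control of $\phi_\epsilon$ itself (with respect to the degenerating metrics $\omega_\epsilon$, near the exceptional divisors) is exactly the missing ingredient.

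The paper's own proof sidesteps both issues and needs neither $\nu=0$ nor convergence of potentials. Having already extended $A_\infty$ to a reflexive sheaf $\overline{\F_\infty}$ on $Y$ (Proposition \ref{prop3.2}), it restricts to a high-degree smooth curve $C\subset Y\setminus(\Sigma\cup Y^s)$: there $A_i|_C\to A_\infty|_C$ smoothly up to gauge, and since line bundles on a compact curve form a Hausdorff moduli space (the Jacobian), the limit of the constant isomorphism class $\F|_C$ is again $\F|_C$, i.e.\ $\overline{\F_\infty}|_C\cong\F|_C$; as this holds for curves of all sufficiently high degrees, the two rank-one reflexive sheaves $\overline{\F_\infty}$ and $\F$ on the normal variety $Y$ coincide. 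Your opening reductions (stability is vacuous in rank one, and the HE metrics on $(\widehat\F,\omega_\epsilon)$ come from solving a linear scalar equation) are fine and even simpler than what the paper needs, and your final step (a rank-one reflexive sheaf on a normal variety is determined by its restriction off a codimension $\geq 2$ set) is also fine; but to connect them you must replace the bubbling/non-jumping assertions either by the curve-restriction argument above or by an honest uniform estimate proving that the $\phi_\epsilon$ converge.
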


\begin{proof}
Indeed, by choosing any higher curve $C\subset Y^{reg}$ so that $A_i|_C$ converges to $A_\infty$ smoothly. Then it is easy to see that $\overline{\F_\infty}|_C$ is isomorphic  to $\Fcal|_C$. Since $C$ can be chosen to be any higher degree, we know $\Fcal$ and $\overline{\Fcal_\infty}$ must be isomorphic.
\end{proof}

\begin{cor}\label{cor:polystable}
$\overline{\F_\infty}$ is semistable. 
\end{cor}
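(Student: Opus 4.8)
The plan is to exhibit $\overline{\F_\infty}$ as a reflexive sheaf carrying an admissible Hermitian-Einstein structure and then run the standard ``weakly holomorphic projection'' argument of Uhlenbeck-Yau and Bando-Siu, adapted to the singular base. First I would record the limiting data: the connection $A_\infty$ is smooth on $Y^{reg}\setminus\Sigma$ with $\ct F_{A_\infty}=\mu\,\Id$ for the \emph{same} Einstein constant $\mu$ as the approximants $A_\epsilon$, and its curvature is bounded and $L^2$, so $A_\infty$ is admissible on $\overline{\F_\infty}$ in the sense of Definition \ref{def:admissible}. Applying Corollary \ref{cor2.7} to $\overline{\F_\infty}$ together with Lemma \ref{lem2.12} shows that $\overline{\F_\infty}$ and $\F$ have the same slope, namely $\mu_\alpha(\overline{\F_\infty})=\tfrac{(n-1)!\,\vol(Y)}{2\pi}\,\mu$.

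Next I would reduce semistability to a slope estimate for a saturated subsheaf. Given any coherent $\Scal\subset\overline{\F_\infty}$ with $0<\rank\Scal<\rank\overline{\F_\infty}$, passing to the saturation only increases the degree, so it suffices to treat $\Scal$ reflexive. Such a $\Scal$ is a subbundle off an analytic set $Z\subset Y^{reg}$ of complex codimension $\geq 2$, and on $Y^{reg}\setminus(\Sigma\cup Z)$ it is cut out by the orthogonal projection $\pi$ onto $\Scal$, a Hermitian endomorphism with $\pi=\pi^{2}$ and $(\Id-\pi)\bp\pi=0$. By the regularity theory for weakly holomorphic subbundles (\cite{UhlenbeckYau:86, BandoSiu:94}), $\pi\in W^{1,2}_{loc}$ there. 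The key identity I would then establish is the admissible Chern-Weil formula for the induced metric,
\[
\deg_\alpha(\Scal)=\frac{(n-1)!}{2\pi}\int_{Y^{reg}}\tr\!\big(\pi\,\ct F_{A_\infty}\big)\,\frac{\omega^{n}}{n!}\;-\;C_0\int_{Y^{reg}}|\bp\pi|^{2}\,\frac{\omega^{n}}{n!},
\]
with $C_0>0$, normalized to match Corollary \ref{cor2.7} and Proposition \ref{thm:analytic-degree}. Substituting $\ct F_{A_\infty}=\mu\,\Id$ and using $\tr(\pi\,\mu\,\Id)=\mu\,\rank(\Scal)$ pointwise, the first integral equals $\rank(\Scal)\,\mu_\alpha(\overline{\F_\infty})$, whence
\[
\mu_\alpha(\Scal)=\mu_\alpha(\overline{\F_\infty})-\frac{C_0}{\rank(\Scal)}\int_{Y^{reg}}|\bp\pi|^{2}\,\frac{\omega^{n}}{n!}\;\leq\;\mu_\alpha(\overline{\F_\infty}),
\]
which is exactly semistability. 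I would also remark that equality forces $\bp\pi=0$, i.e.\ a holomorphic orthogonal splitting, which is the input one needs later for polystability.

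The hard part will be justifying the Chern-Weil identity above across the combined bad set $B=Y^s\cup\Sigma\cup Z$, since $A_\infty$ and the ambient geometry are singular there and the naive integration by parts produces boundary terms. My strategy is a cutoff argument: insert $\chi_\delta$ vanishing on a $\delta$-tube around $B$ and show the error terms are controlled by $\int_{B_\delta}|F_{A_\infty}|^{2}$ and $\int_{B_\delta}|\bp\pi|^{2}$, which tend to $0$ as $\delta\to0$ because $F_{A_\infty}\in L^2$, $\bp\pi\in L^2$, and $B$ has measure zero with complex codimension $\geq 2$. The delicate region is near $Y^s$, where I would invoke the regularity established in Proposition \ref{prop3.2} (the $W^{1,2}$-control on $\log^{+}|s|^2$ and the $L^\infty$-bound on sections) together with the normality and codimension $\geq 3$ of $Y^s$ to absorb the tube integrals; the identification of $\deg_\alpha(\Scal)$ with the analytic integral is the subsheaf analogue of Proposition \ref{thm:analytic-degree}, proved by the same Poincar\'e-dual slicing once the integration by parts is licensed. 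This regularity-plus-cutoff step is where essentially all the work lies; everything else is the formal Kobayashi-Lübke computation.
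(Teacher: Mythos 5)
Your strategy---weakly holomorphic projections plus a subsheaf Chern--Weil formula---is the classical Uhlenbeck--Yau/Bando--Siu route, and it is genuinely different from what the paper does; but as written it has a real gap at exactly the step you flag as ``where essentially all the work lies.'' First, the cutoff argument is circular: you propose to kill the tube errors using $\int_{B_\delta}|\bp\pi|^2\to 0$, but $\bp\pi\in L^2$ is not known a priori---finiteness of $\int|\bp\pi|^2$ is itself a consequence of the degree identity you are trying to prove (beforehand the analytic side is only known to lie in $[-\infty,\infty)$). Second, you cannot obtain the identification of $\deg_\alpha(\Scal)$ with the analytic integral as ``the subsheaf analogue of Proposition \ref{thm:analytic-degree}'': that proposition is proved for \emph{admissible} metrics in the sense of Definition \ref{def:admissible}, whereas the metric induced on $\Scal$ (equivalently on $\det\Scal$) by $A_\infty$ is not admissible---its curvature contains the second fundamental form, which is neither bounded nor in $L^2$ near the degeneracy set $Z$ in general. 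Third, the regularity you import from Proposition \ref{prop3.2} concerns holomorphic \emph{sections} of sheaves carrying admissible HE structures, not the projection endomorphism $\pi$, so it gives no control of $\pi$ near $Y^s$. Each of these points can presumably be repaired, but doing so amounts to redoing the Bando--Siu subsheaf theory over a singular base, which is substantial work not contained in the proposal.

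The paper sidesteps all of this with Kobayashi's determinant trick, using only machinery it has already established. A destabilizing subsheaf of rank $m$ produces a nontrivial holomorphic section $s$ of $(\Lambda^m\overline{\F}_\infty)^{**}\otimes\mathcal{L}$ for some rank-one reflexive sheaf $\mathcal{L}$, and this sheaf has \emph{negative} slope $\mu$ while carrying an admissible HE structure (induced from $A_\infty$ together with the rank-one case treated in the preceding corollary). Proposition \ref{prop3.2}, applied to this twisted exterior power, says $s$ is globally bounded; then the scalar inequality $\Delta|s|^2=|\nabla s|^2-\mu|s|^2\geq 0$ is integrated against cutoffs near the singular set, and since $|s|^2$ is bounded and $-\mu>0$ the error terms vanish, forcing $s\equiv 0$, a contradiction. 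The point to internalize is that integration by parts with a \emph{bounded scalar} is cheap, whereas integration by parts with the endomorphism $\pi$, whose derivative is a priori uncontrolled near $Y^s\cup\Sigma\cup Z$, is precisely the hard analysis your proposal defers.
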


\begin{proof}
Suppose $\Scal\subset \overline{\Fcal_\infty}$ is a proper saturated destabilizing subsheaf with
    $\rank\Scal=m$ and $\Lcal=\det\Scal$. 
Then  the sheaf $(\Lambda^{m}\overline{\F}_\infty\otimes \mathcal{L}^*)^{**}$ admits a global nontrivial section $s$ and 
$$\mu:=\mu((\Lambda^{m}\overline{\F}_\infty\otimes \mathcal{L}^*)^{**})<0\ .$$ 
    By Corollary \ref{cor:rank-one}, $\Lcal$ admits a HE metric. 
Applying Proposition \ref{prop3.2} to
    $(\Lambda^{m}\overline{\F}_\infty\otimes \mathcal{L}^*)^{**}$, we know
    $s$ is bounded. Moreover, we have 
$$
\Delta |s|^2=|\nabla s|^2-\mu |s|^2\geq 0\ .
$$
A contradiction to $s\neq 0$ then follows by  integration against a cut-off near the
    singularities $Y^{s}$. 
\end{proof}

 \begin{proof}[Proof of Theorem \ref{Singular DUY}]
Since $\overline{\F_\infty}$ is semistable and $\F$ is stable, by the
     Mehta-Ramanathan restriction theorem, we can choose a high degree
     curve $C\subset Y\setminus (Y^s \cup \Sigma)$ so that
     $\overline{\F_\infty}|_C$ is semistable and $\F|_{C}$ is stable. By
     assumption, we know $A_{i}|_{C}$ converges to $A|_C$ smoothly up to
     gauge. By the well-known results regarding moduli space of semistable
     bundles over curves, we know $\F|_{C}$ and $\overline{\F_\infty}|_{C}$
     must be $S$-equivalent, thus isomorphic. Since $C$ can be chosen to be
     of any high degree, we conclude that $\F$ and $\overline{\F_\infty}$ must be isomorphic
     (cf.\ \cite[Lemma 5.4]{GrebToma:17}). For uniqueness, suppose $h, h'$
     are two admissible HE metrics on $\Fcal$. Then,  as in Corollary
     \ref{cor:polystable}, $\id\in \Hom(\Fcal, \Fcal)$ is parallel with
     respect to the natural metric $h^\ast \otimes h'$. In particular, if we
     write $h'=h(g,)$, then $g$ is a holomorphic endomorphism of $\Fcal$.
    By stability, it 
     therefore must be a  constant multiple of the identity, and  
     the conclusion follows.
\end{proof}

As a direct corollary of this, combined with the regularity results, we have  
\begin{cor}\label{cor:polystability}
If a reflexive sheaf  $\Fcal\to Y$ admits an admissible HE metric, then
    $\Fcal$ is polystable.
\end{cor}

\begin{proof}
    Semistability follows as in the proof of Corollary
    \ref{cor:polystable}. Suppose $\Scal\subset\Fcal$ is a proper stable saturated
    subsheaf with $\mu(\Scal)=\mu(\Fcal)$. Note that $\Scal$ is reflexive. Let $h'$ be the admissible HE metric on $\Scal$. Now as Corollary \ref{cor:polystable}, we know that $\id: S' \rightarrow \Fcal$ is parallel. This implies there is a holomorphic,
    \cite[Prop.\ 4.1.7]{Kobayashi:87}) implies there is a holomorphic,
    orthogonal splitting $\Fcal=\Scal\oplus \Scal^\perp$ off a set of
    codimension $\geq 3$ in $Y^{reg}$. As above, the sheaves $\Scal$ and
    $\Scal^\perp$ extend as reflexive sheaves on $Y$, and the induced
    metrics are admissible HE. The result now follows by induction on the
    rank.   
\end{proof}

\section{Bogomolov-Gieseker equality and nef tautological
classes} \label{Section: Critical case of BMY}
We begin this section with the
\begin{proof}[Proof of Corollary \ref{cor:bogomolov}]
    Let $Y^{s}\subset S\subset Y$ be a closed subvariety of codimension
    at least $3$ so that $\Fcal$ is locally free on $Y\setminus S$. 
    Using Proposition \ref{prop:chern},   it follows exactly as in the proof of Proposition
    \ref{thm:analytic-degree} that
    \begin{align*} 
        (2rc_2(\Fcal)-(r-1)c_1^2(\Fcal))\cdot& [H]^{n-2} \\ 
        =&\int_Y(2rc_2(\Fcal,h)-(r-1)c_1^2(\Fcal,h))\wedge \omega^{n-2} \\
        =&\int_{Y\setminus S}(2rc_2(\Fcal,h)-(r-1)c_1^2(\Fcal,h))\wedge \omega^{n-2}
\ .
    \end{align*} 
    The inequality then follows as in the smooth case (cf.\ \cite[Thm.\ 4.4.7]{Kobayashi:87}).
    Moreover, in the case of equality, 
     $\Fcal\bigr|_{Y\setminus S}$ is projectively flat.
     Hence, there is a representation 
    $\rho : \pi_1(Y\setminus S)\to\PGL_r(\CBbb)$.
    Since by the codimension assumption, 
    $\pi_1(Y\setminus S)\simeq \pi_1(Y^{reg})$,
    we obtain from $\rho$ a $\PGL_r(\CBbb)$ bundle $P\to Y^{reg}$,
    isomorphic (as projective bundles) to $\PBbb(\Fcal)$ on $Y\setminus S$. 
    Let $\Lcal\to P$ be the pull-back of $\Ocal_{\PBbb(\Fcal)}(1)$ under
    this isomorphism. Again using the codimension of $S$, we know that
    $\Lcal$ extends as line bundle over $\pi : P\to Y^{reg}$. 
    Moreover, $\Lcal$ restricts to $\Ocal(1)$ on each fiber of $P$. Hence,
    $\pi_\ast\Lcal =: \widetilde\Fcal^\ast$ is a vector bundle with a
    projectively flat connection. Since $\widetilde \Fcal^\ast\simeq
    \Fcal^\ast$ on $Y\setminus S$, and $\Fcal$ is reflexive,  we have 
    $\widetilde \Fcal\simeq \Fcal$ on $Y^{reg}$. This completes the proof.
\end{proof}

These arguments can be further used to  improve on several previous
algebro-geometric results
corresponding to the case of the normalized tautological class of 
$\F$ being nef, and $\F$ pseudo-effective, which we will give definitions below. We start with $Y\subset \P^N$, a projective normal subvariety 
that is
smooth in codimension $2$.
Let $\Fcal$ be a reflexive sheaf over $Y$. Denote
$\Fcal^0=\Fcal|_{Y\setminus (Y^s \cup \Sing(\Fcal))}$. Now we define the \emph{normalized tautological class} by
$$
\zeta_\F:=c_1(\O_{\PBbb(\Fcal^0)}(1)\otimes \pi^*(\det(\F^0)^{-1}))\in
H^2(\PBbb(\Fcal^0), \ZBbb)\ ,
$$
where $\pi: \PBbb(\Fcal^0) \rightarrow Y\setminus (Y^s \cup \Sing(\Fcal))$
denotes the natural projection. Here we used the algebro-geometric
convention $\PBbb(\Fcal^0):=\text{Proj}(\oplus_{k\geq 0} \text{Sym}^k \Fcal^0)$.
\begin{defi}
Given $\Fcal$ as above
\begin{itemize}
\item $\Fcal$ is called nef if $\zeta_{\F}|_{\pi^{-1}(V)}$ is nef for any smooth projective subvarieties $V\subset Y\setminus (Y^s \cup \Sing(\Fcal)$ of dimension $1,2$. 

\item \emph{pseudo-effective} if for any $c>0$ there exists $j\in \mathbb
    N$ so that for $i>cj$, 
$$
H^0(X, S^{[i]}\F \otimes \O(jH)) \neq 0
$$
        for some (and hence any) ample Cartier divisor $H$. Here $S^{[i]}(\F):=(\text{Sym}^i(\F))^{**}$.
\end{itemize}
\end{defi}

Following the argument as \cite[Thm.\ 4.1]{Nakayama:04}, we will prove the following generalization of Corollary \ref{cor:bogomolov}.
\begin{thm}[{\sc Normalized tautological class is nef}]\label{Theorem: nef}
Let $\F$ be a reflexive sheaf over $Y$. The following are equivalent:
\begin{enumerate}
\item $\F$ is locally free over $Y^{reg}$ and $\zeta_\F$ is nef.

\item $\F$ is semistable for some ample divisor $A$ (not necessarily the one from the embedding $Y\subset \P^N$) and
$$
        (2rc_2(\F)-(r-1)c_1(\F)^2)\cdot [A]^{n-2} = 0\ ;
$$
\item $\F$ admits a filtration:
$
0\subset \F_1 \subset \cdots\subset \F_m=\F
$,
where $\F_i/\F_{i-1}$ are projectively flat over $Y^{reg}$ and $\mu(\F_i/\F_{i-1})=\mu(\F)$.
\end{enumerate}
\end{thm}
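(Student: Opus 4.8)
The plan is to run the cycle of implications $(2)\Rightarrow(3)\Rightarrow(1)\Rightarrow(2)$, following the template of \cite[Thm.\ 4.1]{Nakayama:04}. The only place where the smooth theory enters Nakayama's argument is through the Bogomolov-Gieseker inequality and the characterization of its equality case, and these are precisely what Corollary \ref{cor:bogomolov} supplies in our singular setting. Since $Y$ is smooth in codimension two, the classes $c_1(\F),c_2(\F)$ of Definition \ref{def:chern} are defined, and by Proposition \ref{prop:chern} and Proposition \ref{thm:analytic-degree} their intersection numbers against $A^{n-2}$ are computed by the Chern-Weil forms of an admissible metric. Cutting down by generic members of a large multiple of $A$, all intersection numbers in question are taken on smooth surfaces contained in $Y^{reg}$, where the Hodge index theorem is available; this is what lets Nakayama's surface-level computations go through verbatim.

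The heart of the matter is $(2)\Rightarrow(3)$. Assume $\F$ is semistable with respect to $A$ with $(2rc_2(\F)-(r-1)c_1(\F)^2)\cdot A^{n-2}=0$, and pass to a Jordan-H\"older filtration with stable factors $\G_i$ of slope $\mu_A(\F)$. Writing $D(\cdot)=2\,\rank(\cdot)\,c_2(\cdot)-(\rank(\cdot)-1)c_1(\cdot)^2$, the key input is the elementary discriminant identity: for $0\to\F'\to\F\to\F''\to0$ one has
$$
\frac{D(\F)}{r}=\frac{D(\F')}{r'}+\frac{D(\F'')}{r''}-\frac{r'r''}{r}\left(\frac{c_1(\F')}{r'}-\frac{c_1(\F'')}{r''}\right)^{2}.
$$
I would apply this by splitting off one stable factor at a time. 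Intersecting with $A^{n-2}$, equality of slopes makes the class $\beta=c_1(\F')/r'-c_1(\F'')/r''$ satisfy $\beta\cdot A^{n-1}=0$, so by the Hodge index theorem $\beta^2\cdot A^{n-2}\le0$ and the last term is non-negative; the first two terms are non-negative by Corollary \ref{cor:bogomolov} (the semistable case following by additivity over Jordan-H\"older factors). Since the left-hand side vanishes, all three terms vanish at each stage. Thus every factor satisfies $D(\G_i)\cdot A^{n-2}=0$, and $\beta^2\cdot A^{n-2}=0$ forces (again by Hodge index) the classes $c_1(\G_i)/r_i$ to be numerically proportional. The equality case of Corollary \ref{cor:bogomolov} then shows each $\G_i$ is projectively flat on the locus where it is locally free; since the singular set of a reflexive sheaf meets $Y^{reg}$ in codimension $\ge3$, across which the projectively flat (i.e.\ $\mathrm{PGL}$-representation) structure extends, each $\G_i$ is in fact locally free and projectively flat over all of $Y^{reg}$. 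This produces the filtration of (3), with the extra numerical proportionality of the $c_1(\G_i)/r_i$.

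For the converse directions I argue as in \cite{Nakayama:04}. For $(3)\Rightarrow(1)$: a projectively flat sheaf has nef normalized tautological class, and the numerical proportionality of the successive determinants (part of condition (3) in the sense used above) guarantees that nefness is preserved under the extensions making up the filtration, so $\zeta_\E$ is nef on $P$; local freeness of $\F$ over $Y^{reg}$ follows because the $\G_i$ are locally free there and the filtration restricts to one by subbundles. For $(1)\Rightarrow(2)$: nefness of $\zeta_\E$ restricted to the subvarieties $\P(\mathcal Q)\subset P$ attached to quotients $\F\twoheadrightarrow\mathcal Q$ bounds $\mu_A(\mathcal Q)$ from below, which is semistability; and a standard Segre-class computation gives $\pi_\ast(\zeta_\E^{\,r+1})=-\tfrac{1}{2r}\bigl(2rc_2(\F)-(r-1)c_1(\F)^2\bigr)$, a negative multiple of the discriminant, so $\zeta_\E$ nef yields $(2rc_2(\F)-(r-1)c_1(\F)^2)\cdot A^{n-2}\le0$ while Corollary \ref{cor:bogomolov} gives the reverse inequality, forcing equality.

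The step I expect to be the main obstacle is $(2)\Rightarrow(3)$, and within it the invocation of the equality case of Corollary \ref{cor:bogomolov}: one must check that the projective flatness it furnishes on the locally free locus of each $\G_i$ genuinely propagates across the codimension $\ge3$ set meeting $Y^{reg}$, and that the Hodge index theorem and the Chern-Weil identities remain valid after passing to the normal base $Y$. These are exactly the points secured by Proposition \ref{prop:chern}, Proposition \ref{thm:analytic-degree}, and Corollary \ref{cor:bogomolov}; once they are in hand the remainder of the argument is formal and identical to the smooth case.
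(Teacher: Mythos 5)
Your proposal is correct and takes essentially the same approach as the paper: the paper's entire proof of Theorem \ref{Theorem: nef} is the assertion that Nakayama's argument for \cite[Thm.\ 4.1]{Nakayama:04} goes through once one has the singular Bogomolov--Gieseker inequality and its equality case (Corollary \ref{cor:bogomolov}) together with the intersection-theoretic facts (Propositions \ref{prop:chern} and \ref{thm:analytic-degree}), and these are exactly the ingredients you deploy, with the generic-surface slicing, the discriminant identity, the Hodge index argument, and the codimension-$\geq 3$ extension of the projectively flat structure filled in. One substantive point in your favor: your reading of condition (3) as carrying the numerical proportionality $c_1(\F_i/\F_{i-1})/\rank(\F_i/\F_{i-1})\equiv c_1(\F)/r$ (as in Nakayama's original statement), which you extract in $(2)\Rightarrow(3)$ from the equality case of the Hodge index theorem, is genuinely needed for $(3)\Rightarrow(1)$ --- slope equality alone is insufficient, as $\F=\O\oplus L$ with $\deg_A L=0$ but $c_1(L)$ not numerically trivial satisfies the literal (3) while both (1) and (2) fail --- so your proof in fact establishes, and correctly uses, the strengthened form of (3).
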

\begin{proof}
Given any sheaf $\Hcal$, we denote 
$$\Delta(\Hcal):=2rc_2(\Hcal)-(r-1)c_1^2(\Hcal)\ .$$
We assume (1) holds and prove (2). 
    By definition, we know for any smooth curve $C$ lying in $Y^{reg}$ as
    the intersections of hypersurfaces in $|\Ocal(A)|$, $\tau_{\Fcal^0}|_C$
    is  nef, which by the well-known result
    is equivalent to $\Fcal^0|_C$ being semistable. 
    This implies $\Fcal$ is semistable.
    Choose $S$ to be a smooth projective surface in $Y \setminus Y^s$
    as an intersection of smooth hypersurfaces in $|\O(A)|$
    so that $\Fcal|_S$ is locally free and semistable. 
    Since $\tau_{\Fcal}|_S$ is nef by assumption, by the well-known result
    in the smooth case, 
$$
    \Delta(\Fcal)\cdot [A]^{n-2}=
    \Delta(\Fcal\bigr|_S)=0\ .
$$   
Suppose (2) holds. We prove (3) by induction on rank,
    where the rank $1$ case follows from Corollary \ref{cor:bogomolov}. Assume the statement holds for rank smaller than $\rank(\Fcal)$. We can choose $\Ecal_1\subset \Fcal$ to be a saturated stable subsheaf with $\mu(\Ecal_1)=\mu(\Fcal)$. Then we have the following exact sequence 
    $$
0\rightarrow \Ecal_1 \rightarrow \Fcal \rightarrow \Gcal \rightarrow 0
    $$
    where 
    $\Ecal_1$ is projectively flat over $Y^{reg}$ by Corollary
    \ref{cor:bogomolov}, and 
    $\Gcal$ is torsion-free and semistable with $\mu(\Gcal)=\mu(\Fcal)$. The statement follows from 
    \begin{itemize}
        \item[(a)] $\Delta(\Ecal_1)\cdot [A]^{n-2}=\Delta(\Gcal)\cdot
            [A]^{n-2}=\Delta(\Gcal^{**})\cdot [A]^{n-2}$=0;
        \item[(b)] $\Gcal^{**}=\Gcal$ over $Y^{reg}$.
    \end{itemize}    
    Indeed, we can apply the induction to $\Ecal_1$ and $\Gcal^{**}$ to get
    such filtrations as in (3), and these  naturally give  a filtration for $\Fcal$ over $Y^{reg}$.
    We first prove $(a)$. Choose  a smooth projective surface $S\subset 
    Y^{reg}$ given by the intersection of smooth hypersurfaces in
    $|\O_Y(A)|$, and  so that $\Ecal_1|_S$  and 
    $\Gcal|_S$ are torsion-free and semistable, and $\Fcal|_S$ is locally
    free and  semistable.  We have the exact sequence  
    $$
0\lra\Ecal_1|_S \lra\Fcal|_S \lra\Gcal|_S \lra 0\ .
    $$
    By \cite[Corollary 7.3.2]{HuybrechtsLehn:10}, we know  that
    $$
 \Delta(\E_1)\cdot[S]=\Delta(\Fcal)\cdot[S]=\Delta(\Gcal)\cdot[S]=0\ .
    $$
Together with the Bogomolov inequality for $(\Gcal|_S)^{**}$, this implies 
$\Delta((\Gcal|_S)^{**})=0$. 
Thus $\Gcal|_S=(\Gcal|_S)^{**}$ and $\Delta(\Gcal^{**})\cdot [S]=0$. Note
    this argument in particular implies $\supp(\Gcal^{**}/\Gcal)$ has
    codimension at least $3$. 
    But then the argument in \cite[Prop.\ 2.3]{SibleyWentworth:15} shows
    that $\Gcal=\Gcal^{\ast\ast}$ on $Y^{reg}$, and this proves (b).
    Now suppose (3) holds and we prove (1). Obviously, $\Fcal$ is locally
    free over $Y^{reg}$. For any smooth projective subvarieties $V \subset
    Y^{reg}$ of dimension $1$ or $2$, we know $\F|_{V}$ admits a
    filtration so that the graded factors are projectively flat over $V$.
    The conclusion thus follows from the known results in the smooth case.
    \end{proof}

We also have
\begin{thm}[{\sc Pseudo-effective}]
Suppose $\F$ is a stable reflexive sheaf over $Y$ so that 
$
    c_1(\F)\cdot [A]^{n-1}=0
$, 
for some ample Cartier divisor $A$ and $S^{[l]}\F$ are all indecomposable. If $\F$ is pseudo-effective, then $\F|_{Y^{reg}}$ is flat.  
\end{thm}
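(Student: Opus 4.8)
The plan is to manufacture an admissible Hermitian–Einstein metric on $\F$ and then force its curvature to vanish, by feeding the pseudo-effectivity hypothesis into the equality case of the Bogomolov–Gieseker inequality (Corollary~\ref{cor:bogomolov}). First I would normalize the polarization: since $A$ is ample, $mA$ is very ample for $m\gg 0$, so I may re-embed $Y\subset\P^N$ by $|mA|$ and take $\omega=\omega_{FS}|_{Y^{reg}}$, which represents a positive multiple of $A$; stability is unaffected and the hypothesis $c_1(\F)\cdot A^{n-1}=0$ becomes $\mu_\alpha(\F)=0$. Writing $r:=\rank(\F)$, Theorem~\ref{Singular DUY} then gives an admissible HE metric $H$, and by Corollary~\ref{cor2.7} the Einstein constant is $\mu=0$, i.e.\ $\sqrt{-1}\Lambda_\omega F_H=0$ on $Y^{reg}$. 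The goal is reduced to showing $F_H\equiv 0$.

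Second, I would show $\det\F$ is numerically trivial. Pseudo-effectivity of $\F$ is equivalent to pseudo-effectivity of the tautological class $\zeta$ on $P$, since a nonzero element of $H^0(S^{[m]}\F\otimes\O(jA))$ is the same as an effective divisor in $|m\zeta+j\pi^\ast A|$. Choosing, by Bertini and the Mehta–Ramanathan theorem, a generic complete-intersection surface $S\subset Y^{reg}$ on which $\F$ is locally free and $\F|_S$ is semistable with $\mu(\F|_S)=0$, and which avoids $Y^s$ and the support of $\Lambda$, the class $\zeta$ restricts to the tautological class of the honest bundle $\P(\F|_S)$ and remains pseudo-effective. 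Pushing forward, $\pi_\ast(\zeta^{r})=c_1(\F|_S)$ is a limit of effective divisors, hence pseudo-effective on $S$; as $c_1(\F|_S)\cdot(A|_S)=c_1(\F)\cdot A^{n-1}=0$, the Hodge index theorem gives $c_1(\F|_S)\equiv 0$, so $c_1(\F)^2\cdot A^{n-2}=0$. Combined with $c_1(\F)\cdot A^{n-1}=0$, a second application of the Hodge index theorem on $Y$ yields $c_1(\F)\equiv 0$; in particular $\zeta_\F\equiv\zeta$.

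Third—and this is where I expect the real difficulty to lie—I would upgrade pseudo-effectivity of $\zeta_\F$ to nef-ness. Once $c_1(\F)\equiv 0$, the class $\zeta_\F$ is pseudo-effective and restricts to $\O(1)$ on every fiber of $\pi$; the semistability of $\F$ with $\mu=0$, together with the indecomposability of all $S^{[l]}\F$ (which prevents the positivity from being concentrated in a proper summand), is meant to rule out the only obstruction to nef-ness, namely a quotient of negative slope emerging in the non-nef locus of the effective approximants of $\zeta_\F$. This is Nakayama's argument for the nef case (cf.\ the proof of Theorem~\ref{Theorem: nef}) adapted to the singular setting, and carrying it across $Y^s$ is the main obstacle. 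Granting nef-ness, the intersection number $\zeta_\F^{\,r+1}\cdot\pi^\ast A^{n-2}\ge 0$; by the Segre-class identity $\pi_\ast(\zeta_\F^{\,r+1})=c_1(\F)^2-c_2(\F)$, which equals $-c_2(\F)$ since $c_1(\F)\equiv 0$, this reads $c_2(\F)\cdot A^{n-2}\le 0$. As Corollary~\ref{cor:bogomolov} gives the reverse inequality, $(2rc_2(\F)-(r-1)c_1(\F)^2)\cdot A^{n-2}=0$, and the equality case of that corollary shows $\F|_{Y^{reg}}$ is projectively flat, i.e.\ $F_H=\tfrac1r(\tr F_H)\,\Id$.

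Finally, I would promote projective flatness to flatness. The metric $H$ induces on $\det\F$ an admissible metric with $\sqrt{-1}\Lambda_\omega F_{\det\F}=\tr(\sqrt{-1}\Lambda_\omega F_H)=0$, i.e.\ an admissible HE metric of slope $0$ on the rank-one reflexive sheaf $\det\F$. Since $c_1(\F)\equiv 0$, $\det\F$ lies in $\Pic^\tau(Y)$ and therefore also carries a flat metric, which is again an admissible HE metric of slope $0$; by the uniqueness statement in Theorem~\ref{Singular DUY} the two coincide up to scale, so the induced metric is flat and $\tr F_H=F_{\det\F}=0$. Together with projective flatness this gives $F_H=0$, so $\F|_{Y^{reg}}$ is flat. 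Throughout I would have to keep track of the singular analysis—admissibility across $Y^s$, the $L^\infty$ and $W^{1,2}$ regularity of sections as in Proposition~\ref{prop3.2}, and the legitimacy of the integrations by parts and pushforwards near $Y^s$—but the structural skeleton is as above.
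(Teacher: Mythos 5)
Your endgame (establish $c_1(\F)^2\cdot A^{n-2}=c_2(\F)\cdot A^{n-2}=0$, then invoke the equality case of Bogomolov--Gieseker through Theorem \ref{Theorem: nef} and finally kill $\tr F_H$) coincides with the paper's, but the two steps by which you try to reach the numerical vanishing both contain genuine gaps. The first is your pushforward claim: ``$\pi_*(\zeta^r)=c_1(\F|_S)$ is a limit of effective divisors, hence pseudo-effective.'' Self-intersections of pseudo-effective (even effective) divisor classes need not be pseudo-effective, and the claim fails exactly at the level of generality you are working at, namely for a merely $\mu$-semistable restriction $\F|_S$: on a smooth surface $S$ take a line bundle $L$ with $L\cdot H=0$ and $L\not\equiv 0$ (so $L^2<0$ by Hodge index) and set $E=\O_S\oplus L$. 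Then $E$ is $\mu$-semistable of slope zero and pseudo-effective in the sense of the paper's definition (each $\Sym^iE\otimes\O_S(jH)$ contains $\O_S(jH)$ as a direct summand, which has sections once $j$ is large), and its tautological class $\zeta$ is even effective since $H^0(\P(E),\O(1))\supset H^0(\O_S)\neq 0$; yet $\pi_*(\zeta^2)=c_1(E)=L$ is not pseudo-effective. So nothing in your second step forces $c_1(\F|_S)\equiv 0$. The second gap is your third step: pseudo-effective does not imply nef, and ``indecomposability prevents the positivity from being concentrated in a proper summand'' is not an argument --- it is precisely the hard statement that would require proof. Taken together, your steps two and three amount to an attempted re-proof, by soft intersection-theoretic manipulations, of the theorem of H\"oring--Peternell that the paper cites as a black box, and that theorem is genuinely delicate.

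The paper's proof runs differently and is where the indecomposability hypothesis actually enters: (i) by Theorem \ref{Singular DUY}, $\F$ carries an admissible HE metric, hence each $S^{[l]}\F=(\Sym^l\F)^{**}$ carries an induced admissible HE metric and is therefore polystable (cf.\ Corollary \ref{Admissible HYM implies polystability} and the polystability corollary of Theorem \ref{Singular DUY}); since $S^{[l]}\F$ is assumed indecomposable, it is \emph{stable} --- this is the step entirely missing from your proposal; (ii) with all $S^{[l]}\F$ stable, \cite[Thm.~1.1]{HoringPeternell:19} applies and yields $c_1(\F)^2\cdot H^{n-2}=c_2(\F)\cdot H^{n-2}=0$; (iii) Theorem \ref{Theorem: nef} then gives flatness of $\F|_{Y^{reg}}$. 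Note that stability of \emph{all} reflexive symmetric powers is exactly the hypothesis under which the numerical vanishing you want is known to hold (the example above shows semistability is not enough), so any repair of your argument must route through step (i); your normalization step and your final passage from projective flatness to flatness are fine, but they are not where the difficulty lies.
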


\begin{proof}
    By Theorem \ref{Singular DUY}, $\Fcal$ carries an admissible HE metric;
    hence, so does $S^{[l]}(\F)$.
    By Corollary \ref{cor:polystability} and the assumption of
    indecomposability, 
$S^{[l]}(\F)$ must be stable.
    By \cite[Thm.\ 1.1]{HoringPeternell:19} we know  that
$$
    c_1(\F)\cdot [H]^{n-1}=c_2(\F)\cdot [H]^{n-2}=0\ ,
$$
and 
by Theorem \ref{Theorem: nef}, we conclude that $\F|_{Y^{reg}}$ is flat. 
\end{proof}

\begin{rmk}
In \cite{HoringPeternell:19}, $S^{[l]}\F$ is assumed to be stable, and it is
    concluded that $c_1(\F)^2\cdot H^{n-2}=0$ and $c_2(\F)\cdot H^{n-2}=0$. Then by
    assuming $Y$ is klt and using \cite{GKP:16a}, the authors can pass to a finite
    Galois cover $\nu:\widetilde{Y}\rightarrow Y$  which is \'etale in
    codimension $1$,  and thence conclude that 
    the reflexive pullback of $\F$ is a numerically flat bundle.
\end{rmk}

\section{Degenerations of semistable bundles}\label{Section:degeneration}
In this section, we fix $\Xcal$ to be a subvariety of $\P^N \times \P^1$ so that 
$$p:\Xcal \lra \P^1=\C\cup \{\infty\}$$
is a family with $p^{-1}(t)$ smooth for $t\neq 0$ and $p^{-1}(0)$ normal. 
Denote $X_t=p^{-1}(t)$. 
Let $\{\Ecal_t\}_{t\neq 0}$ be a family of semistable bundles. 
We say that  $\{\Ecal_t\}_{t\neq0}$ is an \emph{algebraic family} if we can 
find a coherent sheaf $\E\to\Xcal$ so that $\E|_{X_t}\cong\E_t$ for all
$t\neq 0$.
In the following we shall always consider such algebraic families.
\subsection{The algebraic side}
We use the hyperplane section $D\subset \PBbb^N$ to define stability of
sheaves on the  fibers  of $\pi:\Xcal\to\PBbb^1$.
\begin{defi}\label{defi4.1}
A coherent sheaf $\E\to\Xcal$ is called an algebraic degeneration of 
    $\{\E_t\}_{t\neq0}$ if $\Ecal$ is flat over $\PBbb^1$, $\E|_{X_t}\cong\E_t$ for $t\neq 0$, and
    $\E_0:=\E|_{X_0}$ is torsion-free. $\E$ is called a semistable degeneration of $\{\E_t\}_{t\neq0}$ if furthermore $\E_0$ is torsion-free and semistable. 
\end{defi}
We need the following simple observation following from the flatness
assumption.
\begin{lem}\label{Lemma: Same determinant}
For different semistable degenerations $\E'$ and $\E$ of the same family $\{\E_t\}_{t\neq0}$, 
$\det \E'_0 \cong \det \E_0$.
\end{lem}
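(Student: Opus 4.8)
The plan is to compare the two determinant line bundles on the total space $\Xcal$ and to show that they agree after restriction to the central fibre. Write $r=\rank\E$, and recall that since $\E_0$ and $\E'_0$ are torsion free over the normal variety $X_0$, the sheaves $\det\E_0=(\wedge^r\E_0)^{**}$ and $\det\E'_0=(\wedge^r\E'_0)^{**}$ are rank one reflexive, hence are determined by their restrictions to the smooth locus $X_0^{reg}$. Thus it suffices to produce an isomorphism of the associated divisorial line bundles over $X_0^{reg}$.

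First I would set up the determinant on the total space. Because $p$ is flat over the smooth curve $\P^1$ with every fibre normal (smooth for $t\neq 0$, normal for $t=0$), the total space $\Xcal$ is normal, and in fact $\Sing\Xcal=\Sing X_0$ has codimension $\geq 3$ in $\Xcal$. On the smooth open set $\Xcal^\circ=\Xcal\setminus\Sing X_0$, which contains every fibre $X_t$ with $t\neq 0$ together with $X_0^{reg}$, the flat sheaves $\E$ and $\E'$ admit finite locally free resolutions, so $\det\E$ and $\det\E'$ are genuine line bundles whose formation commutes with restriction to fibres. Hence $L:=\det\E\otimes(\det\E')^{-1}$ is a line bundle on $\Xcal^\circ$, and for every $t\neq 0$ the fibrewise isomorphism $\E|_{X_t}\cong\E_t\cong\E'|_{X_t}$ gives $L|_{X_t}\cong\det\E_t\otimes(\det\E_t)^{-1}\cong\O_{X_t}$.

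It remains to deduce $L|_{X_0^{reg}}\cong\O$. Since $\Sing X_0$ has codimension $\geq 2$ in $X_0$ and $\geq 3$ in $\Xcal$, the bundle $L$ extends uniquely to a rank one reflexive sheaf $\overline L$ on the normal projective variety $\Xcal$; being torsion free over the integral dominating family it is flat over $\P^1$, with $\overline L|_{X_t}\cong\O_{X_t}$ for $t\neq 0$. Applying upper semicontinuity of cohomology to $\overline L$ and to $\overline L^{-1}$, and using $H^0(X_t,\O)=\C$ on the connected fibres, both $\overline L|_{X_0}$ and $\overline L^{-1}|_{X_0}$ acquire nonzero sections; on the connected normal projective variety $X_0$ this forces the associated divisor class to be trivial, so $\overline L|_{X_0}\cong\O_{X_0}$ and therefore $L|_{X_0^{reg}}\cong\O$, which with the first paragraph yields $\det\E_0\cong\det\E'_0$. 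In the Langton setting, where $\E$ and $\E'$ actually coincide away from $X_0$, one can argue more directly: $L$ is then trivial on $\Xcal^\circ\setminus X_0$, so by the localization sequence for the Weil divisor $X_0$ it equals $\O(mX_0)=p^*\O_{\P^1}(m[0])$ for some $m$, which restricts trivially to any fibre.

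The only genuine subtlety, and the step I would be most careful about, is the behaviour of the determinant across the singular central fibre: one must verify that $(\det\E)|_{X_0}$ and $\det\E_0$ agree as divisorial sheaves on $X_0$, and that the reflexive extension $\overline L$ is flat over $\P^1$ so that semicontinuity genuinely applies. Both reduce to computations on the big open set $\Xcal^\circ$, where everything is smooth, together with the standard fact that a rank one reflexive sheaf on a normal variety is recovered from its restriction to any open subset whose complement has codimension $\geq 2$. Away from this point the argument is entirely formal and follows, as claimed, from flatness.
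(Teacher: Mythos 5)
Your argument is sound and in fact proves more than the paper records: the paper states this lemma with no proof at all, presenting it as "a simple observation following from the flatness assumption," and the argument it implicitly relies on is surely your closing remark — in the Langton-type construction the degenerations are Hecke transforms of a fixed reflexive sheaf, so $\E$ and $\E'$ literally agree off $X_0$, whence $\det\E\otimes(\det\E')^{-1}$ is a divisorial sheaf trivial off the irreducible Cartier divisor $X_0$, hence of the form $\O(mX_0)=p^*\O_{\P^1}(m[0])$, which restricts trivially to the central fiber. Your main argument via the reflexive extension $\overline L$ and semicontinuity is the right way to handle the full generality of Definition \ref{defi4.1}, in which $\E$ and $\E'$ are only assumed fiberwise isomorphic for $t\neq 0$ rather than isomorphic over $\Xcal\setminus X_0$, and its outline is correct.

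One inference does need patching. From the facts that $\overline L|_{X_0}$ and $\overline L^{-1}|_{X_0}$ acquire nonzero sections you conclude that the divisor class on $X_0$ is trivial; this requires that those sections do not vanish identically on $X_0^{reg}$, i.e., that they are not torsion sections supported on $\Sing X_0$. A priori the restriction of a coherent sheaf to the Cartier divisor $X_0$ can carry exactly such torsion, and then the product $s\otimes s'$ would tell you nothing. This is precisely what the paper's Lemma \ref{lem3.4} rules out: since $\overline L$ is reflexive on $\Xcal$, it admits local presentations $0\to \overline L|_U\to \O_U^{\oplus n_1}\to \O_U^{\oplus n_2}$, and the $\mathrm{Tor}_1$ argument there shows $\overline L|_{X_0}$ embeds in $\O_{X_0}^{\oplus n_1}$, hence is torsion-free. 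Granting that, any nonzero section is generically nonzero on the irreducible $X_0$, $s\otimes s'$ is a nonzero constant by normality and projectivity, and your conclusion $L|_{X_0^{reg}}\cong\O$ follows. Two further cosmetic points: $\Sing\Xcal=\Sing X_0$ should read $\Sing\Xcal\subseteq\Sing X_0$ (the total space can be smooth along the singular locus of the central fiber, as for a smoothing of a node), though you only use smoothness of $\Xcal\setminus\Sing X_0$, which is correct; and the semicontinuity step should be applied to $\overline L$ and to the reflexive hull of its dual separately, as you do, since $(\overline L|_{X_0})^{-1}$ and $\overline L^{-1}|_{X_0}$ need not agree off $X_0^{reg}$ — but again only their restrictions to $X_0^{reg}$ enter the argument.
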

To state the main results of this section, we make the following
definition. Given a semistable sheaf $\F$ over a projective normal variety
$Y\subset \P^N$ with a fixed polarization. Let $\Gr(\F)$ be the
graded sheaf associated to a Jordan-H\"older filtration of $\F$.
Assume that $Y$ is smooth in codimension $2$. Let
$\Tcal=\Gr(\F)^{**}/\Gr(\F)$. Then $\dim \supp(\Tcal) \leq n-2$. 
\begin{defi}[Lemma]\label{Definition:algebraic blow-up locus}
The algebraic blow-up locus of $\F$ is defined as  
\begin{equation}
\mathcal C(\F)=\sum m_k^{alg} \Sigma_k \ ,
\end{equation}
where $\Sigma_k$ are irreducible codimension $2$ subvarieties of $Y$ and $m_k^{alg}$ is equal to $h^0(D, \mathcal{T}|_{D})$ where $D$ is a generic transverse slice of $\Sigma_k$. In particular, $\mathcal{C} (\E)$ is a finite sum.
\end{defi}

We refer the readers for detailed proof in Section $2.5$ in \cite{GSTW:18},
which is done for smooth projective varieties but can be seen to work here.

\subsubsection{Hecke transform and existence of semistable degenerations}
In this section, we will prove the first part of Theorem \ref{Theorem: algebraic degeneration}, i.e.\ a
semistable degeneration always exists which is a slight generalization of
\cite{Langton:75} using the same argument outlined in \cite[p.\ 59]{HuybrechtsLehn:10}, where  the
completeness of the moduli space of semistable sheaves over a fixed
projective manifold is studied. For the interest of the reader, the extension of a sheaf
across a negative divisor is also studied in \cite{ChenSun:20b}. 

We fix $\E$ to be any reflexive sheaf over $\Xcal$ so that  
$\E|_{X_t}=\E_t$ for $t\neq 0$.  This always exists by assumption.  Since $\E$ is torsion-free, we know $\E$ always defines a flat family on $\Xcal$ over $\P^1$.  We start with the following  observation which generalizes \cite[Lemma 3.23]{ChenSun:20a} by the same argument.   

\begin{lem}\label{lem3.4}
$\E_0$ is torsion-free.  
\end{lem}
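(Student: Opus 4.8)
The plan is to reduce the statement to a purely local computation of depth, exploiting the fact that the restriction of a reflexive sheaf to a Cartier divisor satisfies one weaker Serre condition. First I would record the two structural inputs. Since $\E$ is reflexive it is in particular torsion-free, and as already observed this makes $\E$ a flat family over $\P^1$; equivalently, writing $t$ for a local coordinate on $\P^1$ at $0$, the function $t$ is a nonzerodivisor on $\E$ and $\E_0=\E/t\E$. Because the conormal sheaf of $X_0$ in $\Xcal$ is trivial, $X_0=\{t=0\}$ is a Cartier divisor cut out locally by the single function $t$, which is exactly the feature that will let the geometry of $X_0$ control $\Xcal$.

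Next I would upgrade normality of the central fiber to normality of the total space in a neighborhood of $X_0$. The relevant commutative-algebra fact is that if $(A,\m)$ is Noetherian local, $t\in\m$ is a nonzerodivisor, and $A/tA$ is normal, then $A$ is normal, since normality $=R_1+S_2$ lifts across a nonzerodivisor. Applying this at each point of $X_0$ shows $\Xcal$ is normal near $X_0$, which is all that matters for the sheaf $\E_0$. Consequently $\E$, being reflexive on a normal variety, satisfies Serre's condition $S_2$, i.e.\ $\operatorname{depth}_{\O_{\Xcal,x}}\E_x\geq\min(2,\dim\O_{\Xcal,x})$ for every $x$.

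Then comes the depth bookkeeping, which I expect to be routine. Let $x\in X_0$ be any point that is not a generic point of $X_0$. Since $X_0$ is Cartier we have $\dim\O_{\Xcal,x}=\codim_{X_0}(x)+1\geq 2$, so $\operatorname{depth}\E_x\geq 2$. As $t$ is $\E_x$-regular, cutting by $t$ drops depth by exactly one, giving $\operatorname{depth}_{\O_{X_0,x}}(\E_0)_x=\operatorname{depth}\E_x-1\geq 1$. In particular the maximal ideal at $x$ is not associated to $(\E_0)_x$, so $x\notin\operatorname{Ass}(\E_0)$. Hence the only associated points of $\E_0$ are the generic points of the (pairwise disjoint, since $X_0$ is normal) irreducible components of $X_0$; equivalently $\E_0$ is $S_1$, i.e.\ torsion-free.

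The main obstacle is the second step: the hypothesis is stated as reflexivity of $\E$, but the argument needs it in the form of an $S_2$ statement, and this transfer is legitimate only once we know $\Xcal$ is normal near $X_0$ rather than merely that $X_0$ is normal. This is precisely where the triviality of the conormal sheaf is essential, since it presents $X_0$ as cut out by a single nonzerodivisor and thereby propagates normality from $X_0$ to $\Xcal$. Once this is secured, the remaining depth count and the transfer of Serre's condition across the nonzerodivisor $t$ are standard, and the conclusion follows at once; this is the argument of \cite[Lemma 3.23]{ChenSun:20a} adapted to the present family.
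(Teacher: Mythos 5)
Your proof is correct --- every fact you invoke does hold in this setting --- but it is genuinely different from the paper's argument. The paper (following Lemma 3.23 of Chen--Sun) argues homologically: reflexivity gives a local presentation $0\to\E|_U\to\O_U^{\oplus n_1}\xrightarrow{\phi}\O_U^{\oplus n_2}$, and restricting to $D=U\cap X_0$ yields an exact sequence
$$
\operatorname{Tor}_1(\O_D,\operatorname{Im}\phi)\lra \E|_D\lra \O_D^{\oplus n_1}\ ;
$$
since $\operatorname{Im}\phi$ is torsion-free and $\O_D$ has the length-one free resolution $0\to\O_U\xrightarrow{\ t\ }\O_U\to\O_D\to 0$, the Tor term vanishes, so $\E_0$ embeds into $\O_D^{\oplus n_1}$ and is therefore torsion-free. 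Note that this never uses normality of $X_0$ at all: it needs only that $t$ is a nonzerodivisor on $\O_\Xcal$. Your route instead passes through normality of the total space: the Cartier property of $X_0$, plus the classical ascent theorem (a flat local homomorphism from a DVR whose closed fiber is normal has normal source; see, e.g., Matsumura, \emph{Commutative Ring Theory}, \S 23, or EGA IV$_2$, \S 6.5), makes $\Xcal$ normal near $X_0$; reflexivity then gives $S_2$ for $\E$ (duals are second syzygies), and the depth count $\operatorname{depth}(\E_0)_x=\operatorname{depth}\E_x-1\geq 1$ at non-generic $x$ excludes all non-minimal associated points. What the paper's argument buys is economy and generality: it is elementary, local, and proves the lemma under weaker hypotheses on $X_0$. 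What yours buys is stronger structural output --- normality of $\Xcal$ in a neighborhood of $X_0$ and the $S_2$ bound on $\E$ --- which is of independent interest, at the cost of heavier machinery.

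Three caveats. First, your one-line justification that ``normality $=R_1+S_2$ lifts across a nonzerodivisor'' is too breezy: this is a genuinely nontrivial classical theorem (the $R_1$ part at primes whose closure meets $V(t)$ only at the closed point needs either catenarity/excellence --- harmless here --- or the flat-over-a-DVR formulation), so it should be cited, not asserted; you do correctly flag it as the crux, which is why this is a citation gap rather than a mathematical one. Second, the closing identification ``$S_1$, i.e.\ torsion-free'' is imprecise: $S_1$ by itself does not exclude torsion supported on proper closed subsets. What your depth argument actually establishes is $\operatorname{Ass}(\E_0)\subset\{\text{generic points of }X_0\}$, and that is exactly torsion-freeness, so the proof stands but the label does not. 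Third, your final attribution is off: the Chen--Sun lemma you cite (and the paper's proof) is the Tor computation above, not this depth argument.
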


\begin{proof}
Fix any $x\in X_0$. By the  assumptions, we can represent $\mathcal E$ near $x$ as 
\begin{equation}\label{equation4.1}
0\lra \mathcal E|_U \lra \O_U^{\oplus n_1}
    \xrightarrow{\phi} \O_U^{\oplus n_2}\ .
\end{equation}
where $U$ is an open neighborhood of $x$ in $\Xcal$. Restricting this exact sequence to $D=U \cap X_0$, one gets an exact sequence 
$$
\text{Tor}_1(\mathcal{O}_D, \text{Im}(\phi)) \lra \E|_D \lra
    \O^{\oplus n_1}_D\ .
$$
Since $\text{Im}(\phi)$ is torsion-free, by 
    using the natural resolution for $\mathcal{O}_D$ we have 
$
\text{Tor}_1(\mathcal{O}_D, \text{Im}(\phi))=0
$.
 This implies 
$\E|_D \hookrightarrow
    \O^{\oplus n_1}_D$. In particular, $\E|_D$ is torsion-free. 
\end{proof}

Given any saturated subsheaf $\underline{\mathcal S}\subset \E_0$, i.e. $\E_0/\underline{\mathcal S}$ is torsion-free. Let $\E'$ be the sheaf given by the following exact sequence 
$$
0\lra \E'\lra \E\lra (\iota_{0})_* (\E_0/\underline{\mathcal S}) \lra 0 \ .
$$
\begin{defi}
$\E'$ is called a Hecke transform of $\E$ along $\underline{\mathcal S}$.  
\end{defi} 

By repeating exactly the same argument as \cite[Lemma 2.4]{ChenSun:20b}, we have 
\begin{lem}\label{lem3.6}
$\E'$ is reflexive and $\E'_{0}$ is torsion-free. Furthermore, there exists a short exact sequence 
$$
0 \lra\E_0/\underline {\mathcal S} \lra \E'_0 \lra \underline{\mathcal S}
    \lra 0\ .
$$
\end{lem}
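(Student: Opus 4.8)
The plan is to treat the two assertions separately: the short exact sequence is formal once multiplication by $t$ is understood, while reflexivity of $\E'$ I would extract from a bidual comparison. Throughout I use the observation recorded above that the central fibre $X_0\subset\Xcal$ has trivial conormal sheaf: locally it is the Cartier divisor $\{t=0\}$ and $t$ is a nonzerodivisor on $\Xcal$, hence on every torsion-free sheaf. In particular multiplication by $t$ gives $0\to\E\xrightarrow{t}\E\to\E_0\to 0$, so $\E_0=\E/t\E$. By construction $\E'$ is the preimage of $\underline{\mathcal S}\subset\E_0$ under $\E\to\E_0$; thus $t\E\subseteq\E'\subseteq\E$ with
$$
\E'/t\E=\underline{\mathcal S}\ ,\qquad \E/\E'=\E_0/\underline{\mathcal S}\ .
$$

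For the short exact sequence I would use the chain $t\E'\subseteq t\E\subseteq\E'$ and compute $\E'_0=\E'/t\E'$. The subsheaf $t\E/t\E'\subseteq\E'/t\E'$ has quotient $\E'/t\E=\underline{\mathcal S}$, while multiplication by $t$, being injective, induces an isomorphism $\E/\E'\xrightarrow{\sim}t\E/t\E'$. Combining these with $\E/\E'=\E_0/\underline{\mathcal S}$ gives
$$
0\lra\E_0/\underline{\mathcal S}\lra\E'_0\lra\underline{\mathcal S}\lra 0\ ,
$$
which is the second assertion. This step is purely formal once $t$ is known to be a nonzerodivisor.

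The substance is the reflexivity of $\E'$. Since $\E'\subseteq\E$ it is torsion-free, so the canonical map gives $\E'\subseteq(\E')^{**}$, and functoriality of the bidual together with $\E=\E^{**}$ yields a factorization $(\E')^{**}\hookrightarrow\E^{**}=\E$ compatible with $\E'\hookrightarrow\E$ (the map is injective because $(\E')^{**}$ is torsion-free and the map is generically injective). Hence $(\E')^{**}/\E'\hookrightarrow\E/\E'=\E_0/\underline{\mathcal S}$. Now $(\E')^{**}/\E'$ is supported in codimension $\geq 2$ in $\Xcal$: over the codimension-one points of $\Xcal$ the local rings are discrete valuation rings, where torsion-free sheaves are free and so agree with their biduals. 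On the other hand $\E_0/\underline{\mathcal S}$ is torsion-free as a sheaf on the divisor $X_0$, which is exactly where the saturation hypothesis on $\underline{\mathcal S}$ enters. A subsheaf of a torsion-free $\Ocal_{X_0}$-module supported in dimension $\leq\dim X_0-1$ is a torsion submodule, hence vanishes; therefore $(\E')^{**}=\E'$ and $\E'$ is reflexive. Finally, $\E'|_{X_0}$ torsion-free follows by applying Lemma \ref{lem3.4} to the reflexive sheaf $\E'$.

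The main obstacle is the codimension bookkeeping in the previous paragraph, which rests on knowing that the total space $\Xcal$ is regular in codimension one, so that the bidual comparison is valid at codimension-one points. This is where the trivial conormal observation is used a second time: since $X_0$ is cut out by the nonzerodivisor $t$ and is itself normal, any point at which $X_0$ is regular is a regular point of $\Xcal$, so $\Sing(\Xcal)\subseteq\Sing(X_0)$ and hence $\codim\big(\Sing(\Xcal),\Xcal\big)\geq 3$. With this in hand the dimension comparison is rigorous, and no analytic input is required; the only genuinely nontrivial facts invoked are the standard local behaviour of torsion-free sheaves at codimension-one points and the descent of regularity across the regular Cartier divisor $X_0$.
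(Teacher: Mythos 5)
Your proof is correct and follows essentially the same route as the paper's: reflexivity via the bidual injection $(\E')^{**}\hookrightarrow\E^{**}=\E$, so that $(\E')^{**}/\E'$ embeds into the torsion-free $\O_{X_0}$-module $\E_0/\underline{\mathcal S}$ and must vanish, and the exact sequence by restricting to $X_0$ and using the trivial conormal sheaf --- your explicit filtration $t\E'\subseteq t\E\subseteq\E'$ with $t\E/t\E'\cong\E/\E'$ is exactly the paper's term $\E_0/\underline{\mathcal S}\otimes\mathcal I_0/\mathcal I_0^2$ trivialized by the function $t$. The only difference is that you make explicit some bookkeeping the paper defers to the cited lemma of Chen--Sun, namely that $\Sing(\Xcal)\subseteq\Sing(X_0)$ forces $\Xcal$ to be regular in codimension one, so $(\E')^{**}/\E'$ is supported in codimension $\geq 2$ and is therefore torsion as an $\O_{X_0}$-module; this is a worthwhile clarification rather than a new method.
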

Now we start with $\E$ as above. We define a sequence of $\Ecal^k$ inductively by defining $\Ecal^1=\Ecal$ and $\Ecal^{k+1}$ to be the Hecke transform of $\Ecal^{k}$ along the maximal destabilizing subsheaf $\underline{\F}_{k}$ of $\E^k_0$. For each $\E^{k}$, we can associate it with a nonnegative number
$$
\beta_k:=\mu(\underline{\F}_k)-\mu(\E^{k}_0)=\mu(\underline{\F}_k)-\mu(\Ecal_0)
$$
which measures how far $\E^{k}$ is from being semistable. Note that $\beta_k=0$ if and only if 
$\E^{k}$ is a semistable degeneration of $\{\E_t\}_{t\neq 0}$. In the following, we denote 
$$
\underline{\Gcal}_k=\Ecal^{k}_0/\underline{\F}_k.
$$
By definition, we have the following exact sequence 
\begin{equation}
0\lra\underline{\F}_{k-1}\lra\Ecal_0^{k-1} \lra\underline{\Gcal}_{k-1} \lra
    0\ ,
\end{equation}
and by Lemma \ref{lem3.6}, we have 
\begin{equation}\label{equation4.6}
0\lra\underline{\G}_{k-1} \lra\Ecal^{k}_0 \lra\underline{\Fcal}_{k-1}
    \lra0\ .
\end{equation}

\begin{lem}\label{lem4.7}
 $\beta_k$ decreases and for $k$ large, the following hold:
    \begin{itemize}
        \item $\beta_k=\beta_{k-1}$; 
        \item there are natural isomorphisms $\underline{\F}_{k} \cong \underline{\F}_{k-1}$ and $\underline{\Gcal}_k\cong\underline{\Gcal}_{k-1}$. In particular, $\E_0^{k}=\underline{\Fcal}_k \oplus \underline{\Gcal}_k$.
    \end{itemize}
\end{lem}

\begin{proof}
It follows from \eqref{equation4.6} that $\mu(\underline{\F}_{k}) \leq \mu(\underline{\F}_{k-1})$
    since through the last map in the exact sequence, one has a map
    $\underline{\F}_{k} \rightarrow \underline{\F}_{k-1}$. Indeed, this
    follows from $\underline{\Fcal}_{k-1}$ being semistable if the map is
    nontrivial. Otherwise, we have a nontrivial map $\underline{\Fcal}_{k}
    \rightarrow \underline{\Gcal}_{k-1}$ and the maximal destabilizing sheaf of $\underline{\Gcal}_{k-1}$ has slope strictly smaller than $\underline{\Fcal}_{k-1}$.   In particular, we know $\beta_k$ decreases as $k$ increases, thus must stabilize for $k$ large since it is nonnegative. Now we assume the equality holds $\beta_k=\beta_{k-1}$ i.e. $\mu(\underline{\Fcal}_k)=\mu(\underline{\Fcal}_{k-1})$. Then we must have an injective map $\underline{\Fcal}_{k} \rightarrow \underline{\Fcal}_{k-1}$. Otherwise, we have an exact sequence
    $$
    0\lra\underline{\Gcal}'_k \lra\underline{\Fcal}_{k} \lra\underline{\Fcal}_{k-1}
    $$
    where $\underline{\Gcal}'_k \subset \underline{\Gcal}_k$. This implies $\mu(\underline{\Fcal}_k)< \mu(\underline{\Fcal}_{k-1})$, which is a contradiction. Given this, if the equality holds, we have the following exact sequence  by the sequence (\ref{equation4.6}) 
    $$
    0\lra\underline{\G}_{k-1}
    \lra\underline{\Gcal}_k=\Ecal^{k}_0/\underline{\Fcal}_k
    \lra\underline{\Fcal}_{k-1}/\underline{\Fcal}_k \lra 0 \ ,
    $$
    thus an injective map 
    $
    \underline{\Gcal}_{k-1} \rightarrow \underline{\Gcal}_k
    $
    for $k$ large. Since
    $\mu(\underline{\Gcal}_{k-1})=\mu(\underline{\Gcal}_{k})$, the
    inclusion map must induce an isomorphism in codimension $1$, thus $\underline{\Gcal}_k^{**}$ are the same for $k$ large. This then implies $\underline{\Gcal}_{k-1} \cong \underline{\Gcal}_{k}$ for $k$ large.  Thus $\underline{\Fcal}_{k} \cong\underline{\Fcal}_{k-1}$.
\end{proof}

 Now we are ready to prove the main results by following the argument
 in \cite[p.\ 59]{HuybrechtsLehn:10}.

\begin{prop}
For $k$ larger, $\E^{k}$
    is a semistable
    degeneration of $\{\E_t\}_{t}$ for $k$ sufficiently large. 
\end{prop}

\begin{proof}
    Otherwise, by replacing $\Ecal$ with $\Ecal^{k_0}$ for some $k_0$ large, we assume all the conditions in Lemma \ref{lem4.7} hold below. Denote $\underline{\F}=\underline{\F}_k$ and $\underline{\Gcal}=\underline{\Gcal}_k$. Let $R$ be the complete DVR associated to the point  $0\in
    \P^1$, and  let $\Xcal_R$ be the fiber over $\text{Spec}(R)$. Denote $z$ to be the generator of the maximal ideal of $R$ and the fibration still as $p: \Xcal_R \rightarrow \text{Spec}(R)$. Abusing notation, we continue to denote by
    $\E^{k}$ the restriction of $\E^{k}$ to $\Xcal_R$. Then $\E^{k}$
    is semistable on the generic fiber of $\Xcal_R\to\text{Spec}(R)$. Let $\Qcal^k=\Ecal/\Ecal^k$. Then $\Qcal^k|_{X_0}\cong \Gcal$ and by definitions there are short exact sequence 
    $$
    0\lra\iota_*\underline{\Gcal} \lra\Qcal^{k+1} \lra\Qcal^k \lra 0 \ ,
    $$
where $\iota: X_0 \rightarrow \Xcal_R$ denote the natural embedding. From
    this, we conclude that $\Qcal^k$ is an $R/(z^k)$-flat quotient of
    $\Ecal/(z^k)\Ecal$ by \cite[Lemma 2.1.3]{HuybrechtsLehn:10}. In
    particular,  $\Spec{R/(z^k)} \in \imag (\pi)$ where $\pi:
    \Quot_{\Xcal_R/R}(\Ecal, P(\underline{\Gcal})) \rightarrow \Spec{R}$,
    thus $\pi$ must be surjective and this gives a flat quotient  $\Ecal
    \rightarrow \Gcal$ in the quot scheme $\Quot_{\Xcal_R/R}(\Ecal,
    P(\underline{\Gcal}))$ which destabilizes the sheaf on the generic fiber. Contradiction.
\end{proof}

\subsubsection{Uniqueness of $Gr^{**}$}\label{Double dual}
The uniqueness of the double dual of the limiting sheaf in  Theorem \ref{Theorem: algebraic degeneration}
 is a direct corollary of the following two lemmas.

\begin{lem}\label{lem5.8}
Given any two semistable degenerations $\E$ and $\E'$ of $\{\E_t\}_t$, for $l$ large, for generic family of curves $\mathcal{Y} \subset \Xcal$ defined by $\mathcal{O}(l)$, denote $Y_t=p^{-1}(t) \cap \mathcal{Y}$, then $\E|_{Y_0}$ and $\E'|_{Y_0}$ are both semistable vector bundles for $t\neq 0$. In particular, $\E|_{Y_0}$ and $\E'|_{Y_0}$ are $S$-equivalent. 
\end{lem}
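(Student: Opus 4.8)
The plan is to restrict everything to a generic complete-intersection family of curves and then invoke the separatedness of the relative moduli space of semistable sheaves. First I would choose $\mathcal{Y}=D_1\cap\cdots\cap D_{n-1}\cap\Xcal$, where $n=\dim X_t$ and the $D_i$ are generic members of $|\O_{\Xcal}(l)|$ arranged to form a regular sequence; this guarantees that $p:\mathcal{Y}\to\P^1$ is flat with fibers $Y_t$ smooth curves, and that $\E|_{\mathcal{Y}}$ and $\E'|_{\mathcal{Y}}$ stay flat over $\P^1$. For $l$ large the restriction map $H^0(\Xcal,\O(l))\to H^0(X_0,\O(l))$ is surjective by Serre vanishing, so a generic $\mathcal{Y}\subset\Xcal$ cuts out a generic complete-intersection curve $Y_0\subset X_0$. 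In particular $Y_0$ can be taken inside $X_0^{reg}\setminus(\Sing(\E_0)\cup\Sing(\E'_0))$, whose complement has codimension $\geq 2$, so that $\E|_{Y_0}$ and $\E'|_{Y_0}$ are genuine vector bundles on the smooth curve $Y_0$.

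With this choice, the Mehta-Ramanathan restriction theorem (in the form valid on the regular locus of a normal variety, already used in the proof of Theorem \ref{Singular DUY}) applied to the semistable sheaves $\E_0,\E'_0$ on $X_0$ and to the semistable bundles $\E_t=\E|_{X_t}$ for $t\neq0$ shows that, for $l$ large and generic $\mathcal{Y}$, all the restrictions $\E|_{Y_t},\E'|_{Y_t}$ are semistable; this is the first assertion. The care needed here is that a single generic $\mathcal{Y}$ must work simultaneously for the finitely many sheaves involved and for both the central and the nearby fibers. This is fine because each requirement is open and dense and the restriction of sections from $\Xcal$ to $X_0$ is generic, but it is the step demanding the most bookkeeping.

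For the $S$-equivalence, note that $\E$ and $\E'$ agree away from the central fiber, so $\E|_{Y_t}\cong\E'|_{Y_t}\cong\E_t|_{Y_t}$ for every $t\neq0$. Let $\mathcal{M}\to\P^1$ be the relative moduli space of semistable sheaves on $\mathcal{Y}\to\P^1$, which is projective, hence separated, over $\P^1$. The flat families $\E|_{\mathcal{Y}}$ and $\E'|_{\mathcal{Y}}$ induce classifying morphisms from a neighborhood of $0$ to $\mathcal{M}$ that coincide over $\P^1\setminus\{0\}$; equivalently, passing to the DVR $R$ at $0$ as in the previous proof, the two families restrict to the same semistable sheaf on the generic fiber of $\mathcal{Y}_R\to\Spec R$ and have semistable special fibers. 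By the valuative criterion of separatedness the two morphisms agree at $0$, i.e. $[\E|_{Y_0}]=[\E'|_{Y_0}]$ in $\mathcal{M}_0$, which is precisely the statement that $\E|_{Y_0}$ and $\E'|_{Y_0}$ are $S$-equivalent. The main obstacle is the simultaneous genericity step of the first two paragraphs on the possibly singular normal central fiber, together with verifying that $Y_0$ avoids the codimension $\geq 2$ bad loci; the concluding $S$-equivalence is then a formal consequence of the classical separatedness (uniqueness-of-limit up to $S$-equivalence) underlying the work of Seshadri and Langton.
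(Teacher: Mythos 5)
Your proposal is correct and follows essentially the same route as the paper: the first assertion via the Mehta--Ramanathan restriction theorem applied on the generic complete-intersection family, and the $S$-equivalence via the relative moduli space of semistable sheaves on the family of curves over $\P^1$ (the paper cites \cite[Thm.\ 4.3.7]{HuybrechtsLehn:10}), whose projectivity/separatedness forces the two limits, which agree away from $t=0$, to give the same point of the central moduli space. Your write-up simply makes explicit the genericity bookkeeping and the separatedness argument that the paper leaves implicit.
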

\begin{proof}
The first part is a direct corollary of the MR restriction theorem. Now we also know $\E|_{\mathcal Y_t}$ and $\E'|_{\mathcal Y_t}$ is
    semistable for $t\neq 0$. The second conclusion follows from the
   geometry of a relative version of  the moduli space of semistable bundles
    over curves (see  \cite[Thm.\ 4.3.7]{HuybrechtsLehn:10}).
\end{proof}

Also we need the following well-known lemma.
\begin{lem}\label{lem5.9}
$\Gr(\E_0)^{**}=\Gr(\E'_0 )^{**}$  if and only if $\E_0|_{Y}$ and
    $\E'_0|_{Y}$ are $S$-equivalent for generic curves $Y$ given as a 
    complete intersection of hypersurfaces in 
    $\P(H^0(X_0, \O(l))^{*})$, for $l$ large.
\end{lem}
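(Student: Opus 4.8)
The plan is to prove both implications by reducing, via the refined Mehta--Ramanathan restriction theorem, to a statement about polystable reflexive sheaves and their restrictions to generic high degree complete intersection curves. Write $P=\Gr(\E_0)^{**}$ and $P'=\Gr(\E'_0)^{**}$; these are polystable reflexive sheaves over $X_0$ of the same slope, and we may decompose them into stable reflexive summands $P=\bigoplus_i G_i$ and $P'=\bigoplus_j G'_j$. The first input is that for $l$ large and $Y$ generic, $Y$ avoids the singular loci of $X_0$, $\E_0$, $\E'_0$ and of all the $G_i,G'_j$ (each of codimension $\geq 2$), so that restriction to $Y$ commutes with double dual and with $\mathcal{H}om$; moreover the stable factors restrict to stable bundles of equal slope and the Jordan--H\"older filtration of $\E_0$ restricts to one of $\E_0|_Y$, whence
\[
\Gr(\E_0|_Y)=P|_Y=\bigoplus_i G_i|_Y,\qquad \Gr(\E'_0|_Y)=P'|_Y=\bigoplus_j G'_j|_Y .
\]
In particular $\E_0|_Y$ and $\E'_0|_Y$ are $S$-equivalent if and only if $P|_Y\cong P'|_Y$ as polystable bundles on $Y$.

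Given this reformulation, the forward implication is immediate: if $P\cong P'$ then $P|_Y\cong P'|_Y$ for every such $Y$, so the restrictions are $S$-equivalent. The substance is the converse, for which I would establish two facts. First, a restriction isomorphism on global homomorphisms: for the reflexive sheaf $\mathcal{F}=\mathcal{H}om(P,P')$ and for $Y$ a generic complete intersection of hypersurfaces of degree $l\gg 0$, the restriction map
\[
\Hom(P,P')=H^0(X_0,\mathcal{F})\xrightarrow{\ \sim\ }H^0(Y,\mathcal{F}|_Y)=\Hom(P|_Y,P'|_Y)
\]
is an isomorphism; this follows by tensoring the Koszul resolution of $Y\subset X_0$ with $\mathcal{F}$ and applying Serre vanishing to kill the intermediate cohomology of the negative twists for $l$ large. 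Second, a lifting/rigidity step: an isomorphism $\varphi\colon P|_Y\xrightarrow{\sim}P'|_Y$ lifts, by the displayed isomorphism, to some $\psi\in\Hom(P,P')$ with $\psi|_Y=\varphi$. Since $\det\E_0\cong\det\E'_0$ by Lemma \ref{Lemma: Same determinant}, we have $\det P\cong\det P'$, so $\det\psi\in H^0(X_0,\O)$ is a constant, which is nonzero because $\psi|_Y$ is an isomorphism; hence $\psi$ is an isomorphism over the locally free locus of $P$ and $P'$, whose complement has codimension $\geq 3$, and as a morphism of reflexive sheaves it is therefore an isomorphism. Thus $P\cong P'$, i.e.\ $\Gr(\E_0)^{**}\cong\Gr(\E'_0)^{**}$.

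I expect the main obstacle to be coordinating the genericity of $Y$ so that all the required properties hold simultaneously for a single $l$: the refined Mehta--Ramanathan conclusion (restriction of the Jordan--H\"older filtration, stability of each restricted factor) and the cohomological restriction isomorphism must both hold while $Y$ still meets none of the relevant codimension $\geq 2$ loci. Pinning down the Serre vanishing bound that forces $H^0(X_0,\mathcal{F})\to H^0(Y,\mathcal{F}|_Y)$ to be an \emph{isomorphism} rather than merely injective is the one quantitative point; once the restriction of the associated graded is identified, the remaining steps are formal.
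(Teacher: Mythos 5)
The paper offers no proof of this lemma (it is invoked as ``well-known''), so your proposal must stand on its own; its architecture is the standard one and most of it is sound. The reduction via the refined Mehta--Ramanathan theorem to $\Gr(\E_0|_Y)=P|_Y$ and $\Gr(\E'_0|_Y)=P'|_Y$ is correct (the paper uses MR on normal varieties throughout, so that input is fair game), the forward implication is indeed immediate, and the endgame --- lift an isomorphism $P|_Y\to P'|_Y$ to $\psi\in\Hom(P,P')$, use $\det\E_0\cong\det\E'_0$ to see that $\det\psi$ is a nonzero constant, then extend invertibility across the bad set by reflexivity --- works. (One harmless slip: in this subsection $X_0$ is only assumed normal, so the non-locally-free locus of $P,P'$ has codimension $\geq 2$, not $\geq 3$; codimension $2$ suffices for the reflexive extension argument.)

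The genuine gap is your justification of the key step, surjectivity of $\Hom(P,P')\to\Hom(P|_Y,P'|_Y)$. The Koszul chase needs $H^j(X_0,\F(-jl))=0$ for all $1\leq j\leq n-1$ and $l\gg 0$, where $\F=\mathcal{H}om(P,P')$, and ``Serre vanishing'' cannot produce these: Serre vanishing concerns positive twists, and converting negative-twist statements into positive-twist ones via Serre duality requires $X_0$ to be Cohen--Macaulay and $\F$ to be locally free (or maximal Cohen--Macaulay), neither of which is available here, where $X_0$ is merely normal and $\F$ merely reflexive. The failure is real, not expository: Serre duality on $\P^N$ identifies the dual of $H^j(X_0,\F(-m))$, for $m\gg 0$, with $H^0\bigl(\mathcal{E}xt^{N-j}_{\P^N}(\F,\omega_{\P^N})(m)\bigr)$, which is nonzero as soon as $\F$ has a depth defect in the relevant degree. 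For instance, if $X_0$ is the projective cone over a projectively normal smooth surface $V$ with $H^1(V,\O_V)\neq 0$ --- exactly the kind of central fiber arising in this paper's deformation-to-the-cone construction --- then $X_0$ is normal but not Cohen--Macaulay at the vertex, and $H^{2}(X_0,\O(-m))\neq 0$ for all $m\gg 0$, so with $n=3$ your argument breaks down already for $\F=\O$. The correct tool is the Enriques--Severi--Zariski lemma: $H^1(X,\G(-m))=0$ for $m\gg 0$ whenever $\G$ satisfies $S_2$ and has support of dimension $\geq 2$. Reflexive sheaves on normal varieties satisfy $S_2$, and $S_2$ is preserved by restriction to generic hypersurface sections, so one descends $X_0\supset D_1\supset D_1\cap D_2\supset\cdots\supset Y$ one generic hypersurface at a time, using only $H^0$- and $H^1$-vanishing of negative twists at each stage (this is how Huybrechts--Lehn handle the analogous steps in their treatment of the restriction theorems); the middle-degree vanishings demanded by your one-shot Koszul argument are never needed and in general do not hold. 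With iterated ESZ substituted for ``Koszul plus Serre vanishing,'' your proof is complete.
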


\subsubsection{Uniqueness of algebraic blow-up locus when $\codim X_0^s \geq 3$}
In this section, we assume $n\geq 3$.  Given $\E$ and $\E'$ two semistable
degenerations of $\{\E_t\}_t$, assume $\mathcal C(\E) \neq \mathcal C(\E')$,
similar to  \cite[p.\ 64]{GrebToma:17}, we have  
\begin{prop}\label{prop5.1}
By restricting to a generic family of smooth surfaces transverse to  $\mathcal{C}(\E') \cup \mathcal C(\E)$, we get two different semistable degenerations of the corresponding restriction of $\{\E_t\}_t$ in terms of the algebraic blow-locus.
\end{prop}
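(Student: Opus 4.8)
The plan is to place the problem in the relative complete-intersection setting of \cite{GrebToma:17}: writing $n=\dim X_t$, choose $n-2$ generic members $H_1,\dots,H_{n-2}$ of $|\O_{\P^N}(l)|$ with $l$ large, pulled back to $\Xcal\subset\P^N\times\P^1$, and set $\Scal=\Xcal\cap H_1\cap\cdots\cap H_{n-2}$, $S_t=X_t\cap H_1\cap\cdots\cap H_{n-2}$. First I would run a simultaneous Bertini argument so that, for generic $H_i$: $S_t$ is a smooth surface for $t\neq0$; $S_0$ is a smooth surface, using $\dim X_0^s\leq n-3$ so that the $n-2$ sections miss $X_0^s$ and cut $X_0^{reg}$ transversally; $S_0$ meets each irreducible component $\Sigma_k$ of $\supp(\mathcal{C}(\E))\cup\supp(\mathcal{C}(\E'))$ properly, i.e.\ in finitely many smooth points, pairwise disjoint for distinct $k$ and away from $\Sing\Sigma_k$; and $S_0$ avoids the codimension $\geq3$ loci where $\E_0,\E'_0,\Gr(\E_0),\Gr(\E'_0)$ fail to be locally free while meeting the codimension-two non-locally-free loci transversally. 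The standing hypotheses $n\geq3$, $\codim X_0^s\geq3$ and $\codim\Sing(\E_t)\geq3$ are precisely what make all of these conditions achievable at once.

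Next I would verify that $\E|_{\Scal}$ and $\E'|_{\Scal}$ are semistable degenerations of $\{\E_t|_{S_t}\}_{t\neq0}$ in the sense of Definition \ref{defi4.1}. Flatness of $\E|_{\Scal}$ over $\P^1$ follows from flatness of $\E$ together with the fact that the $H_i$ are pulled back from the $\P^N$-factor, so that $\Scal\to\P^1$ is cut fibrewise by Cartier divisors meeting the fibers properly; and $(\E|_{\Scal})|_{S_t}\cong\E_t|_{S_t}$ for $t\neq0$ is a semistable bundle by the Mehta--Ramanathan theorem \cite{MehtaRamanathan:84,Flenner:84} on the smooth $X_t$. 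For the central fiber, $\E_0|_{S_0}$ is torsion-free since the generic surface meets the non-locally-free locus of the torsion-free sheaf $\E_0$ only in the transverse points arranged above, so that no torsion is created, and it is semistable by the Mehta--Ramanathan restriction theorem applied to the semistable sheaf $\E_0$ on $X_0$; the same applies to $\E'$.

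Finally, to see that the two restrictions are genuinely different I would use the transverse-slice definition of the blow-up locus (Definition \ref{Definition:algebraic blow-up locus}). The key identity is that restriction commutes with the formation of $\mathcal{C}$, namely $\mathcal{C}(\E_0|_{S_0})=\mathcal{C}(\E)\cdot[S_0]=\sum_k m_k^{alg}(\Sigma_k\cdot S_0)$ as $0$-cycles on the smooth surface $S_0$, and likewise for $\E'$. Granting this, if $\mathcal{C}(\E)\neq\mathcal{C}(\E')$ then some component occurs with different multiplicity, and for $l$ large a generic $S_0$ separates the components and meets each of them, so the $0$-cycles $\mathcal{C}(\E)\cdot[S_0]$ and $\mathcal{C}(\E')\cdot[S_0]$ differ; hence $\E|_{\Scal}$ and $\E'|_{\Scal}$ carry distinct blow-up loci on $S_0$ and are different semistable degenerations. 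The main obstacle will be exactly this compatibility: proving that forming $\Gr$, double-dualizing, and extracting the codimension-two cycle all commute with restriction to $S_0$ up to the transverse-slice multiplicities. This amounts to controlling $\Gr(\E_0|_{S_0})^{**}/\Gr(\E_0|_{S_0})$ in terms of $(\Gr(\E_0)^{**}/\Gr(\E_0))\cap S_0$, which in turn requires that the Jordan--H\"older filtration of $\E_0|_{S_0}$ be induced from that of $\E_0$ (again via Mehta--Ramanathan for $l$ large) and that the transverse restriction neither create nor destroy length at the finitely many intersection points; this is the step modeled on \cite[p.\ 64]{GrebToma:17}.
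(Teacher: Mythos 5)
Your proposal is correct and is essentially the paper's (implicit) argument: the paper offers no proof of this proposition beyond the citation of \cite[p.\ 64]{GrebToma:17}, and what that reference supplies is exactly your route, namely generic high-degree complete intersection surfaces via Bertini (using $n\geq 3$ and $\codim X_0^s\geq 3$), Mehta--Ramanathan to make the restricted families semistable degenerations, and the compatibility $\mathcal{C}(\E_0|_{S_0})=\sum_k m_k^{alg}(\Sigma_k\cdot S_0)$ for generic transverse slices. The point you single out as the main obstacle, that forming $\Gr$, double duals, and the codimension-two cycle all commute with generic transverse restriction, is indeed the substantive step, and your sketch of it (restricting a Jordan--H\"older filtration using Mehta--Ramanathan for the stable factors, then matching lengths of the torsion quotient at the finitely many transverse intersection points) is the correct way to carry it out.
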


Given this, in the following, we can assume $\Xcal$ to be a family of smooth projective surfaces and two semistable degenerations  $\E$ and $\E'$  of $\{\Ecal_t\}$ so that $\Ccal(\Ecal_0)\neq \Ccal(\Ecal_0')$. To show uniqueness of bubbling sets, we first briefly recall
how the moduli space of semistable sheaves with given determinants of
numerical classes over $X_0=p^{-1}(0)$ is constructed. We refer readers to
\cite{HuybrechtsLehn:10} for more details. Using the fact that the set of
isomorphism classes  of semistable sheaves is bounded, 
and hence $m$-regular for some integer $m$, we can put those sheaves in a fixed Quot scheme. More precisely, denote 
$$\mathcal{H}:=\O_{X_0}(-m)^{\oplus P(m)}\ ,$$ 
and let 
$R^{\mu ss} \subset \Quot(\mathcal H, P)$ 
be the locally closed subscheme of all quotients $[q:\mathcal{H}
\rightarrow \F]$ such that $\mathcal{\F}$ is torsion-free
$\mu$-semistable with rank equal to $r$, determinant equal to $Q$, and same numerical
classes. Furthermore, $q$ induces an isomorphism between $V$ and
$H^0(\F(m))$. The group $\SL(V)$ acts on $R^{\mu ss}$ naturally. Now we consider the universal quotient 
$\tilde{q}: \mathcal{O}_{R^{\mu ss}}\otimes \mathcal{H}
\lra\widetilde{\F}$,
and the line bundle 
$
\mathcal{N}:=\lambda_{\widetilde{\F}}(u_1(c))
$, 
where $u_1(c)$ is certain sheaf associated to $\O(1)$ and $\O_x$ for some
point $x\in X_0$. For some positive integer $\nu>0$, $\mathcal{N}^{v}$ is
generated by $\SL(V)$-invariant global sections. Furthermore, there is an integer $N>0$ so that 
$$
\bigoplus_{l\geq 0} H^0(R^{\mu ss}, \mathcal{N}^{lN})^{\SL(V)}
$$
is a finitely generated graded ring. Now define 
$$
M^{\mu ss}:=\text{Proj}\biggl(\bigoplus_{k\geq 0} H^0(R^{\mu ss}, \mathcal{N}^{k
N})^{\SL(V)}\biggr)\ ,
$$
where $H^0(R^{\mu ss}, \mathcal{N}^{k N})^{\SL(V)}$ denotes the space of
sections in $H^0(R^{\mu ss}, \mathcal{N}^{k N})$ invariant under
the action of $\SL(V)$. 
Also, we have a map 
\begin{equation}
\Phi: R^{\mu ss} \lra M^{\mu ss}.
\end{equation}
which collapses the $\SL(V)$ orbits into points. Furthermore, the image of two points $[q_0: \mathcal{H} \rightarrow \E_0]$ and $[q_0': \mathcal{H} \rightarrow \E'_0]$ are the same in $M^{\mu ss}$ if and only if 
$$
\Gr(\E_0)^{**}=\Gr(\E_0')^{**}
\ ,\ \text{and}\ 
\mathcal{C}(\E_0)=\mathcal{C}'(\E_0')\ .
$$
We want now to do such constructions for the family $\Xcal$. Denote 
$$
\mathcal H_\Xcal= \underline{\C}^{\oplus P(m)}\otimes_{\O_{\Xcal}}
\O_{\Xcal}(-k)\ .
$$
Now we define $R^{\mu ss}_{\Xcal}\subset \Quot(\mathcal{H}_\Xcal, P)$ to be the space of flat families of torsion-free semistable sheaves over $\Xcal$ with the same numerical classes and Hilbert polynomial $P$ as the sheaves in $M^{\mu ss}$. By repeating the construction above for a family, we have a well-defined object 
$$
\mathcal M^{\mu ss}_\Xcal:=\text{Proj}\biggl(\bigoplus_{k\geq 0} H^0(R^{\mu
ss}_\Xcal, \mathcal{N}_\Xcal^{k N})^{\SL(V)}\biggr)\ ,
$$
which admits a natural rational map 
$
\Phi_{\Xcal}: R^{\mu ss}_\Xcal \dashrightarrow \mathcal{M}^{\mu ss}_\Xcal
$
over $\P^1$.
\begin{prop}\label{prop4.12}
For $N$ large, near the central fiber, $\Phi_{\Xcal}$ is well-defined and restricts to $\Phi$ on the central fiber.
\end{prop}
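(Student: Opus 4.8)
The plan is to describe $\Phi_\Xcal$, over a neighborhood of $t=0$, by the $\SL(V)$-invariant sections of a single sufficiently divisible power of the relative determinant line bundle $\mathcal{N}_\Xcal$, and to deduce that it is a morphism restricting to $\Phi$ by comparison with the central fiber. Recall that in the GIT construction of $M^{\mu ss}$ (following \cite{HuybrechtsLehn:10}) one has, for $N$ large, that $\mathcal{N}^{kN}$ is generated on $R^{\mu ss}$ by its $\SL(V)$-invariant global sections, and that these separate the orbits of polystable quotients; this is exactly the assertion that $\Phi$ is a well-defined morphism collapsing $\SL(V)$-orbits. The proposition amounts to propagating this base-point-freeness off the central fiber into a neighborhood.

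First I would compare sections. Since $\mathcal{N}_\Xcal$ restricts to $\mathcal{N}$ on the central-fiber parameter space, and since $\SL(V)$ is reductive so that the invariants functor is exact and commutes with flat base change in characteristic zero (via the Reynolds operator), the essential input is that for $N$ large the formation of the invariant direct image $p_\ast(\mathcal{N}_\Xcal^{kN})^{\SL(V)}$ commutes with restriction to the fiber over $0$. This is the relative form, near $t=0$, of the finite-generation and global-generation statements underlying the GIT construction, and it gives surjectivity of
$$
H^0\bigl(R^{\mu ss}_\Xcal, \mathcal{N}_\Xcal^{kN}\bigr)^{\SL(V)} \lra H^0\bigl(R^{\mu ss}, \mathcal{N}^{kN}\bigr)^{\SL(V)}\ .
$$
In particular, a finite base-point-free invariant system on $R^{\mu ss}$ defining $\Phi$ lifts to invariant sections of $\mathcal{N}_\Xcal^{kN}$ over a neighborhood of the central fiber.

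Granting the lift, base-point-freeness near $t=0$ is a quasi-compactness argument. The finitely many lifted sections restrict on the central fiber to a system with no common zero, so their common zero locus in $R^{\mu ss}_\Xcal$ is closed and disjoint from the central fiber; since $R^{\mu ss}$ is quasi-compact (a locally closed subscheme of a projective Quot scheme), the complement of this zero locus is an open neighborhood of the central fiber on which $\Phi_\Xcal$ is a morphism. That $\Phi_\Xcal$ restricts to $\Phi$ over $t=0$ is then immediate, since both are defined by the same invariant sections, which agree under restriction by construction of the lift; here one uses that the central fiber of $R^{\mu ss}_\Xcal$ maps to $R^{\mu ss}$ via restriction of a flat family to $X_0$, which is torsion-free and semistable with the prescribed invariants by the results of the previous subsections.

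The main obstacle I anticipate is precisely the base-change step for invariant sections across the singular central fiber. Because the relative determinant line bundle is only fiberwise semiample on the semistable locus, one cannot invoke relative Serre vanishing naively; instead the commutation of $p_\ast(\mathcal{N}_\Xcal^{kN})^{\SL(V)}$ with restriction to $t=0$ must be extracted from the relative GIT machinery, using properness of the ambient relative Quot scheme $\mathrm{Quot}(\mathcal{H}_\Xcal, P)\to\P^1$ (to which $\mathcal{N}_\Xcal$ extends) together with the boundedness of the relevant family. One must also verify that the positivity and boundedness underlying the central-fiber construction of $M^{\mu ss}$ survive over the normal, possibly singular, variety $X_0$; once these facts are in place, the remaining steps are formal.
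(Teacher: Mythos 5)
Your reduction of the proposition to the surjectivity of the restriction map
$$
H^0\bigl(R^{\mu ss}_\Xcal, \mathcal{N}_\Xcal^{kN}\bigr)^{\SL(V)} \lra H^0\bigl(R^{\mu ss}, \mathcal{N}^{kN}\bigr)^{\SL(V)}
$$
is a reasonable formalization, and the quasi-compactness argument that follows (lifted sections with no common zero on the central fiber define $\Phi_\Xcal$ on an open neighborhood, restricting to $\Phi$) is fine once such lifts exist. The genuine gap is the surjectivity itself, which you flag as "the main obstacle" and then leave to "the relative GIT machinery." It is not a formal consequence of the ingredients you list: $R^{\mu ss}_\Xcal$ is not proper over $\P^1$, $\mathcal{N}_\Xcal$ is only fiberwise semiample on the semistable locus, and there is no flatness of $R^{\mu ss}_\Xcal \to \P^1$ that would let you run cohomology and base change; exactness of invariants via the Reynolds operator only helps after one has surjectivity before taking invariants, which is precisely the hard part. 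As written, the proposal assumes the essential content of the proposition rather than proving it.

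The paper fills exactly this gap by a concrete mechanism that sidesteps any base-change theorem. In the construction of $M^{\mu ss}$, the invariant sections of $\mathcal{N}^{kN}$ defining $\Phi$ are not abstract GIT sections: they are pulled back, via restriction of sheaves to high-degree generic curves, from determinant line bundles on moduli spaces of semistable bundles over those curves. Since $\Xcal \to \P^1$ is flat, a high-degree curve in the central fiber can be placed in a flat family of curves over the base, and the \emph{relative} moduli space of semistable bundles on curves (\cite[Thm.\ 4.3.7]{HuybrechtsLehn:10}) is projective over $\P^1$; pulling back its sections produces invariant sections of $\mathcal{N}_\Xcal^{kN}$ defined near the central fiber which, by construction, restrict on the central fiber to the sections defining $\Phi$. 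This is the source of "enough sections" that your argument needs; without it, or a genuine proof of your base-change claim, the proof is incomplete.
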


\begin{proof}
This follows directly from the construction actually. Indeed, the sections
    used to construct $M^{\mu ss}$ are pulled back from the moduli space of
    semistable sheaves over a high degree generic curves. In our case,
    since the family is flat, we can always assume the high degree curve in
    the central fiber fits into a flat family. Now the conclusion follows
    from  \cite[Thm.\ 4.3.7]{HuybrechtsLehn:10},  which formulates 
    the relative version of moduli space of semistable bundles over curves, and 
    this provides enough sections.
\end{proof}

Combined with Proposition \ref{prop5.1}, this implies the uniqueness of the bubbling set in Theorem \ref{Theorem: algebraic degeneration}. Indeed, this follows from that $[\Ecal_0]$ and $[\Ecal_0']$ give the same point in $M^{\mu ss}$ by continuity. In particular, $\Ccal(\Ecal_0)=\Ccal(\Ecal_0')$ which is a contradiction to our assumption. We emphasize here by assumption, we are working over a family of smooth surface.

\subsection{The analytic side} \label{analytic side} 
Now we prove Theorem \ref{Theorem: analytic degeneration}. Recall we assume $\E_t\to X_t$ 
are stable vector bundles for $t\neq 0$,  and we want to
study the analytic degeneration of the corresponding HYM connections with respect to the natural K\"ahler metric $\omega_t=\omega_{FS}|_{X_t}$ on $X_t$ induced from the embedding $\Xcal \rightarrow \PBbb_1 \times \PBbb^N$.

As for this, we always fix a sequence $\Ecal_i:=\Ecal_{t_i}$ where $t_i \rightarrow
0$ as $i\rightarrow \infty$.  Let $A_i$ be the admissible
HYM connection on $\E_{t_i}$ over $(X_{i}, \omega_i):=(\Xcal_{t_i}, \omega_{t_i})$.
By passing to a subsequence, up to gauge transforms,  $A_i$ converges to an admissible HYM connection $A_\infty$ away from a complex codimension $2$ subvariety $\Sigma \subset X_0^{reg}$. More precisely, take a precompact exhaustion of $X_0^{reg}$ as 
$
U_1 \subset U_2 \subset
\cdots\subset X_0^{reg}
$. 
Over each $U_j$, by fixing diffeomorphisms of the base, we can assume $\omega_i$ and the complex structure $J_i$ of $X_i$ are all defined on $U_j$ which converge to the complex structure and the K\"ahler metric of $X_0^{reg}\cap U_j$. Now we can apply Uhlenbeck compactness to get a limit over $U_j$ and a diagonal sequence argument gives the limiting connection $A_\infty$ as well as the bubbling set $$
\Sigma=\{x\in X_0^{reg}: \lim_{r\rightarrow 0} \liminf_{i} r^{4-2n}\int_{B_r(x)} |F_{A_i}|^2>0\}
$$ 
which is a subvariety of $X_0^{reg}$ and decomposes as $\Sigma=\Sigma_b \cup \Sing(A_\infty)$,
where $\Sigma_b$ is of pure codimension $2$ (cf.\ \cite{Tian:00}). As a sequence of Radon measures over $Y^{reg}$, 
$$
\mu_i:=|F_{A_i}|^2\dVol_i \rightharpoonup \mu_\infty=|F_{A_\infty}|^2 \dVol+\nu
\ ,
$$
where $\nu|_{Y^{reg}}=\sum_k 8\pi^2 m_k \Sigma_k$ and $\Sigma_k$ denotes
the pure codimension $2$ components of $\Sigma$. Here $m_k\in \mathbb{Z}$ is called the analytic multiplicity of $\Sigma_k$. In the following, abusing notation, we will use $\Sigma_b=\sum_k m_k \Sigma_k$ to include the multiplicity information. 
 Let $\E_\infty$ be the holomorphic vector bundle defined by $A_\infty$ over $X_0^{reg}$.

\subsubsection{Proof of Theorem \ref{Theorem: analytic degeneration} (I)}
Fix $k$ large enough and a sequence $\{s_i\}_j$ of holomorphic sections of $H^0(X_{i}, \E_i(k))$ with $L^2$ norms normalized to be $1$. 

\begin{lem}\label{lp}
There exists a constant $C$ so that 
$$\|s_i\|_{W^{1,2}}<C\ ,\ \|s_i\|_{L^\infty}<C\ .$$ 
In particular, by passing to a subsequence, $s_i$ converges locally smoothly to a nontrivial holomorphic section $s_\infty$  away from $\Sigma$ satisfying $\|s_\infty\|_{L^2}=1$ and 
$$\|s_\infty\|_{W^{1,2}}\leq C\ ,\  \|s_\infty\|_{L^\infty}\leq C\ .$$ 
\end{lem}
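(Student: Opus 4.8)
The plan is to read off the two a priori bounds from the Bochner--Kodaira identity for holomorphic sections, exactly as in the proof of Corollary~\ref{cor:polystable}, and then to pass to the limit using the smooth convergence of the connections already recorded in Section~\ref{analytic side}. For each $i$, write the pointwise identity for a holomorphic section $s_i$ of $\E_i(k)$ with respect to the Chern connection of the HYM metric:
$$
\Delta |s_i|^2 = |\nabla s_i|^2 - \langle \sqrt{-1}\Lambda_{\omega_i} F_{\E_i(k)}\, s_i, s_i\rangle\ .
$$
Since $\E_i$ carries an HYM metric, \eqref{eqn:hym} gives $\sqrt{-1}\Lambda_{\omega_i} F_{\E_i} = \mu_i \Id$, while the twist by $\O(k)$ contributes $k\,(\sqrt{-1}\Lambda_{\omega_i} F_{\O(1)})\Id$, which is a bounded (indeed essentially constant) term because $\omega_i = \omega_{FS}|_{X_i}$. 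By Corollary~\ref{cor2.7} the Einstein constants $\mu_i$ are computed from the slopes $\mu_\alpha(\E_i)$, and these converge as $t_i \to 0$; hence there is a constant $\Lambda_0$, independent of $i$, with $\sqrt{-1}\Lambda_{\omega_i} F_{\E_i(k)} = c_{i,k}\Id$ and $|c_{i,k}|\le \Lambda_0$. Integrating the identity over the closed manifold $X_i$ and using $\int_{X_i}\Delta|s_i|^2\,\dVol_i = 0$ yields $\int_{X_i}|\nabla s_i|^2 \le \Lambda_0\,\|s_i\|_{L^2}^2 = \Lambda_0$, which is the uniform $W^{1,2}$ bound.

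For the sup bound, the same identity gives the differential inequality $\Delta|s_i|^2 \ge -\Lambda_0 |s_i|^2$, so $|s_i|^2$ is a nonnegative subsolution, and I would run Moser iteration to get $\|s_i\|_{L^\infty}^2 \le C\,\|s_i\|_{L^2}^2 = C$. The only input that must be made uniform in $i$ is the Sobolev constant of $(X_i, \omega_i)$. This is where the geometry of the family enters: each $X_i \subset \P^N$ is a complex, hence minimal, submanifold, and since $p$ is flat the degree, and therefore the volume $\Vol(X_i)$, is uniformly bounded. A Michael--Simon type Sobolev inequality for minimal submanifolds \cite{Simon:83} then provides a Sobolev constant depending only on the dimension, the ambient geometry of $\P^N$, and the volume bound. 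Hence the Sobolev constants stay bounded as $t_i\to 0$ and $\|s_i\|_{L^\infty}\le C$ follows.

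To extract the limit, I would work over a precompact exhaustion of $X_0^{reg}\setminus\Sigma$. On each such compact set, using the diffeomorphisms and the smooth convergence $A_i\to A_\infty$, $J_i \to J_0$ from Section~\ref{analytic side}, the $s_i$ solve uniformly elliptic $\bar\partial$-equations with uniform $C^0$ bounds; elliptic bootstrapping gives uniform $C^\infty_{loc}$ bounds, so after passing to a subsequence $s_i \to s_\infty$ in $C^\infty_{loc}(X_0^{reg}\setminus\Sigma)$, with $s_\infty$ holomorphic for $A_\infty$, and $\|s_\infty\|_{W^{1,2}}\le C$, $\|s_\infty\|_{L^\infty}\le C$ by lower semicontinuity. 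The remaining point is that $s_\infty\not\equiv 0$ with $\|s_\infty\|_{L^2}=1$, which I would obtain by ruling out concentration of the unit $L^2$ mass onto $\Sigma\cup X_0^s$: the uniform bound $|s_i|\le C$ shows that the mass carried in a tubular neighborhood of radius $\delta$ is at most $C^2$ times the volume of that neighborhood in $X_i$, and the latter tends to $0$ with $\delta$, uniformly in $i$, by the monotonicity formula for minimal submanifolds together with the positive codimension of $\Sigma\cup X_0^s$. Hence the full mass survives in the limit.

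The main obstacle will be precisely these two analytic inputs, both of which must survive the degeneration to the singular central fiber: the uniform Sobolev constant as $X_i$ collapses onto the normal but singular $X_0$, and the no-loss-of-mass estimate near $\Sigma\cup X_0^s$. Both are controlled by the minimality of the fibers and the uniform volume bound via \cite{Simon:83}; once these are in hand, the rest is standard elliptic theory.
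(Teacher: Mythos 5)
Your proposal is correct and follows essentially the same route as the paper: the paper's proof consists of citing \cite[Lemma 2.10]{ChenSun:19} for exactly this Bochner-formula/Moser-iteration argument, with the uniform Sobolev constant supplied by the minimality of the fibers $X_i\subset\P^N$ via \cite{Simon:83}. Your write-up simply unpacks that citation — the integrated Bochner identity for the $W^{1,2}$ bound, Moser iteration for the $L^\infty$ bound, smooth local convergence away from $\Sigma$, and the no-concentration argument (uniform $L^\infty$ bound against small tubular-neighborhood volume) to keep $\|s_\infty\|_{L^2}=1$ — so no gap.
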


\begin{proof}
The $W^{1,2}$ bound follows from integration of the equation
$$
\Delta |s_i|^2= |\nabla s_i|^2-\mu |s_i|^2.
$$
Since we have a uniform Sobolev constant for $X_{t_i}$, which is due to the
    fact that $X_{t_i}$ is a family of minimal submanifold in $\P^N$ (see
    \cite{Simon:83}), the uniform $L^\infty$ bound follows from Moser
    iteration. Note here  that the $X_{t_i}$ are all smooth.
\end{proof}

Fix $k$ large enough. Choose an orthonormal basis $\{s_i^j\}_j$ for $H^0(X_i, \E_{i}(k))$.  By passing to a subsequence, we can assume $s_i^j$ converges to $s_\infty^{j}$. Let $\F^{k}_\infty$ be the image sheaf of the map
$
q_\infty: \O(-k)^{\oplus N_k} \to\E_\infty 
$,
where $q_\infty=(s_\infty^1, \cdots, s_\infty^{N_k})$. Using the argument
in  \cite[p.\ 54]{GSTW:18}, we have 
\begin{lem}
    $\F^{k}_\infty \subset \F^{k+1}_\infty$. Furthermore, over any fixed compact subset $K \subset Y^{reg}$, the equality holds for $k$ large .
\end{lem}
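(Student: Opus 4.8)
The plan is to establish the inclusion by multiplying degree-$k$ limiting sections by a fixed hyperplane section to land in degree $k+1$, and then to promote the inclusion to an equality on compacta by showing that the limiting sections already generate $\E_\infty$ there once $k$ is large. Throughout I work on $X_0^{reg}\setminus\Sigma$, where $\E_\infty$ is a genuine holomorphic bundle and the convergence $s_i^\bullet\to s_\infty^\bullet$ of Lemma \ref{lp} is smooth on compacta, and I extend the resulting statements across $\Sigma$ afterward by reflexivity.

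For the inclusion, fix a section $t\in H^0(\P^N,\O(1))$ and restrict it to each $X_i$; since the $X_i$ are minimal submanifolds of $\P^N$ degenerating to $X_0$, the norms $\sup_{X_i}|t|$ are uniformly bounded. For a basis element $s_i^j$ of $H^0(X_i,\E_i(k))$ the product $t s_i^j\in H^0(X_i,\E_i(k+1))$ satisfies $\|t s_i^j\|_{L^2}\leq (\sup_{X_i}|t|)\,\|s_i^j\|_{L^2}$. Writing $t s_i^j=\sum_l c_{i,l}^j\, s_i^{k+1,l}$ in the orthonormal basis $\{s_i^{k+1,l}\}$ of $H^0(X_i,\E_i(k+1))$ whose limits define $\F^{k+1}_\infty$, orthonormality gives $\sum_l |c_{i,l}^j|^2\leq (\sup_{X_i}|t|)^2$, so after passing to a further subsequence (diagonal over $k,j,l$) the coefficients converge, $c_{i,l}^j\to c_l^j$. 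Taking limits on compacta away from $\Sigma$ yields the identity $t s_\infty^j=\sum_l c_l^j\, s_\infty^{k+1,l}$ of holomorphic sections of $\E_\infty(k+1)$, which then holds on all of $X_0^{reg}\setminus\Sigma$ by analytic continuation. Viewing $\F^k_\infty$ and $\F^{k+1}_\infty$ as subsheaves of $\E_\infty$, on the open set $\{t\neq 0\}$ multiplication by $t$ is an isomorphism $\E_\infty(k)\xrightarrow{\sim}\E_\infty(k+1)$ carrying the generators $s_\infty^j$ of $\F^k_\infty(k)$ into the subsheaf generated by the $s_\infty^{k+1,l}$, i.e.\ into $\F^{k+1}_\infty(k+1)$; untwisting, $\F^k_\infty\subset\F^{k+1}_\infty$ on $\{t\neq 0\}$. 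Letting $t$ range over hyperplane sections nonvanishing at an arbitrary point (possible since $\O(1)$ is very ample) gives the inclusion everywhere.

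For the equality on a compact $K$ contained in the locus where $\E_\infty$ is a bundle, it suffices to show that for $k\geq k_0(K)$ the limiting sections $\{s_\infty^j\}$ generate the fiber $\E_\infty(k)_x$ at every $x\in K$, since then $\F^k_\infty|_K=\E_\infty|_K=\F^{k+1}_\infty|_K$. Fix $x\in K$, choose a basis $v_1,\dots,v_r$ of $\E_\infty(k)_x$, and pick points $x_i\to x$. For $k$ large (uniformly over the bounded family, by the positivity of $\O(1)$) the $L^2$-estimates underlying Lemma \ref{lp}, together with a H\"ormander-type peak-section construction and the uniform Sobolev constant of \cite{Simon:83}, produce sections $\tau_{i,m}\in H^0(X_i,\E_i(k))$ with prescribed values $\tau_{i,m}(x_i)\to v_m$ and uniform bound $\|\tau_{i,m}\|_{L^2}\leq C$. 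Each $\tau_{i,m}$ is a bounded-coefficient combination of the $s_i^j$, so its limit $\tau_{\infty,m}\in H^0(\E_\infty(k))$ lies in the span of $\{s_\infty^j\}$ and satisfies $\tau_{\infty,m}(x)=v_m$; hence $\{s_\infty^j(x)\}$ spans the fiber. As these estimates are uniform for $x\in K$, the threshold $k_0(K)$ is uniform. (Equivalently, granting $\bigcup_k\F^k_\infty=\E_\infty$, coherence of $\E_\infty$, the Noetherian property of the local rings, and compactness of $K$ force the increasing sequence $\F^k_\infty|_K$ to stabilize.)

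The main obstacle is exactly this generation step. Bare global generation of $\E_i(k)$ does not pass to the limit, because linear independence of the evaluated sections can degenerate as $i\to\infty$; the remedy is the \emph{quantitative} peak-section construction, whose uniform $L^2$-bounds do survive. The delicate points are the uniformity of the $L^2$-existence estimates over the degenerating family $(X_i,\omega_i)$---which rests on the uniform Sobolev/Poincar\'e constant for the minimal submanifolds $X_i\subset\P^N$ together with the HYM condition and bounded curvature used in Lemma \ref{lp}---and the loss of control of these constants as $x$ approaches $\Sigma$ or the singular locus of $X_0$, which is precisely why the equality is asserted only on compacta of the good locus.
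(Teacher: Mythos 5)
Your proof of the inclusion $\F^{k}_\infty \subset \F^{k+1}_\infty$ is correct, and it is essentially the argument the paper imports from \cite[p.\ 54]{GSTW:18}: multiply by sections $t$ of $\O(1)$, bound the coefficients in the degree-$(k+1)$ orthonormal basis via $\|t s_i^j\|_{L^2}\leq \sup |t|$, pass to a further subsequence, and let $t$ vary. (The appeal to minimality of the $X_i$ is unnecessary here: $\sup_{X_i}|t|\leq\sup_{\P^N}|t|$ trivially.)

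The equality statement is where you have a genuine gap, and it is conceptual rather than technical. You reduce it to the claim that for $k\geq k_0(K)$ the limiting sections generate $\E_\infty(k)$ at every point of $K$, i.e.\ $\F^k_\infty|_K=\E_\infty|_K$. That claim is strictly stronger than the stabilization $\F^k_\infty|_K=\F^{k+1}_\infty|_K$ asserted in the lemma, and it is \emph{false} whenever $K$ meets the bubbling locus $\Sigma_b$ --- which the lemma permits, since $\Sigma_b\subset Y^{reg}$. Indeed, the content of Lemma \ref{lem4.5} and Proposition \ref{BubblingIsAlgebraic} later in the paper is that the stabilized sheaf $\F_\infty=\bigcup_k\F^k_\infty$ is a \emph{proper} subsheaf of $\overline{\E_\infty}$ whose torsion quotient has codimension-two cycle equal to the bubbling cycle; so wherever curvature concentrates with positive multiplicity, the limiting sections fail to generate for \emph{every} $k$ --- this failure is exactly how the Quot-scheme limit records $\Sigma_b$. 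You hedge by restricting to $K$ disjoint from $\Sigma$, but then you prove less than the lemma states (and less than the paper needs, e.g.\ for Corollary \ref{Corollary: subbundle of trivial ones locally}). Moreover, even on such $K$, your peak-section step is unjustified: uniform H\"ormander $L^2$-existence for $\bar\partial$ with values in $\E_i(k)$ requires Nakano-type positivity of $\E_i(k)$, i.e.\ a pointwise bound on the full curvature $F_{A_i}$, whereas the HYM condition and Lemma \ref{lp} control only $\sqrt{-1}\Lambda_{\omega_i}F_{A_i}$; since $|F_{A_i}|$ blows up near $\Sigma$, no single $k$ is ``large enough'' uniformly in $i$.

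The correct route --- and, in substance, what the cited argument does --- is soft. The bounded sections $s^j_\infty$ extend across $\Sigma$ to sections of the Bando--Siu reflexive extension of $\E_\infty$ over $Y^{reg}$, so the $\F^k_\infty$ form an increasing sequence of coherent subsheaves of one coherent sheaf on $Y^{reg}$; by the strong Noetherian property of coherent sheaves (Grauert--Remmert), such a sequence is stationary on every relatively compact subset. Your final parenthetical gestures at this, but it assumes the generally false premise $\bigcup_k\F^k_\infty=\E_\infty$ and understates the tool: Noetherianity of the local rings gives, at each $x$, a threshold $k_x$ with $(\F^k_\infty)_x=(\F^{k_x}_\infty)_x$ for $k\geq k_x$, and coherence gives, for each \emph{fixed} $k$, a neighborhood of $x$ on which $\F^k_\infty=\F^{k_x}_\infty$; but that neighborhood may shrink as $k\to\infty$, and promoting stalkwise stabilization to a uniform threshold on $K$ is precisely the content of the locally-stationary theorem, not of compactness alone.
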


\begin{prop}\label{prop3.17}
    $\E_\infty$ can be extended to  a coherent reflexive  sheaf
    $\overline{\Ecal_\infty}\to X_0$. 
\end{prop}

\begin{proof}
It suffices to prove that for any $r\leq \rank(\E_\infty)$, there exists a
    \emph{saturated} subsheaf $\G_r\subset \E_\infty$, so that $\G_r$
    extends to a reflexive 
    coherent sheaf on $Y$,  and $\rank \G_r=r$. Then   for $r=\rank(\E_\infty)$, 
    we must have $\G_r=\Ecal_\infty$ on $X_0^{reg}$, and the result follows. 
    We prove the existence of $\G_r$ by induction on $r$.

    \medskip

   \noindent 
    $\bf r= 1$: Take a nonzero section  $\sigma\in
    H^0(X_0^{reg},\E_\infty(k))$. 
    Then there is a Weil divisor
    $Z\subset X_0^{reg}$ so that the image  of the injection 
$$
    0\lra \Ocal_{X_0^{reg}}(Z)\xrightarrow[\hspace*{.75cm}]{\sigma} \E_{\infty}(k)
$$
    defines a saturated rank $1$ subsheaf $\G_1\subset\E_\infty$. 
    By the
        Remmert-Stein
        extension theorem we know that
        $Z$ can be extended to  a Weil divisor
        over $X_0$, since $X_0$ is normal. In particular, this tells us 
       that  $\G_1$ can be extended to  a coherent sheaf 
        which admits a map to $\E_\infty$ away from $X_0^{s}$.
        This proves the result for $r=1$.

        \medskip

        \noindent
        $\bf r\geq 1$:
Assume the statement has been proved for $\rank=r-1$, and let $\G_{r-1}$ be a 
    saturated subsheaf of $\E_\infty$ which can be extended to be 
    a coherent sheaf over $X_0$. Now we look at $\E_{\infty}/\G_{r-1}$. By the 
    asymptotic Riemann-Roch theorem  (see Corollary \ref{ARR}),
    $\E_\infty(k)/\mathcal{G}_{r-1}(k)$ admits a section which 
    generates a rank $1$ subsheaf of $\E_\infty(k)/\mathcal{G}_{r-1}(k)$ for $k$ 
    large. Otherwise, $\mathcal{G}_{r-1}(k)$ admits $P(k)$ sections for $k$ large, 
    which is a contradiction. We pick such a section $\sigma$ of $\E_\infty(k)/\mathcal{G}_{r-1}(k)$. 
    As above, we can assume $\sigma$ gives a rank $1$ saturated coherent
    subsheaf  of $\E_\infty/\mathcal{G}_{r-1}$. Let $\mathcal{G}_{r}\subset
    \E_\infty$ be the sheaf which fits into the following exact sequence
    over $X_0^{reg}$ as 
$$
0\lra \mathcal{G}_{r-1} \lra \G_{r} \lra \mathcal{L}_1\lra 0\ .
$$
Since $\mathcal{G}_{r-1}$ and $\mathcal{L}_1$ are coherent over $X_0$, we
    know $\mathcal{G}_r$ is also coherent over $X_0$. 
\end{proof}
As Lemma \ref{Lemma: Extension of bubbling set}, we have 
\begin{cor}
  The closure  of  $\Sigma$ is a subvariety of $X_0$.
\end{cor}

\begin{proof}
    We first prove the pure codimension $2$ part can be extended. Take such a component $\Sigma_k$. Note for any compact set $K\subset Y^{reg}$,  
    $$
    \Sigma_k \cap K \leq \limsup_i \int_{X_{i}} |F_{A_i}|^2 \leq C$$
    where $C$ is independent of $K$. In particular, $\Sigma_k$ has bounded
    volume in $\PBbb^N$, thus could be extended by Bishop extension
    theorem. The extension of $\Sing(A_\infty)$ follows from
    $\Sing(A_\infty)=\Sing(\Ecal_\infty)$ while  $\Ecal_\infty$ can be extended to be a coherent sheaf over $X_0$ by Proposition \ref{prop3.17}. 
\end{proof}

 We still denote  the closure of $\Sigma$ and $\Sigma_b$ in $X_0$ by $\Sigma$ and $\Sigma_b$.
Given the extension property and
the asymptotic Riemann-Roch theorem \ref{ARR},
we have the following.
\begin{cor}\label{Corollary: subbundle of trivial ones locally}
    For $k$ large, $\F^{k}_\infty=\F^{k+1}_\infty$ and 
    $\mu(\F_\infty^{k})=\mu(\E)$. 
    In particular, the inclusion $\F_\infty^{k}\subset \E_\infty$ induces a bundle isomorphism outside a codimension $2$ subvariety.  
\end{cor}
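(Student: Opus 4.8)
The plan is to combine a Noetherian stabilization argument with the asymptotic Riemann-Roch formula of Corollary \ref{ARR}, pinning down the rank and slope of $\F^{k}_\infty$ by a two-sided estimate and then reading off the geometric conclusion.

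First I would record that, by Proposition \ref{prop3.17}, $\E_\infty$ extends to a reflexive coherent sheaf $\overline{\E_\infty}$ over the projective variety $X_0$, and that each image sheaf $\F^{k}_\infty$ is a coherent subsheaf of it. The increasing chain $\F^{1}_\infty\subset \F^{2}_\infty\subset\cdots\subset\overline{\E_\infty}$ is then an ascending chain of coherent subsheaves of a fixed coherent sheaf on a Noetherian scheme, so it stabilizes: $\F^{k}_\infty=\F^{k+1}_\infty$ for all $k\geq k_0$. Writing $\F_\infty:=\F^{k_0}_\infty$, this upgrades the compact-set equality of the preceding lemma to a global one and yields the first assertion.

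Next I would estimate the Hilbert polynomial of $\F_\infty$ from both sides. For the lower bound, the orthonormal sections $s_i^j\in H^0(X_i,\E_i(k))$ converge smoothly away from $\Sigma$ to sections $s_\infty^j$; the uniform $L^\infty$ bound of Lemma \ref{lp} ensures that no $L^2$ mass is lost across the codimension-$2$ set $\Sigma$, so the $L^2$ inner products converge and the $N_k=\dim H^0(X_i,\E_i(k))=P_{\E}(k)$ limiting sections remain linearly independent. They extend across $X_0^s\cup\Sigma$ and generate $\F_\infty(k)$, whence $h^0(\overline{\F_\infty}(k))\geq N_k=P_{\E}(k)$ for $k$ large; after clearing higher cohomology this reads $P_{\F_\infty}(k)\geq P_{\E}(k)$ for $k\gg 0$. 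Comparing leading coefficients via Corollary \ref{ARR} forces $\rank\F_\infty\geq r$, and since $\F_\infty\subset\overline{\E_\infty}$ we get $\rank\F_\infty=r$; comparing the subleading coefficients then gives $\mu(\F_\infty)\geq\mu(\E)$. For the matching upper bound, $\F_\infty\subset\overline{\E_\infty}$ is an inclusion of sheaves of equal rank, so $\det\F_\infty\hookrightarrow\det\overline{\E_\infty}$ yields $\mu(\F_\infty)\leq\mu(\overline{\E_\infty})$; here I would invoke that $\mu(\overline{\E_\infty})=\mu(\E)$, since the first Chern form is unaffected by the codimension-$2$ bubbling, so by the Chern-Weil computation of Proposition \ref{thm:analytic-degree} the analytic degree of $A_\infty$ equals $\lim_i\deg\E_i=r\,\mu(\E)$. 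Combining the two estimates gives $\mu(\F_\infty)=\mu(\E)$. Finally, with $\F_\infty\subset\overline{\E_\infty}$ of equal rank and equal slope, the torsion quotient $\overline{\E_\infty}/\F_\infty$ has vanishing degree, hence is supported in codimension $\geq 2$, so the inclusion is a bundle isomorphism outside a codimension-$2$ subvariety, which is the last assertion.

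I expect the main obstacle to be the slope bookkeeping in the singular setting: specifically, justifying the degree preservation $\mu(\overline{\E_\infty})=\mu(\E)$ across the Uhlenbeck limit on the normal base, which rests on showing the first Chern current is unchanged by bubbling and on the Chern-Weil identity of Proposition \ref{thm:analytic-degree}, together with the linear independence and extendability of the limiting sections, where the $L^\infty$ control of Lemma \ref{lp} is essential to prevent loss of $L^2$ mass along $\Sigma$.
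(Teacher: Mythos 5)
Your proposal is correct, and its skeleton — Noetherian stabilization of the chain $\F^k_\infty\subset\overline{\E_\infty}$ via Proposition \ref{prop3.17}, linear independence of the limiting orthonormal sections via Lemma \ref{lp}, and the Hilbert-polynomial comparison via Corollary \ref{ARR} — is exactly what the paper's one-line justification (Proposition \ref{prop3.17} combined with Corollary \ref{ARR}) is pointing at. Where you genuinely diverge is in the second half of the argument. The paper's preceding lemma asserts not just $\F^k_\infty\subset\F^{k+1}_\infty$ but that on every compact $K\subset X_0^{reg}\setminus\Sigma$ the limiting sections eventually generate, so once the chain stabilizes one gets $\F_\infty=\E_\infty$ on all of $X_0^{reg}\setminus\Sigma$ directly; the ``bundle isomorphism outside codimension $2$'' then costs nothing, and the slope statement reduces to identifying $\mu(\overline{\E_\infty})$ with $\mu(\E)$. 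You instead run the logic in the opposite order: you prove the slope equality first (lower bound from sections and Riemann-Roch, upper bound from $\det\F_\infty\hookrightarrow\det\overline{\E_\infty}$ together with the Chern-Weil identity of Proposition \ref{thm:analytic-degree} and convergence of the Hermitian-Einstein constants, cf.\ Corollary \ref{cor2.7}), and deduce the codimension-$2$ isomorphism from the vanishing degree of the torsion quotient. Both routes are sound, and they need the same nontrivial analytic input in the end — your Chern-Weil step is the counterpart of what the paper leaves implicit, since pure section counting can never give the upper slope bound (one cannot a priori bound $h^0(\F_\infty(k))$ by $N_k$). Your version has the merit of making that input explicit; the paper's version has the merit that the geometric conclusion (isomorphism off codimension $2$) does not depend on any degree bookkeeping, only on the generation statement. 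One small point of care in your write-up: when invoking Proposition \ref{thm:analytic-degree} on $X_0$ you should note that only the first Chern class is involved, so the standing smooth-in-codimension-two hypothesis of Section 2 is not needed — normality of $X_0$ suffices, which matters because Theorem \ref{Theorem: analytic degeneration}(I)--(II) is stated for merely normal central fibers.
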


As an easy corollary of this, we have 
\begin{prop}\label{regularity}
For any local section $s$ of $\overline{\E_\infty}$, $\log^{+}|s|^2 \in L^q_{loc}$ for any $q>1$. 
\end{prop}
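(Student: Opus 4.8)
The plan is to express an arbitrary local section of $\overline{\E_\infty}$ in terms of the globally bounded limiting sections $s_\infty^j$ by Cramer's rule, thereby reducing the integrability of $\log^+|s|^2$ to the much milder logarithmic integrability of a single holomorphic section of a line bundle. Since $\O(k)$ carries a smooth (Fubini--Study) metric, it is equivalent to prove the statement for sections of $\E_\infty(k)$, and I would work there. The two inputs I would use are: by Lemma \ref{lp} each limiting section satisfies $|s_\infty^j|\leq C$, and by Corollary \ref{Corollary: subbundle of trivial ones locally} the $s_\infty^j$ generate $\E_\infty(k)$ outside a codimension two subvariety of $X_0^{reg}$ for $k$ large. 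Being $L^\infty$-bounded holomorphic sections over the complement of a set of codimension $\geq 2$, they extend (as in Proposition \ref{prop3.17}) to genuine global sections of the reflexive sheaf $\overline{\E_\infty}(k)$ over all of $X_0$.

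Now fix $r=\rank\E_\infty$ and a local section $s$ of $\overline{\E_\infty}(k)$ near a point $p\in X_0$. On the dense open set where some $r$ of the sections, say $s_\infty^{j_1},\dots,s_\infty^{j_r}$, form a local frame, Cramer's rule yields $s=\sum_a f_a\,s_\infty^{j_a}$ with $f_a=\tau_a/\tau$, where $\tau=s_\infty^{j_1}\wedge\cdots\wedge s_\infty^{j_r}$ and $\tau_a$ is obtained by replacing the $a$-th factor by $s$; both are holomorphic sections of the rank one reflexive sheaf $N:=\det(\overline{\E_\infty})\otimes\O(rk)$, and $\tau\not\equiv 0$ because the $s_\infty^j$ generate generically. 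Using $|s_\infty^{j_a}|\leq C$ and fixing any smooth Hermitian metric $h_N$ on $N$ (the ratios $|\tau_a|_{h_N}/|\tau|_{h_N}$ being independent of $h_N$), I obtain almost everywhere on $X_0$ the pointwise bound
\[
\log^+|s|^2 \;\leq\; C' + 2\log^+\!\max_a|\tau_a|_{h_N} + 2\,\bigl(-\log|\tau|_{h_N}\bigr)^+ .
\]
Here $\tau_a$ is a local holomorphic section of $N$, so $\log^+\max_a|\tau_a|_{h_N}$ is locally bounded, and the estimate reduces to controlling the single term $\bigl(-\log|\tau|_{h_N}\bigr)^+$. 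Since $\tau$ is a nonzero holomorphic section of a line bundle, its zero locus is a divisor and $-\log|\tau|_{h_N}$ has at worst logarithmic singularities along it; hence $\bigl(-\log|\tau|_{h_N}\bigr)^+\in L^q_{loc}$ for every finite $q$, which already proves the claim away from the singular strata.

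The point that requires care, and which I expect to be the main obstacle, is the behavior near the singular locus $Y^s$ of $X_0$ and the bubbling set $\Sigma$, where both $\omega$ and the metric degenerate, so that a priori the logarithmic integrability need not survive against the induced volume measure. To handle this I would pass to the embedded resolution $\pi\colon\widehat{X_0}\to X_0$: as a section of the reflexive sheaf $N$, $\tau$ pulls back to a holomorphic section of a line bundle on the smooth variety $\widehat{X_0}$, so $-\log|\pi^*\tau|$ lies in $L^q_{loc}(\widehat{X_0})$ for all finite $q$, and pushing forward along $\pi$ (with $(\pi^*\omega)^n$ dominated by a smooth K\"ahler volume) transfers this integrability back to $X_0$. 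Because $Y^s\cup\Sigma$ has measure zero, the displayed pointwise estimate holds $\omega^n$-almost everywhere, and the integrability of $\bigl(-\log|\tau|\bigr)^+$ then gives $\log^+|s|^2\in L^q_{loc}$ for all $q>1$, as required.
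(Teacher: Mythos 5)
Your argument is correct in substance and shares its analytic core with the paper's proof, but the mechanism is different, and the paper's is leaner. The paper observes that $\overline{\E_\infty}/\F_\infty^{k}$ is a torsion sheaf (Corollary \ref{Corollary: subbundle of trivial ones locally}), so locally there is a \emph{scalar} holomorphic function $P$ with $Ps\in\F_\infty^{k}$; since $\F_\infty^{k}$ is generated by the bounded limiting sections, $Ps\in L^2_{loc}$, and then the splitting
$$
\log^+|s|^2\;\leq\;\log^+|Ps|^2+\log^+\frac{1}{|P|^2}
$$
finishes the proof via the elementary bounds $(\log^+x)^q\leq C_q(x^2+1)$ for the first term and $(\log^+x)^q\leq C\,x^{1/N}$ plus local integrability of $|P|^{-2/N}$ for $N$ large for the second. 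Your Cramer's-rule determinant $\tau$ is in effect an explicit construction of such an annihilator of the torsion quotient, so the two proofs have the same skeleton: push $s$ into the subsheaf generated by bounded sections, then control the log of the reciprocal of the multiplier. What the paper's abstract choice buys is that $P$ is a scalar function: no auxiliary rank one sheaf $N$, no metric on it, and no term $\log^+|\tau_a|$ (which involves the unknown section $s$ itself) ever enters. What your route buys is an explicit denominator and a self-contained proof, via resolution of singularities, of the key integrability fact that the paper only invokes (integrability of $|P|^{-2/N}$ on the normal variety $X_0$ is proved by exactly the pullback-and-dominate argument you sketch). One imprecision you should repair: $N=(\Lambda^r\overline{\E_\infty})^{**}\otimes\O(rk)$ is only reflexive, not locally free, so ``a smooth Hermitian metric $h_N$'' is not well defined across the non-locally-free points, and for an arbitrary metric on the locally free locus the claim that $\log^+\max_a|\tau_a|_{h_N}$ is locally bounded can fail there. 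The fix is to choose $h_N$ concretely, e.g.\ as a quotient metric induced by local generators of $N$ (then every local section of the coherent sheaf automatically has locally bounded norm), and to verify $(-\log|\tau|_{h_N})^+\in L^q_{loc}$ for that same metric, which your resolution argument does after comparing $h_N$ with a smooth metric on the line bundle $(\pi^*N)^{**}$ upstairs (the comparison factor contributes only another logarithmic term along a divisor, hence is harmless).
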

\begin{proof}
    Since $\overline{\E_\infty}/\F_\infty^{k}$ is a torsion sheaf, there
    exists a holomorphic function $P$ so that $Ps\in L^2_{loc}$. Note that
    $$\log^+ |s|^2\leq \log^+ |Ps|^2+\log^+ \frac{1}{|P|^2}.$$ Since $(\log^+x)^q \leq C_q(|x|^2+1)$ for some constant $C_q$, we know 
    $(\log^+|Ps|^2)^q \in L^1_{loc}$
for any $q>1$. It remains to show $(\log^+\frac{1}{|P|^2})^q \in L^1_{loc}$. This follows from $(\log^+x)^q\leq C x^\frac{1}{N}$ for some constant
    $C=C(q,N)$ and the fact that $1/|P|^{\frac{2}{N}}$ is integrable for $N$ large, which follows from the smooth case by passing to a resolution of singularities.
\end{proof}

\begin{cor}
For any global section $s$ of $\overline{\E_\infty}$, $\log^+|s|^2 \in
    W^{1,2}$. In particular, $|s|$ is bounded. 
\end{cor}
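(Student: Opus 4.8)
The plan is to follow the second half of the proof of Proposition~\ref{prop3.2} almost verbatim, the only new ingredients being the local regularity already recorded in Proposition~\ref{regularity} and the uniform Sobolev constant for the family $X_t\subset\P^N$. Fix a global holomorphic section $s$ of $\overline{\E_\infty}$, measure its norm with the admissible metric $H_\infty$, and set $u=\log^{+}|s|^2$. On $X_0^{reg}\setminus\Sigma$ the connection $A_\infty$ is smooth and satisfies $\sqrt{-1}\Lambda_\omega F_{A_\infty}=\mu(\E_\infty)\,\Id$; since $\bar\partial s=0$, the same Weitzenb\"ock computation used in the proof of Corollary~\ref{cor:polystable} (Cauchy--Schwarz applied to $|\nabla^{1,0}s|^2|s|^2-|\langle\nabla^{1,0}s,s\rangle|^2\geq 0$) yields, on the open set $\{s\neq 0\}$, the inequality $\Delta\log|s|^2\geq-\mu(\E_\infty)$. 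Passing to the positive part, $u$ is a nonnegative $W^{1,2}_{loc}$ subsolution of $\Delta u\geq-\mu(\E_\infty)$ on $X_0^{reg}\setminus\Sigma$, exactly as in Proposition~\ref{prop3.2}.

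First I would establish the \emph{global} $W^{1,2}$ bound. By Proposition~\ref{regularity}, $u\in L^q_{loc}$ for every $q>1$; since $X_0$ is projective and $\overline{\E_\infty}$ is globally defined, this gives $u\in L^q(X_0)$ for all finite $q$, in particular $u\in L^2$. Set $Z=Y^s\cup\Sigma$. By Corollary~\ref{Corollary: subbundle of trivial ones locally} the bubbling component $\Sigma$ has complex codimension $2$, while $Y^s$ has complex codimension $\geq 3$ under the smoothness-in-codimension-two hypothesis, so $Z$ has real Hausdorff codimension $\geq 4$. Choosing logarithmic cut-offs $\phi_\delta$ that vanish near $Z$, equal $1$ off a $\delta$-neighborhood of $Z$, and satisfy $\int|\nabla\phi_\delta|^2\to0$, I would test $\Delta u\geq-\mu(\E_\infty)$ against $u\,\phi_\delta^2$ and integrate by parts, absorbing the cross term by Cauchy--Schwarz, to get
\[
\int \phi_\delta^2\,|\nabla u|^2 \;\leq\; 2\,|\mu(\E_\infty)|\int u\,\phi_\delta^2 \;+\; 4\int u^2\,|\nabla\phi_\delta|^2 \,.
\]
The first term is bounded by $2|\mu(\E_\infty)|\,\|u\|_{L^1}<\infty$. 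For the second, H\"older's inequality gives $\int u^2|\nabla\phi_\delta|^2\leq \|u\|_{L^q}^2\,\big\||\nabla\phi_\delta|^2\big\|_{L^{q/(q-2)}}$, and since $Z$ has codimension $\geq 4$ the cut-offs can be arranged so that this last factor tends to $0$ as $\delta\to 0$ for $q$ large; as $u\in L^q$ for all $q$, the error vanishes. Letting $\delta\to 0$ yields $\int_{X_0^{reg}}|\nabla u|^2<\infty$, so $u\in W^{1,2}$; the same cut-off argument shows that the weak inequality $\Delta u\geq-\mu(\E_\infty)$ in fact holds across $Z$, i.e.\ on all of $X_0^{reg}$.

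It then remains to upgrade $W^{1,2}$ to $L^\infty$. Now $u$ is a nonnegative $W^{1,2}(X_0^{reg})$ subsolution of $\Delta u\geq-\mu(\E_\infty)$ on all of $X_0^{reg}$, and the Sobolev inequality for $W^{1,2}(X_0^{reg})$ functions is available: it follows, exactly as in Proposition~\ref{prop3.2}, from the uniform Sobolev constant of the minimal submanifolds $X_t\subset\P^N$ (Leon Simon \cite{Simon:83}), which passes to the limit on $X_0^{reg}$ (cf.\ \cite[(5.5)]{LiTian1995}). Standard Moser iteration then gives $\sup_{X_0^{reg}} u\leq C\big(\|u\|_{L^2}+1\big)<\infty$, so $\log^+|s|^2\in L^\infty$ and hence $|s|\in L^\infty$. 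I expect the main obstacle to be the global $W^{1,2}$ step: ensuring both that the weak subsolution inequality extends across the singular set $Z=Y^s\cup\Sigma$ and that the cut-off error terms genuinely vanish. This is where the codimension bounds (Corollary~\ref{Corollary: subbundle of trivial ones locally} together with smoothness in codimension two) and the local $L^q$ control of Proposition~\ref{regularity} are indispensable, and it is delicate precisely because the known regularity theory for admissible HYM connections is developed over smooth bases rather than across the singularities of $X_0$.
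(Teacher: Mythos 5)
Your proposal is correct and follows essentially the same route as the paper's own proof: the same differential inequality $\Delta \log^{+}|s|^2 \geq -\mu$, the same cut-off/integration-by-parts scheme (with the cross term absorbed by Cauchy--Schwarz and the error term killed by the codimension of the singular set together with the $L^q$ regularity of Proposition \ref{regularity}) to obtain the global $W^{1,2}$ bound, and the same Li--Tian Sobolev inequality plus Moser iteration, as in Proposition \ref{prop3.2}, to upgrade to $L^\infty$. The only difference is one of care, and it is in your favor: the paper's cut-off is stated only around $Y^s$ and its error estimates are left implicit, whereas you correctly cut off around all of $Y^s\cup\Sigma$ (where the classical inequality can fail) and make the H\"older/codimension bookkeeping explicit.
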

\begin{proof}
 This follows from an easy cut-off argument. Indeed, we know 
$$\Delta \log^{+}|s|^2 \geq -\mu(\overline{\E_\infty})\ .$$ 
Take a cut-off function $\chi_\epsilon$ supported outside an $\epsilon$ neighborhood of $Y^s\cup \Sigma$ in $\P^N$ satisfying $|\nabla \chi_\epsilon|<1/\epsilon$. Now the conclusion follows by 
    multiplying the equation above by $\chi_\epsilon^2 \log^{+}|s|^2$, integrating by parts, and letting $\epsilon \rightarrow 0$. Then we get $\log^+|s|^2\in W^{1,2}$. Given this, the $L^\infty$ bound follows as Proposition \ref{prop3.2}.
\end{proof}

As a direct corollary of this, similar to Corollary \ref{cor:polystable}, we have
\begin{cor}\label{Semistablity}
$\overline{\E}_\infty$ is semi-stable with slope equal to $\mu(\F)$. 
\end{cor}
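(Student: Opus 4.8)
The plan is to mirror the proof of Corollary \ref{cor:polystable}, substituting the regularity of Proposition \ref{regularity} (and the corollary immediately following it) for the estimates coming from Proposition \ref{prop3.2}. The slope identity comes for free: by Corollary \ref{Corollary: subbundle of trivial ones locally} the inclusion $\F_\infty^{k}\subset\E_\infty$ is a bundle isomorphism outside a subvariety of complex codimension two, so $\overline{\E_\infty}$ and $\F_\infty^{k}$ carry the same first Chern class and hence the same slope. Combined with $\mu(\F_\infty^{k})=\mu(\E)$ from that same corollary, this gives $\mu(\overline{\E_\infty})=\mu(\F)$.

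It remains to prove semistability, which I would do by contradiction. If $\overline{\E_\infty}$ is not semistable, then it admits a saturated subsheaf $\mathcal S$ of strictly larger slope; passing to determinants, the inclusion $\det\mathcal S\hookrightarrow(\Lambda^{m}\overline{\E_\infty})^{**}$ (with $m=\rank\mathcal S<\rank(\overline{\E_\infty})$) produces a nontrivial global section $s$ of $(\Lambda^{m}\overline{\E_\infty})^{**}\otimes\mathcal L$, where $\mathcal L=(\det\mathcal S)^{-1}$, and
$$
\mu:=\mu\bigl((\Lambda^{m}\overline{\E_\infty})^{**}\otimes\mathcal L\bigr)=m\bigl(\mu(\overline{\E_\infty})-\mu(\mathcal S)\bigr)<0\ .
$$
The admissible HYM connection $A_\infty$ induces an admissible HE metric on $(\Lambda^{m}\overline{\E_\infty})^{**}$, and tensoring with an admissible HE metric on the rank one reflexive sheaf $\mathcal L$ gives $(\Lambda^{m}\overline{\E_\infty})^{**}\otimes\mathcal L$ an admissible HE structure whose Einstein constant, by Corollary \ref{cor2.7}, is a positive multiple of $\mu$ and hence negative; abusing notation, we continue to denote this negative constant by $\mu$.

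Applying Proposition \ref{regularity} and the corollary following it to this tensor sheaf shows that $s$ is bounded and $\log^{+}|s|^{2}\in W^{1,2}$. The Weitzenb\"ock formula for the holomorphic section $s$ then reads
$$
\Delta|s|^{2}=|\nabla s|^{2}-\mu|s|^{2}\geq -\mu|s|^{2}\geq 0\ ,
$$
so $|s|^{2}$ is bounded and subharmonic on $X_0^{reg}$. Pairing this inequality with a cut-off function $\chi_\epsilon$ supported away from an $\epsilon$-neighborhood of $\Sigma\cup X_0^{s}$ in $\P^N$, integrating by parts, and letting $\epsilon\to 0$, the term $\int\chi_\epsilon\Delta|s|^{2}=\int(\Delta\chi_\epsilon)|s|^{2}$ tends to zero, while the right-hand side tends to $-\mu\int_{X_0}|s|^{2}>0$; this contradiction forces $\overline{\E_\infty}$ to be semistable.

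The main obstacle is precisely the passage to the limit in the integration by parts across the singular set $\Sigma\cup X_0^{s}$: one must control the error terms, which are bounded by $\|s\|_{L^\infty}^{2}\int|\Delta\chi_\epsilon|$, and show that they vanish as $\epsilon\to 0$. This is where the regularity of Proposition \ref{regularity}, namely the $L^\infty$ bound on $s$ and the $W^{1,2}$ control of $\log^{+}|s|^{2}$, together with the fact that $\Sigma$ and $X_0^{s}$ have complex codimension at least two, are indispensable; the requisite cut-offs (with $|\nabla\chi_\epsilon|<1/\epsilon$ and $|\nabla^{2}\chi_\epsilon|<1/\epsilon^{2}$) are exactly those already used in the corollary following Proposition \ref{regularity}, and everything else proceeds as in the smooth case.
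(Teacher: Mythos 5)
Your proposal is correct and follows essentially the same route as the paper: the paper's own proof is precisely "repeat the argument of Corollary \ref{cor:polystable}, with the regularity supplied by Proposition \ref{regularity} and the corollary following it," which is exactly the destabilizing-section-plus-Weitzenb\"ock-plus-cutoff argument you write out, and your derivation of the slope identity from Corollary \ref{Corollary: subbundle of trivial ones locally} is the intended one. You in fact spell out more detail (the explicit construction of the section from a destabilizing subsheaf, the sign bookkeeping for $\mu$, and the error estimate in the integration by parts) than the paper does.
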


\subsubsection{Proof of Theorem \ref{Theorem: analytic degeneration} (II)}
The first part about $\Gr^{**}$ follows exactly as section \ref{Double dual} since we know the limiting sheaf is semistable. The second part follows from Corollary \ref{cor:polystability}.

\subsubsection{Proof of Theorem \ref{Theorem: analytic degeneration} (III)} 
Now we assume $X_0$ is smooth in codimension $2$. From above, we know $\E_\infty$ can be extended as a reflexive sheaf $\overline{\E_\infty}$ over $X_0$ and we also have the following natural maps
$$
q_i^k: \O(-k)^{\oplus N_k} \xrightarrow{(s_i^1, \cdots,  s_i^{N_k})}
\E_i 
$$
By passing to a subsequence, we can take a limit of $\{q_i^k\}_i$ in the corresponding relative Quot scheme, which we denote as 
$$
q^{alg, k}_\infty: \O(-k)^{\oplus N_k} \lra\F^{alg,k}.
$$ 
Similar to  \cite[Sec.\ 4.2]{GSTW:18}, we have
\begin{lem}\label{lem4.5}
$q^{alg,k}_\infty$ induces an isomorphism $\phi:\F^{alg,k} \rightarrow \F^k_\infty$ over $Y$ for $k$ large. 
\end{lem}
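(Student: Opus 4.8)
The plan is to construct a natural surjection from the algebraic flat limit onto the analytic image sheaf, to observe that it is an isomorphism outside a set of codimension $\geq 2$, and then to upgrade it to a global isomorphism by ruling out torsion of the flat limit along the bubbling locus. Throughout we are in the setting of Theorem \ref{Theorem: analytic degeneration}(III), so $X_0$ is smooth in codimension two; writing $U=X_0^{reg}\setminus\Sigma$, the complement $X_0\setminus U=\Sigma\cup X_0^s$ has codimension $\geq 2$, since $\Sigma$ has pure codimension two and $X_0^s$ has codimension $\geq 3$.

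First I would build the comparison map. On $U$ the sections $s_i^j$ converge smoothly to $s_\infty^j$ by Lemma \ref{lp}, so the flat-limit map $q^{alg,k}_\infty$ and the analytic map $q_\infty$ agree on $U$; in particular $\ker q^{alg,k}_\infty$ and $\ker q_\infty$ coincide over $U$. Since $\F^k_\infty=\mathrm{im}(q_\infty)\subset\E_\infty$ is torsion-free, $\ker q_\infty\subset\O(-k)^{\oplus N_k}$ is saturated, and a saturated subsheaf is determined by its restriction to the complement of a codimension-$\geq 2$ set. Hence $\ker q_\infty$ is the saturation of $\ker q^{alg,k}_\infty$, giving $\ker q^{alg,k}_\infty\subseteq\ker q_\infty$ and a canonical surjection $\phi:\F^{alg,k}\twoheadrightarrow\F^k_\infty$ that restricts to an isomorphism over $U$. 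Equivalently, using that $\overline{\E_\infty}$ is reflexive (Proposition \ref{prop3.17}), the generically defined inclusion $\F^{alg,k}|_U\hookrightarrow\overline{\E_\infty}|_U$ extends across the codimension-two set by Hartogs for maps into a reflexive target, and $\phi$ is the induced map onto its image $\F^k_\infty$.

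Next I would identify $\ker\phi$. By construction $\ker\phi=\ker q_\infty/\ker q^{alg,k}_\infty$ is supported on $\Sigma\cup X_0^s$, hence in codimension $\geq 2$, and it is precisely the torsion subsheaf of $\F^{alg,k}$ concentrated along the bubbling locus. Thus the lemma is \emph{equivalent} to the statement that the flat limit $\F^{alg,k}$ is torsion-free. To make this quantitative, I would compare Hilbert polynomials: flatness of the universal quotient over the relative Quot scheme gives $\chi(\F^{alg,k}(m))=P(m)=\chi(\E_t(m))$, while Corollary \ref{Corollary: subbundle of trivial ones locally} shows $\F^k_\infty$ agrees with $\overline{\E_\infty}$ outside codimension two and has $\mu(\F^k_\infty)=\mu(\E)$. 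Because $\phi$ is surjective with $\ker\phi$ supported in codimension $\geq 2$, one has $\chi(\F^{alg,k}(m))=\chi(\F^k_\infty(m))+\chi((\ker\phi)(m))$ with $\chi((\ker\phi)(m))\geq 0$, so establishing the full equality $\chi(\F^k_\infty(m))=P(m)$ forces $\ker\phi=0$ and completes the proof.

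The main obstacle is exactly this last point: ruling out a spurious torsion contribution of $\F^{alg,k}$ along $\Sigma$, i.e.\ showing that the analytic bubbling does not force the algebraic flat limit to acquire torsion across the central fiber. This is the step I would treat in parallel with \cite[Sec.\ 4.2]{GSTW:18}: realize $q^{alg,k}_\infty$ as a point of the relative Quot scheme $R^{\mu ss}_{\Xcal}$, and run a Langton-type argument (cf.\ \cite{Langton:75}) showing that the flat limit of the torsion-free sheaves $\E_i$ taken through the orthonormal section maps $q_i^k$ stays torsion-free; any torsion in $\F^{alg,k}$ could be removed by an elementary Hecke modification, decreasing an integer invariant, which cannot continue indefinitely. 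Combined with the asymptotic Riemann-Roch formula (Corollary \ref{ARR}) this yields $\chi(\F^k_\infty)=P$, hence $\ker\phi=0$, so $\phi$ is the desired isomorphism $\F^{alg,k}\cong\F^k_\infty$ over $X_0$ for all $k$ sufficiently large.
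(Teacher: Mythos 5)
Your first two paragraphs are essentially right, and they follow the same route as the paper's intended proof (which is simply a citation to \cite[Sec.\ 4.2]{GSTW:18}): the kernels of $q^{alg,k}_\infty$ and $q_\infty$ agree generically, $\ker q_\infty$ is saturated because $\F^k_\infty\subset\overline{\E_\infty}$ is torsion-free, so $\ker q_\infty$ is the saturation of $\ker q^{alg,k}_\infty$ and one obtains a surjection $\phi:\F^{alg,k}\twoheadrightarrow\F^k_\infty$ whose kernel is exactly the torsion of the flat limit, supported in codimension $\geq 2$. Your reduction of the lemma to the Hilbert polynomial identity $\chi(\F^k_\infty(m))=P(m)$ is also exactly the right reduction.

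The gap is in your final paragraph, i.e.\ precisely at the step you identify as the main obstacle. A Langton-type argument cannot close it: Langton's method \cite{Langton:75} never shows that a \emph{given} flat limit is torsion-free; it replaces the family by a Hecke-modified one whose central fiber is torsion-free, and the descending-invariant argument only shows that this modification process terminates, i.e.\ that \emph{some} good degeneration exists. If $\F^{alg,k}$ had torsion, Hecke-modifying would just produce a different point of the Quot scheme, and no contradiction arises; relatedly, the limit point $[q^{alg,k}_\infty]$ lies a priori in $\mathrm{Quot}(\mathcal{H},P)$, not in the non-closed locus $R^{\mu ss}_{\Xcal}$, so ``realizing it as a point of $R^{\mu ss}_\Xcal$'' is part of what must be proved. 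In fact the statement is \emph{false} without using the $L^2$-orthonormal normalization of the $q_i^k$: replacing $q_i^k$ by $q_i^k\circ g_i$ with $g_i\in\GL(N_k)$ degenerating leaves the sheaves $\E_i$ untouched but moves the Quot-scheme limit into an orbit closure, where quotients with torsion do occur. So no purely algebraic argument of the kind you propose can work, and the asymptotic Riemann-Roch formula (Corollary \ref{ARR}) cannot substitute either: it controls only the rank and degree terms of the Hilbert polynomial, which (together with Corollary \ref{Corollary: subbundle of trivial ones locally}) rules out torsion in codimension one only, while $\ker\phi$ lives in codimension $\geq 2$ along $\Sigma$. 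What the argument of \cite[Sec.\ 4.2]{GSTW:18} actually does here is analytic: for \emph{every} large twist $m$, take $L^2$-orthonormal bases of $H^0(X_i,\E_i(m))$ and pass to limits as in Lemma \ref{lp}; the uniform $L^\infty$ bounds and convergence away from the measure-zero set $\Sigma$ let the $L^2$ inner products pass to the limit, so the limiting sections are still orthonormal, hence give $P(m)$ linearly independent sections of $\F^m_\infty(m)$. Since $k$ is large, the filtration has stabilized and $\F^m_\infty\subseteq\F^k_\infty$ (Corollary \ref{Corollary: subbundle of trivial ones locally}), whence $\chi(\F^k_\infty(m))=h^0(\F^k_\infty(m))\geq P(m)$ for all large $m$; combined with your exact sequence this forces $\chi((\ker\phi)(m))\leq 0$ and so $\ker\phi=0$. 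Note that this is also the only place where the hypothesis ``$k$ large'' enters, which your proposal never uses.
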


Denote $\F_\infty=\cup_k \F_\infty^k$ which is equal to $\F_\infty^{k}$ for any $k$ large.  Using Theorem \ref{SingularBottChern} and the argument in 
\cite[Sec.\ 4.3]{SibleyWentworth:15}, we have 
\begin{prop}\label{BubblingIsAlgebraic}
$\Sigma^{alg}_b=\mathcal{C}(\F_\infty)$. 
\end{prop}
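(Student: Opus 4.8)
The plan is to compute the limiting class $\lim_i \ch_2(A_i)$ in two independent ways---algebraically through the relative Quot scheme and analytically through the Uhlenbeck defect measure---and to match the two expressions. On each smooth fiber $X_i$ with $t_i\neq 0$, the sheaf $\E_i$ is locally free, and for $k$ large the chosen basis of $H^0(X_i,\E_i(k))$ exhibits $\E_i$ as the image of $q_i^k:\O(-k)^{\oplus N_k}\to\E_i$, giving a short exact sequence $0\to\ker q_i^k\to\O(-k)^{\oplus N_k}\to\E_i\to 0$. Additivity of the Chern character together with the Chern--Weil identity $\ch_2(\E_i)=\ch_2(A_i)$ then yields $\ch_2(A_i)=\ch_2(\O(-k)^{\oplus N_k})-\ch_2(\ker q_i^k)$. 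On the central fiber, Corollary \ref{Corollary: subbundle of trivial ones locally} and Lemma \ref{lem4.5} furnish the analogous sequence $0\to\ker q_\infty\to\O(-k)^{\oplus N_k}\to\F_\infty\to 0$, with $\F_\infty=\F_\infty^k$ for $k$ large.

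First I would feed in the algebraic side. Since $\{\ker q_t^k\}$ is the restriction to the fibers of the universal kernel on the relative Quot scheme, it is a flat family over $\P^1$, so its numerical invariants are locally constant; in particular $\ch_2(\ker q_i^k)$ converges to $\ch_2(\ker q^{alg,k}_\infty)$, which Lemma \ref{lem4.5} identifies with $\ch_2(\ker q_\infty)$ in $H_{2n-4}(X_0^{reg})$. Passing to the limit in the $X_i$-sequence gives $\lim_i\ch_2(A_i)=\ch_2(\O(-k)^{\oplus N_k})-\ch_2(\ker q_\infty)$. Applying the singular Bott--Chern formula (Proposition \ref{SingularBottChern}) to the filtration $0\to\ker q_\infty\to\O(-k)^{\oplus N_k}\to\F_\infty\to 0$, this rearranges to $\lim_i\ch_2(A_i)=\ch_2(A_\infty)-\mathcal{C}(\F_\infty)$.

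Next comes the analytic side, which I expect to be the main obstacle. Following the Chern--Weil computation of \cite[Sec.\ 4.3]{SibleyWentworth:15} together with the structure of the Uhlenbeck limit---the weak convergence $|F_{A_i}|^2\dVol_i\rightharpoonup|F_{A_\infty}|^2\dVol+\nu$ with $\nu|_{X_0^{reg}}=\sum_k m_k^{an}\Sigma_k$---one shows that the second Chern forms converge as currents on $X_0^{reg}$ with a defect supported on $\Sigma_b$, namely $\lim_i\ch_2(A_i)=\ch_2(A_\infty)-\Sigma_b$, the sign being fixed by the concentration of $c_2$ (equivalently of $|F|^2$) along the bubbling locus. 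The delicate points are to justify this convergence across the degeneration of the base---using the fixed diffeomorphisms on the exhaustion $U_j\subset X_0^{reg}$ and the uniform Sobolev constant from \cite{Simon:83}---and to control the behaviour near $X_0^s$. Here the hypothesis $\codim X_0^s\geq 3$ of part (III) is essential: since $\Sigma_b$ is pure codimension two, its components meet $X_0^{reg}$ in a dense open set, so all of the defect is confined to the regular locus.

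Comparing the two computations gives $\mathcal{C}(\F_\infty)=\Sigma_b$ as homology classes in $H_{2n-4}(X_0^{reg})$. To promote this to the asserted equality of cycles with matching multiplicities, I would restrict to a generic two-dimensional slice $D$ transverse to a fixed component at a generic point; transversality within the smooth loci of both $X_0$ and $\F_\infty$ is available because $\codim X_0^s\geq 3$ while the components are codimension two. On $D$ the class identity becomes an equality of zero-cycles concentrated at the point $D\cap\Sigma_k$, and the surface case of the analytic--algebraic correspondence identifies the analytic multiplicity $m_k^{an}$ read off from the local energy concentration with the algebraic multiplicity $m_k^{alg}=h^0(D,\mathcal{I}|_D)$ of Definition \ref{Definition:algebraic blow-up locus}. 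Running this over a generic point of every codimension-two subvariety occurring on either side---which also forces the two supports to coincide, since a component with $m_k^{alg}>0$ but no bubbling would violate the local identity---yields the equality of cycles, i.e.\ the claimed identity $\Sigma_b^{alg}=\mathcal{C}(\F_\infty)$.
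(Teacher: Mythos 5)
Your proposal follows essentially the same route as the paper, whose proof of this proposition is precisely the combination of the singular Bott--Chern formula (Proposition \ref{SingularBottChern}) with the argument of \cite[Sec.\ 4.3]{SibleyWentworth:15}: compute $\lim_i \ch_2(A_i)$ once algebraically through the relative Quot-scheme limit and once analytically through the Uhlenbeck defect measure, and then upgrade the resulting class identity to an equality of cycles by slicing along generic transverse surfaces, where the analytic and algebraic multiplicities are known to agree. Your sign bookkeeping and your use of the codimension-three hypothesis on $X_0^s$ match the paper's setup, so this is a faithful reconstruction of the cited argument.
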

We also observe 
\begin{lem}\label{RestrictionOfQuotScheme}
By restricting to a smooth family of surfaces $\mathcal{Y} \subset\mathcal{X}$ and taking a discrete subsequence of smooth surfaces $Y_i:=\mathcal{Y} \cap X_i$ which converges to a smooth surface $Y_\infty\subset X_0$ on the central fiber, we can assume the following 
\begin{enumerate}
\item $\E_0|_{Y_\infty}$ and $\F_\infty|_{Y_\infty}$ are torsion-free semistable with the same Hilbert polynomial;

\item $[\mathcal{H}|_{Y_i}\otimes \O_{Y_i}(-m) \rightarrow \E_i|_{Y_i}]$ converges to $[\mathcal{H}|_{Y_\infty}\otimes \O_{Y_\infty}(-m) \rightarrow \F_\infty|_{Y_\infty}]$ in the corresponding relative Quot scheme.
\end{enumerate}
In particular, $\mathcal{C}(\E_0|_{Y_\infty})=\mathcal{C}(\F_\infty|_{Y_\infty})$.
\end{lem}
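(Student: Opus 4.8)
The plan is to choose the family of surfaces $\mathcal{Y}$ generically enough that all the assertions become consequences of the corresponding facts already established on $\mathcal{X}$, together with the (already understood) theory of semistable degenerations over a single smooth projective surface. Concretely, I would take $\mathcal{Y}=\mathcal{X}\cap H_1\cap\cdots\cap H_{n-2}$ for generic hypersurfaces $H_j$ in the class $\O(l)$ with $l\gg 0$. A Bertini-type argument guarantees that, for a suitable subsequence $t_i\to 0$, the slices $Y_i=\mathcal{Y}\cap X_i$ are smooth surfaces converging to a smooth surface $Y_\infty=\mathcal{Y}\cap X_0$ that avoids both $X_0^s$ and the bubbling set $\Sigma$; since the restriction of a torsion-free sheaf to a generic high-degree surface is again torsion-free, both $\E_0|_{Y_\infty}$ and $\F_\infty|_{Y_\infty}$ are torsion-free.

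For the semistability in (1), recall that $\overline{\E_\infty}$ is semistable of slope $\mu(\F)$ by Corollary \ref{Semistablity}, and that it agrees with $\F_\infty$ outside a codimension-two set by Corollary \ref{Corollary: subbundle of trivial ones locally}; hence $\F_\infty$ is itself semistable. Together with the assumed semistability of $\E_0$, the relative Mehta-Ramanathan restriction theorem \cite{MehtaRamanathan:84, Flenner:84} applied for $l\gg 0$ shows that $\E_0|_{Y_\infty}$ and $\F_\infty|_{Y_\infty}$ are semistable. The two restrictions share a Hilbert polynomial because $\{\E_i|_{Y_i}\}$ is a flat family of sheaves on surfaces in a fixed class with fixed Hilbert polynomial, and both $\E_0|_{Y_\infty}$ and $\F_\infty|_{Y_\infty}$ arise as flat limits of this family. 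This establishes (1).

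Assertion (2) is obtained by transporting the convergence already available on $\mathcal{X}$. By construction $q_i^k\to q^{alg,k}_\infty$ in the relative Quot scheme over $\mathcal{X}$, and by Lemma \ref{lem4.5} the limiting image $\F^{alg,k}$ is isomorphic to $\F^k_\infty=\F_\infty$ for $k$ large. Restricting the universal quotient to $\mathcal{Y}$ and invoking the properness of the relative Quot scheme over the base, the restricted family $[\mathcal{H}|_{Y_i}\otimes\O_{Y_i}(-m)\to \E_i|_{Y_i}]$ acquires a flat limit on $Y_\infty$; the point is that for generic $\mathcal{Y}$ restriction commutes with this flat limit, so the limit is exactly $[\mathcal{H}|_{Y_\infty}\otimes\O_{Y_\infty}(-m)\to \F_\infty|_{Y_\infty}]$. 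Here I would follow Section $4.2$ of \cite{GSTW:18}: genericity of $\mathcal{Y}$ together with the survival of $m$-regularity and of the induced isomorphism on global sections under restriction for $l\gg 0$ forces the restricted Quot points to converge to the restriction of the global limit. This gives (2).

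Finally, the concluding identity follows from the surface theory. By (1) and (2), both $[\mathcal{H}|_{Y_\infty}\to \E_0|_{Y_\infty}]$ and $[\mathcal{H}|_{Y_\infty}\to \F_\infty|_{Y_\infty}]$ are flat limits of the same family $\{\E_i|_{Y_i}\}$ of semistable torsion-free sheaves on the smooth projective surfaces $Y_i$, and hence have the same image under the moduli map $\Phi$. Since the fibers of $\Phi$ over a smooth surface are characterized precisely by the pair $(\Gr^{**},\mathcal{C})$ (see \cite{HuybrechtsLehn:10} and the discussion preceding Proposition \ref{prop4.12}), the two limits have equal associated cycles, i.e.\ $\mathcal{C}(\E_0|_{Y_\infty})=\mathcal{C}(\F_\infty|_{Y_\infty})$. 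The main obstacle is precisely the commutation of restriction with the flat limit in step (2): one must arrange the genericity of $\mathcal{Y}$ so that the analytic limit $\F_\infty$ and the algebraic Quot-limit restrict compatibly, which requires the uniform survival of $m$-regularity and boundedness along the degenerating family. This is the technical heart, and it is exactly what the surface-by-surface analysis of \cite{GSTW:18} is designed to control.
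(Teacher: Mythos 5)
Your treatment of part (1) and of the final cycle identity runs parallel to the paper's: semistability of the restrictions via the restriction theorem, equality of Hilbert polynomials from flatness, and then the conclusion from the fact (Proposition \ref{prop4.12} and the discussion before it) that the relative moduli map is continuous near the central fiber and that fibers of $\Phi$ over a smooth surface are exactly the pairs $(\Gr^{**},\Ccal)$. The divergence is in part (2), and that is where your argument has a genuine gap. You reduce (2) to the claim that ``restriction commutes with the flat limit'' for generic $\mathcal{Y}$, and you defer that claim to genericity plus ``survival of $m$-regularity and of the isomorphism on global sections.'' But $m$-regularity and boundedness only guarantee that the restricted quotients live in a fixed Quot scheme and have a convergent subsequence there; they say nothing about \emph{which} point the subsequence converges to, which is precisely what is at stake. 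As written, the decisive step is asserted rather than proved.

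The paper closes exactly this step by a rigidity argument that never needs any commutation statement: by properness one takes \emph{some} limit $[\mathcal{H}|_{Y_\infty}\otimes \O_{Y_\infty}(-m)\rightarrow \F'_\infty]$ of the restricted Quot points; the mechanism of \cite[Lemma 4.4]{GSTW:18} (convergence of the defining sections to the sections cutting out the analytic limit) produces a \emph{surjection} $\F'_\infty \rightarrow \F_\infty|_{Y_\infty}$; and since $\F'_\infty$, being a flat limit of $\E_i|_{Y_i}$, has the same Hilbert polynomial as $\F_\infty|_{Y_\infty}$ (this is where part (1) enters), a surjection between sheaves with equal Hilbert polynomial is an isomorphism. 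So the identification of the limit is forced a posteriori, whatever the limit is. Your route could in principle be repaired --- restriction defines a morphism on the open locus of the Quot scheme where the universal quotient restricts to $\mathcal{Y}$ with the expected Hilbert polynomial, the limit point $q^{alg,k}_\infty$ of Lemma \ref{lem4.5} lies in that locus for $\mathcal{Y}$ chosen generically \emph{after} fixing the convergent subsequence, and continuity of a morphism then gives the commutation --- but that is an additional construction you would have to carry out, and it is not what the tools you cite provide. The surjection-plus-Hilbert-polynomial argument is both cheaper and more robust, which is presumably why the paper uses it.
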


\begin{proof}
The first statement follows from the restriction theorem and a
    straightforward calculation using that $\E_0$ and $\F_\infty$ have the same Hilbert polynomial. For the second statement, we can always take a limit of the sequence $[\mathcal{H}|_{Y_i}\otimes \O_{Y_i}(-m) \rightarrow \E_i|_{Y_i}]$ in the corresponding Quot scheme as $[\mathcal{H}|_{Y_\infty}\otimes \O_{Y_\infty}(-m) \rightarrow \F'_\infty]$; furthermore, $\F'_\infty$ and $\E_0|_{Y_\infty}$ have the same Hilbert polynomial. As \cite[Lemma 4.4]{GSTW:18}, there exists a surjective map $\F'_\infty \rightarrow \F_\infty|_{Y_\infty}$ which now has to be an isomorphism since they have the same Hilbert polynomial. The last statement follows from Proposition \ref{prop4.12} which implies $\F_\infty|_{Y_\infty}$ and $\E_0|_{Y_\infty}$ give the same points in $M^{\mu ss}_{Y_\infty}$ by continuity. 
\end{proof}

Similar to the uniqueness of the cycle in Theorem \ref{Theorem: algebraic degeneration}, it follows from Proposition \ref{prop5.1}, Lemma \ref{RestrictionOfQuotScheme} and Proposition \ref{BubblingIsAlgebraic} 
\begin{cor}
$\mathcal{C}(\E_0)=\mathcal C(\F_\infty)$ for any semistable degeneration $\E$ of $\{\E_t\}_t$. In particular, $\Sigma_b^{alg}=\mathcal{C}(\E_0).$
\end{cor}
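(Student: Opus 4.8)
The plan is to establish the single equality $\mathcal{C}(\E_0) = \mathcal{C}(\F_\infty)$, after which the ``in particular'' clause is immediate: Proposition \ref{BubblingIsAlgebraic} already gives $\Sigma_b^{alg} = \mathcal{C}(\F_\infty)$, so $\mathcal{C}(\E_0) = \mathcal{C}(\F_\infty)$ at once yields $\Sigma_b^{alg} = \mathcal{C}(\E_0)$. Thus the whole content is the comparison of two finite effective codimension-two cycles on $X_0$, and I would carry this out by reducing to the surface case in exactly the manner of the uniqueness-of-cycle portion of Theorem \ref{Theorem: algebraic degeneration}. Note that $\F_\infty$ is a semistable sheaf on $X_0$ (by Corollary \ref{Semistablity} together with the identification in Lemma \ref{lem4.5}), so $\mathcal{C}(\F_\infty)$ is well defined in the sense of Definition \ref{Definition:algebraic blow-up locus} and the transverse-restriction machinery applies to it.

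First I would argue by contradiction, assuming $\mathcal{C}(\E_0) \neq \mathcal{C}(\F_\infty)$. Since both are finite sums of irreducible codimension-two subvarieties with multiplicities, they must differ along some component $\Sigma_k$, either in whether $\Sigma_k$ appears or in the value of its multiplicity $m_k^{alg}$. I would then invoke the transverse-restriction mechanism of Proposition \ref{prop5.1}: choosing a generic family of smooth surfaces $\mathcal{Y}\subset\Xcal$ transverse to $\mathcal{C}(\E_0)\cup\mathcal{C}(\F_\infty)$, transversality guarantees that on a generic slice $Y_\infty=\mathcal{Y}\cap X_0$ the induced $0$-cycles record exactly the ambient multiplicities $m_k^{alg}=h^0(D,\mathcal{I}|_{D})$ of Definition \ref{Definition:algebraic blow-up locus}. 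Hence the assumed discrepancy upstairs would force $\mathcal{C}(\E_0|_{Y_\infty}) \neq \mathcal{C}(\F_\infty|_{Y_\infty})$ on the surface.

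This contradicts Lemma \ref{RestrictionOfQuotScheme}, which asserts that a generic such surface family can be chosen so that $[\mathcal{H}|_{Y_i}\otimes\O_{Y_i}(-m)\to\E_i|_{Y_i}]$ converges in the relative Quot scheme to $[\mathcal{H}|_{Y_\infty}\otimes\O_{Y_\infty}(-m)\to\F_\infty|_{Y_\infty}]$, and therefore gives precisely $\mathcal{C}(\E_0|_{Y_\infty}) = \mathcal{C}(\F_\infty|_{Y_\infty})$. The contradiction forces $\mathcal{C}(\E_0) = \mathcal{C}(\F_\infty)$, and combining with Proposition \ref{BubblingIsAlgebraic} completes the proof.

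The main obstacle, just as in the algebraic uniqueness argument, is ensuring that the surface restriction faithfully detects the ambient cycle. Concretely, one must arrange that $\mathcal{Y}$ is simultaneously transverse to both supports, so that no cancellation or drop of multiplicity occurs under restriction and Definition \ref{Definition:algebraic blow-up locus} is computed correctly on $Y_\infty$, and at the same time compatible with the Quot-scheme convergence demanded by Lemma \ref{RestrictionOfQuotScheme} and by the identification $\F^{alg,k}\cong\F^k_\infty$ of Lemma \ref{lem4.5}. Since the loci to avoid form a countable union of proper subvarieties in the parameter space of surfaces, a generic choice meets both constraints; verifying this joint genericity and compatibility is the technical heart of the reduction.
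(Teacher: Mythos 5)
Your proof is correct and takes essentially the same route as the paper: the paper's one-line proof cites exactly the combination you use --- Proposition \ref{prop5.1} to detect any cycle discrepancy on generic transverse surface slices, Lemma \ref{RestrictionOfQuotScheme} to show those slices in fact carry equal cycles, and Proposition \ref{BubblingIsAlgebraic} for the ``in particular'' clause. Your contradiction argument and joint-genericity remark simply flesh out what the paper compresses into ``similar to the uniqueness of the cycle in Theorem \ref{Theorem: algebraic degeneration}.''
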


\section{The Mehta-Ramanathan restriction theorem}\label{Section: MR theorem}
In this section, as an application,  
we will study the relationship between the MR restriction theorem of
semistable bundles with the degeneration of the semistable bundles through the deformation to the projective cones. 

\subsection{Deformation to projective cones} 
Fix $(X, L, \omega)$ to be a Hodge manifold, i.e. $[\omega]=c_1(L)$ for some  very ample line bundle $L$ and let $V$ be a smooth hypersurface define by a section in $H^0(X, L)$.  
We recall the  construction of the  projective cone. 
Through the embedding 
$$X \lra\P^N:=\P(H^0(X, L)^*)\ ,$$ $V$ can be realized
the intersection of $X$ with a hyperplane $\P^{N-1}$ in $\P^N$. Here $\P^N$ is endowed with the Fubini-Study metric given by the
$L^2$ metric. Using the $L^2$ metric, we can write $\P^N=\C^N\cup \P^{N-1}$
and a point in $\P^N$ will be denotes by $Z:=[Z_1, \cdots, Z_{N+1}]$. We
also denote $\widehat Z:=[Z_1, \cdots, Z_{N}, 0]$. Now we can take the affine cone $C(V)$ given by $V$ in
$\C^N$ and the projective cone $\overline{C(V)}$ is given by adding a copy
of $V$ to $C(V)$ at infinity. Denote by $\omega_0$ the induced
metric on $\overline{C(V)}\subset \P^N$. 
In general, $\overline{C(V)}$ is
not normal. We have
the following well-known lemma (see  \cite[I-Ex.\ 3.18, II-Ex.\ 5.14]{Hartshorne:77}).

\begin{lem}
The following are equivalent:
\begin{enumerate}
\item $\overline{C(V)}$ is normal; 
\item
$H^0(\P^N, \O(k)) \rightarrow H^0(V, \O(k)|_V)$ is surjective for each $k$;
\item The homogeneous coordinate ring of $X$ in $\P^N$ is an integrally closed domain. 
\end{enumerate}
\end{lem}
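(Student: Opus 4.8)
The plan is to establish the equivalences through the standard dictionary relating the normality of a projective cone to the projective normality of its base. Write $R=\bigoplus_{k\ge 0}R_k$ for the homogeneous coordinate ring of $V\subset\P^{N-1}$, where $R_k$ is the image of $H^0(\P^{N-1},\O(k))\to H^0(V,\O_V(k))$. The affine cone over $V$ is then $\Spec R$, and the projective cone has homogeneous coordinate ring $R[Z_{N+1}]$, since the vertex is $v=[0:\cdots:0:1]$ and the ideal of $\overline{C(V)}$ is generated by that of $V$ in the first $N$ variables. Because $H^0(\P^N,\O(k))\to H^0(\P^{N-1},\O(k))$ is surjective, the map in (2) has the same image as $H^0(\P^{N-1},\O(k))\to H^0(V,\O_V(k))$, so the cohomological clause of (2) says exactly that each $R_k$ is all of $H^0(V,\O_V(k))$.

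For (1)$\Leftrightarrow$(3), I would localize the question at the vertex. Projecting away from $v$ realizes $\overline{C(V)}\setminus\{v\}$ as the total space of a line bundle over $V$, a smooth fibration; likewise the punctured affine cone $\Spec R\setminus\{0\}$ is a $\C^\ast$-bundle over $V$. Thus away from the vertex normality of the cone is equivalent to normality of $V$, and all the remaining content sits at $v$, where normality of $\overline{C(V)}$ coincides with normality of $\Spec R$ at the origin, i.e.\ with $R$ being an integrally closed domain. Since integral closedness is unaffected by adjoining the polynomial variable $Z_{N+1}$, this is the same as $R[Z_{N+1}]$ being integrally closed; and as $V=\operatorname{Proj}R$ is automatically normal once $R$ is, the vertex condition already subsumes the normality of $V$. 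This yields (1)$\Leftrightarrow$(3).

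For (2)$\Leftrightarrow$(3), I would run the argument of \cite[Ex.~5.14]{Hartshorne:77}: assuming $V$ normal, the integral closure $\overline R$ of $R$ inside its fraction field is exactly the full section ring $\bigoplus_{k\ge 0}H^0(V,\O_V(k))$. Since $R_k$ is by construction the image of $H^0(\P^{N-1},\O(k))$, the equality $R=\overline R$ holds precisely when every restriction map is surjective, which is the cohomological clause of (2). Combined with the previous paragraph this closes the loop of equivalences.

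The step I expect to be the main obstacle is the vertex analysis, namely checking that normality of $\overline{C(V)}$ at $v$ is genuinely equivalent to integral closedness of $R$ and not merely to normality of $V$; this is exactly where one must invoke Serre's $R_1+S_2$ criterion, or equivalently the identification of $\overline R$ with the section ring, and it is the locus of the classical normal-but-not-projectively-normal examples. The only other point requiring care is the reconciliation of the coordinate ring in (3) with the original variety $X$: here I would use that $\overline{C(V)}$ is the flat degeneration of $X\subset\P^N$ furnished by the deformation to the projective cone, so that the two share a Hilbert function and the graded structure relates the integral closedness of $S(X)$ to that of $R[Z_{N+1}]$.
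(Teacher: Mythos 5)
The paper offers no argument for this lemma at all---it is stated as ``well-known'' with a pointer to \cite[Ex.\ 5.14]{Hartshorne:77}---so the comparison is really between your write-up and that citation. Your first three paragraphs are a correct expansion of exactly that standard argument: with $R=\bigoplus_k R_k$ the homogeneous coordinate ring of $V\subset\P^{N-1}$, you establish that (1) and (2) are each equivalent to $R$ (equivalently $R[Z_{N+1}]$) being an integrally closed domain, via the line-bundle structure of $\overline{C(V)}\setminus\{v\}$ over $V$, the localization of the normality question at the vertex, and the identification of the integral closure of $R$ with the section ring $\bigoplus_k H^0(V,\O_V(k))$ when $V$ is normal. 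All of this is sound and is the intended content of the lemma.

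The genuine gap is your final paragraph, where you try to match your condition ``$R[Z_{N+1}]$ integrally closed'' with the literal condition (3), which speaks of the homogeneous coordinate ring $S(X)$ of $X$ in $\P^N$. The bridge you propose---that $\overline{C(V)}$ is the flat limit of $X$ and hence shares a Hilbert function with it---is circular and false in general: by Lemma \ref{lem1.4}, the flat limit $X_0$ equals $\overline{C(V)}$ precisely when the maps $H^0(X,kL)\to H^0(V,kL|_V)$ are all surjective, which (via $H^0(\P^N,\O(k))\twoheadrightarrow H^0(X,kL)$) is the very condition (2) under discussion; the paper's own example of a genus $g\geq 1$ curve with $V$ a point, where $\overline{C(V)}=\P^1$ while $X_0$ has arithmetic genus $g$, shows the Hilbert functions need not agree. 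Moreover, no repair of this step is possible, because (3) as printed is not equivalent to (1) and (2). Concretely, let $X$ be a smooth curve of genus $g\geq 1$ and $\deg L\geq 2g+1$: then $X\subset\P^N$, $N=\deg L-g$, is projectively normal (Castelnuovo), so $S(X)$ is an integrally closed domain and (3) holds; but $V$ consists of $\deg L$ points, the cokernel of $H^0(\P^N,\O(1))\to H^0(V,\O_V(1))$ is $H^1(X,\O_X)\neq 0$, so (2) fails, and $\overline{C(V)}$ is a union of $\deg L$ concurrent lines, hence not normal, so (1) fails. (Any projectively normal embedding of an irregular variety with $H^1(X,L)=0$ exhibits the same phenomenon in higher dimension.) The correct reading of (3) is therefore the homogeneous coordinate ring of $V$ in $\P^{N-1}$---equivalently of $\overline{C(V)}$ in $\P^N$, i.e.\ your $R[Z_{N+1}]$---a slip in the paper's wording, confirmed by how the lemma is used later (for $X=\P^n$ and $V$ a degree $k$ hypersurface, what gets verified is projective normality of $V$). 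With that reading your proof is complete once the last paragraph is deleted; the only salvageable remnant of the flat-limit idea is the one-way implication that normality of $S(\overline{C(V)})$ forces normality of $S(X)$ (via the associated-graded degeneration of $S(X)$, available exactly when (2) holds), never the converse.
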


Let $\widehat{\mathcal X}$ 
be the blow-up of $X\times \P^1$ along $V\times \{0\}$.  Denote 
$q:\widehat{\mathcal X} \rightarrow \P^1$ as the natural projection map. We
know $q^{-1}(t)=X$ for $t\neq 0$ and $q^{-1}(0)=X\cup_{V} \overline{N_V}$
where $\overline{N_V}$ denoes the space by adding an infinity copy of $V$ to $N_V$ and $X$ are attached along
$V\subset X$ and the infinity section $V$. It
is well-known that we can blow down $\widehat\Xcal$ by contracting a copy of
$X$ in the central fiber to get $\mathcal X$, 
which now is a flat family of irreducible varieties. 
Let $p: \Xcal \rightarrow \P^1$  denote the natural projection map and
denote $X_t=p^{-1}(t)$ as before. The subtlety is that  $X_0$ might not be
normal in general, and so in order to 
 apply the degeneration results we obtained in Section \ref{Section:degeneration}
 we need to make this extra assumption.

Let $\mathcal{L}$ the line bundle obtained by first pulling back $L$ 
over $X\times \P^1,$ then pulling back to $\widehat{\mathcal{X}}$ and tensoring
it with $-\overline{N_V}$, finally pushing down to $\mathcal{X}$ through
the blow-down map.  We have the following result 
and examples from \cite{ConlonHein:14, LiChi:20}.
\begin{lem}\label{lem1.4}

\begin{enumerate}
\item There exists a natural map $\phi: \overline{C(V)} \rightarrow X_0$ which is an isomorphism away from the vertex. It is an isomorphism if and only if $H^0(X, \O(kL)) \rightarrow H^0(V, \O(kL|_V))$ is surjective for each $k$. 

\item $\Xcal$ is a flat family and there exists an embedding $\iota: \Xcal \rightarrow \P^N\times \P^1$ where $\mathcal L = \iota^* \pi_1^*\O_{\P^N}(1)$. Here $\pi_1: \P^N \times \P^1 \rightarrow \P^N$ denotes the natural projection map. 
\end{enumerate}
\end{lem}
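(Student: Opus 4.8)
The plan is to establish both parts on the blow-up $\widehat{\Xcal}=\mathrm{Bl}_{V\times\{0\}}(X\times\P^1)$ and then descend along the blow-down. Write $\sigma\colon\widehat{\Xcal}\to X\times\P^1$ for the blow-up and recall from the construction (as in \cite{ConlonHein:14, LiChi:20}) that the central fiber of $q$ is $\widetilde X\cup_V E$, where $E=\overline{N_V}=\P(N_V\oplus\O)$ is the exceptional divisor and $\widetilde X\cong X$ is the proper transform of $X\times\{0\}$, meeting $E$ along the section $V\subset E$. The whole argument hinges on the behavior of $\widehat{\mathcal L}:=\sigma^\ast\mathrm{pr}_X^\ast L\otimes\O(-E)$ on these two components. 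Since $V$ is cut out by a section of $L$ we have $\O_X(-V)\cong L^{-1}$; as $\sigma|_{\widetilde X}$ is an isomorphism onto $X$ and $E\cap\widetilde X=V$, this gives the key identity $\widehat{\mathcal L}|_{\widetilde X}\cong L\otimes\O_X(-V)\cong\O_X$. On $E$, by contrast, $\widehat{\mathcal L}$ restricts to a line bundle that is positive along the $\P^1$-fibers of $\pi\colon E\to V$.

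First I would use this to produce $\Xcal$ and $\mathcal L$. The triviality $\widehat{\mathcal L}|_{\widetilde X}\cong\O_X$ together with fiberwise positivity on $E$ shows that, for $m\gg0$, the relative linear system $|\widehat{\mathcal L}^m|$ over $\P^1$ contracts exactly $\widetilde X$ to a point and is an embedding elsewhere; this is the blow-down $\beta\colon\widehat{\Xcal}\to\Xcal$ of the excerpt. Because $\widehat{\mathcal L}$ is trivial on the only positive-dimensional fiber of $\beta$, the rigidity lemma gives $\widehat{\mathcal L}\cong\beta^\ast\mathcal L$ for a relatively ample line bundle $\mathcal L=\beta_\ast\widehat{\mathcal L}$ on $\Xcal$. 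The central fiber is then $X_0=p^{-1}(0)$, obtained from $E=\overline{N_V}$ by collapsing the infinity section $V$ to a single point, the vertex, and polarized by $\mathcal L|_{X_0}$.

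For part (1), I would construct $\phi$ by comparing graded rings. Collapsing $V\subset\overline{N_V}$ realizes $X_0$ together with $\mathcal L|_{X_0}$, and flatness with constancy of $h^0(X_t,\mathcal L^m)$ identifies $H^0(X_0,\mathcal L^m)$ with the flat limit of $H^0(X,L^m)$. On the other hand $\overline{C(V)}\subset\P^N$ is $\mathrm{Proj}$ of its homogeneous coordinate ring, whose degree-$m$ part is the image of $\Sym^mH^0(X,L)$, and which splits according to the power of the cone variable $Z_{N+1}$ into the images of the restriction maps $H^0(X,kL)\to H^0(V,kL|_V)$. The natural comparison between the homogeneous coordinate ring of the cone and the section ring of $(X_0,\mathcal L)$ yields the morphism $\phi\colon\overline{C(V)}\to X_0$. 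Away from the vertex both rings describe the total space of $N_V$ over $V$, so $\phi$ is an isomorphism there; globally $\phi$ is an isomorphism exactly when the two rings agree in every degree, which by the splitting above is precisely the surjectivity of $H^0(X,kL)\to H^0(V,kL|_V)$ for all $k$.

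For part (2), flatness of $p\colon\Xcal\to\P^1$ is immediate: $\Xcal$ is integral, being the blow-down of the irreducible $\widehat{\Xcal}$, and dominates the smooth curve $\P^1$, so $p$ is flat with irreducible fibers $X$ (for $t\neq0$) and the cone $X_0$. Relative very ampleness of $\mathcal L$ then gives $\iota\colon\Xcal\hookrightarrow\P(p_\ast\mathcal L)$ with $\mathcal L=\iota^\ast\O(1)$; since $h^0(X_t,\mathcal L)=h^0(X,L)=N+1$ is constant and the coordinate sections $Z_1,\dots,Z_{N+1}$ extend across $t=0$, $p_\ast\mathcal L$ is a twist of the trivial rank-$(N+1)$ bundle, whence $\P(p_\ast\mathcal L)\cong\P^N\times\P^1$ and $\mathcal L=\iota^\ast\pi_1^\ast\O_{\P^N}(1)$. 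A clean alternative that makes the embedding manifest is to realize $\Xcal$ as the closure in $\P^N\times\P^1$ of the orbit of $X$ under the one-parameter subgroup of $\PGL_{N+1}$ scaling $Z_{N+1}$, so that $\mathcal L=\iota^\ast\pi_1^\ast\O(1)$ and flatness hold by construction and $X_0$ is the flat limit. I expect the main obstacle to be part (1): tracking sections precisely through the blow-up and the contraction and showing that the failure of $\phi$ to be an isomorphism at the vertex is measured exactly by the cokernels of $H^0(X,kL)\to H^0(V,kL|_V)$. This requires care with the projective-bundle conventions for $\overline{N_V}$, with cohomology and base change at the possibly non-normal central fiber (to guarantee the needed constancy of $h^0$), and with the ampleness of $\mathcal L$ across the vertex; the descent of $\mathcal L$ and part (2) are comparatively routine once the identity $\widehat{\mathcal L}|_{\widetilde X}\cong\O_X$ is in hand.
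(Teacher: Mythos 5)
The paper itself offers no proof of this lemma: it is quoted as a known result, with references to \cite{ConlonHein:14, LiChi:20}, so your argument has to stand on its own. Your part (2) is fine in outline --- the identity $\widehat{\mathcal L}|_{\widetilde X}\cong\O_X$, contraction by a relatively semi-ample power, flatness from integrality of $\Xcal$ over a smooth curve, and the orbit-closure description are all correct, modulo exactly the care points you flag (very ampleness of $\mathcal L|_{X_0}$ and local freeness of $p_*\mathcal L$ of rank $N+1$, which do not come for free). The genuine gap is in part (1), at the ring-theoretic identification on which your ``iff'' rests.

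You claim the homogeneous coordinate ring of $\overline{C(V)}\subset\P^N$ splits, according to powers of $Z_{N+1}$, into the images of the restriction maps $H^0(X,kL)\to H^0(V,kL|_V)$. It does not: its degree-$m$ piece is $\bigoplus_{k\le m}(S_V)_k\,Z_{N+1}^{m-k}$, where $(S_V)_k$ is the image of $\Sym^k H^0(X,L)\to H^0(V,kL|_V)$, which is in general strictly smaller than the image of $H^0(X,kL)$. The graded ring whose pieces \emph{are} the images of the restriction maps is the ring of $X_0$ itself: writing $s$ for the section cutting out $V$, one computes near $t=0$ that $q_*\widehat{\mathcal L}^{\,m}=\bigoplus_{k}t^{k}\,s^{m-k}H^0(X,kL)$, whence
$$
X_0\;=\;\mathrm{Proj}\Bigl(\bigoplus_m\bigoplus_{k=0}^m \mathrm{Im}\bigl(H^0(X,kL)\to H^0(V,kL|_V)\bigr)\Bigr),
$$
while the ring with full graded pieces $H^0(V,kL|_V)$ is the (normalization of the) projective cone. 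So, taken literally, your comparison sets the ring of $X_0$ against itself: it would show the ``two rings'' agree in every degree and hence that $\phi$ is \emph{always} an isomorphism --- contradicting the paper's own example of a genus $g\ge 1$ curve with $V$ a point, where $\overline{C(V)}=\P^1$ but $X_0$ is singular. The correct argument compares the images of the restriction maps (the ring of $X_0$) with the full spaces $H^0(V,kL|_V)$ (the ring of the normal cone); equality in every degree is then literally the surjectivity hypothesis of the lemma, and the inclusion of the former ring into the latter is what produces $\phi$ with the stated direction --- note that the opposite inclusion $(S_V)_k\subset\mathrm{Im}(H^0(X,kL)\to H^0(V,kL|_V))$ gives a natural map $X_0\to\overline{C(V)}$ the \emph{other} way, so your ``natural comparison'' does not even determine the direction of $\phi$ without this bookkeeping. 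Relatedly, the asserted ``constancy of $h^0(X_t,\mathcal L^m)$'' is unjustified (semicontinuity only gives $h^0(X_0,\mathcal L^m|_{X_0})\ge h^0(X,mL)$); the pushforward computation above makes it unnecessary.
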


\begin{exam}
In the construction, let $X$ be a Riemann surface of $g\geq 1$ and $V$ be a point $p$. In this case, one readily checks that $\overline{C(V)}=\P^1$ while $X_0$ has to be singular since its arithmetic genus is $g\geq 1$. 
\end{exam}

\begin{exam}
In the construction, if $X$ is Fano or a simply connected Calabi-Yau manifold, $X_0$ always coincides with $\overline{C(V)}$. 
\end{exam}

\begin{exam}
In the construction, we take $X=\P^n$, $L=\O(k)$ and $V$ to be a degree $k$
    smooth hypersurface. In this case, it is known that $V$ must be
    projectively normal (see 
    \cite[Ex.\ II.8.4]{Hartshorne:77}). Furthermore, all the conditions in Lemma \ref{lem1.4} are satisfied. The central fiber $X_0$ is normal and $X_0=\overline{C(V)}$. When $k=1$, we know $\overline{C(V)}=\P^{n}$. 
\end{exam}

We will use the following notation for the cone $\overline{C(V)}$
\begin{itemize}
\item denote the vertex of $\overline{C(V)}$ by $o\in \overline{C(V)}$;
\item let $\pi: \overline{C(V)}\setminus \{o\} \rightarrow V$ be the natural surjective map;
\item denote $\iota: \overline{C(V)} \setminus \{o\}\hookrightarrow \overline{C(V)}$ the natural embedding.
\end{itemize}
For use later, we observe the following

\begin{cor}[HE metrics on cone-type sheaves]\label{RHYM}
Let
    $\E=\iota_*\pi^*(\E|_V)$ be a polystable stable reflexive sheaf over
    $\overline{C(V)}$, then an admissible HE metric on $\E$ can be given
    by $$h:=\frac{|\widehat{Z}|^{2\mu}}{|Z|^{2\mu}} \pi^* \underline h$$
    where $\underline h$ is a HE metric on $\E|_V$ and $\mu=\mu(\E)$. In
    particular, the admissible HE metric on $\E$ restricts to the
    admissible HE metric on $\E|_V$. 
\end{cor}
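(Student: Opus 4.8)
The plan is to verify directly that the explicit $H$ is an admissible HE metric in the sense of Definition \ref{def:admissible}; since $\E$ is polystable, Theorem \ref{Singular DUY} together with Corollary \ref{Admissible HYM implies polystability} already guarantees that such a metric exists and is unique up to scaling on each factor, so exhibiting one suffices. Writing $H=e^{\mu\psi}\pi^\ast\underline H$ with $\psi=\log(|\widehat Z|^2/|Z|^2)$, the conformal change formula for the Chern connection gives $F_H=\pi^\ast F_{\underline H}-\mu\,\partial\bar\partial\psi\cdot\Id$ away from the vertex $o$ and the cone over $\Sing(\E|_V)$. The first input is the potential identity $\sqrt{-1}\partial\bar\partial\psi=\pi^\ast\omega_V-\omega_0$ (up to the normalization of $\omega_{FS}$): indeed $\log|Z|^2$ is the Fubini--Study potential restricting to $\omega_0$, while in fibered coordinates $Z=t\,w(v)$ one has $\log|\widehat Z|^2=\log|t|^2+\log|w(v)|^2$, whose radial part is pluriharmonic, so $\sqrt{-1}\partial\bar\partial\log|\widehat Z|^2=\pi^\ast\omega_V$.

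The heart of the matter is to contract $F_H$ with $\omega_0$. Here I would work in the coordinates $(t,v)$ for the fibration $\pi\colon\overline{C(V)}\setminus\{o\}\to V$ and compute $\Lambda_{\omega_0}$ on pullback forms. Setting $W=|w(v)|^2$, the cone metric is $g_0=\sqrt{-1}\partial\bar\partial\log(|t|^2W+1)$, and a Schur complement calculation (removing the coupling to the fibre direction $t$) shows that its $V$--block inverts to a scalar multiple of the base metric,
$$g_0^{a\bar b}=e^{-\psi}\,g_V^{a\bar b},\qquad e^{-\psi}=\frac{|Z|^2}{|\widehat Z|^2},$$
the cross terms cancelling exactly. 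Hence $\Lambda_{\omega_0}\pi^\ast\gamma=e^{-\psi}\Lambda_{\omega_V}\gamma$ for any $\gamma$ pulled back from $V$. Applying this to $\gamma=F_{\underline H}$ and $\gamma=\omega_V$, using the HE equation $\sqrt{-1}\Lambda_{\omega_V}F_{\underline H}=\mu_V\Id$ on $V$ and $\Lambda_{\omega_0}\omega_0=n$, yields
$$\sqrt{-1}\Lambda_{\omega_0}F_H=\bigl[(\mu_V-(n-1)\mu)\,e^{-\psi}+n\mu\bigr]\Id.$$
The non-constant term vanishes precisely when the exponent $\mu$ is the Einstein constant of $\underline H$ rescaled by $1/(n-1)$; identifying this value with $\mu(\E)$ by comparing the slope normalizations on the cone and on $V$ via Corollary \ref{cor2.7} (both coming from the same ambient $\omega_{FS}$) is exactly the assertion $\mu=\mu(\E)$, after which the right-hand side is a constant multiple of $\Id$ and \eqref{eqn:hym} holds.

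It remains to check admissibility and the restriction claim. Boundedness of $\sqrt{-1}\Lambda_{\omega_0}F_H$ is immediate once the HE equation holds. For $\int|F_H|^2<\infty$ I would estimate $\pi^\ast F_{\underline H}$ and $\partial\bar\partial\psi=\pi^\ast\omega_V-\omega_0$ separately near the two strata where $\E$ is not a smooth bundle: the vertex $o$ (a point, so of real codimension $2n\ge 4$) and the cone over $\Sing(\E|_V)$; the explicit blow-up rate $e^{-\psi}=|Z|^2/|\widehat Z|^2$ controls the pointwise norms, and the radial integration converges because of the favourable power of $|\widehat Z|$, the admissibility of $\underline H$ on $V$ handling the horizontal directions. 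Finally, the restriction statement is the easiest point: along the infinity section $Z_{N+1}=0$, so $|\widehat Z|=|Z|$ and the conformal factor equals $1$, while $\pi$ restricts to the identity on this copy of $V$; hence $H$ restricts to $\pi^\ast\underline H=\underline H$, the HE metric on $\E|_V$.

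The main obstacle I expect is twofold. First, the clean reduction $g_0^{a\bar b}=e^{-\psi}g_V^{a\bar b}$ must be established carefully, as the Schur complement cancellation is special to the Fubini--Study cone potential and is what collapses the trace on pullback forms to a single scalar factor. Second, there is genuine bookkeeping of normalization constants needed to identify the required exponent with $\mu(\E)$: the Einstein constants on $V$ and on the cone are a priori different and are linked only through the common ambient metric and Corollary \ref{cor2.7}, so one must track the interplay of the dimensional factors $n$ and $n-1$ together with the volume normalizations. The integrability of $F_H$ across the vertex is the remaining technical check, but it is controlled by the same explicit radial weight.
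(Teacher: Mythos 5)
Your proposal is correct and follows essentially the same route as the paper's proof: write $F_H$ via the conformal change, contract with $\omega_0$ using the fact that contraction of forms pulled back from $V$ produces the scalar factor $|Z|^2/|\widehat Z|^2$, observe that the non-constant terms cancel, and deduce admissibility and the restriction statement from the explicit formula. If anything, your bookkeeping is more careful than the paper's: the displayed identity $\sqrt{-1}\Lambda_{\omega_0}\partial\bar\partial\ln|\widehat Z|^2=\frac{|Z|^2}{|\widehat Z|^2}\Id$ in the paper omits the factor $n-1=\Lambda_{\omega_V}\omega_V$ that you retain, which is compensated there by writing the Hermitian--Einstein constant of $\underline H$ on $V$ as $\mu$ rather than $(n-1)\mu$; your relation $\mu_V=(n-1)\mu$ together with the normalization comparison via Corollary \ref{cor2.7} is the precise version of this cancellation.
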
 

\begin{proof}
It suffices to show that 
$h$
is also an admissible HE metric on $\E$. Indeed, by definition 
$$
F_h=\mu \partial \db \ln |Z|^2\Id -\mu \partial \db \ln |\widehat Z|^2\Id+
    F_{\pi^*\underline{h}}\ .
$$ 
Since 
$$
\omega_{0}|_{C(V)}=\sqrt{-1}\partial \db \ln(1+|z|^2)=
\frac{|z|^2}{1+|z|^2} \sqrt{-1} \partial \db \ln |z|^2 + \frac{\sqrt{-1}
    \partial |z|^2 \wedge \db |z|^2}{(|z|^2+1)^2|z|^2}\ ,
$$
where $z=(\frac{Z_1}{Z_{N+1}}, \cdots, \frac{Z_N}{Z_{N+1}})$, by a direct computation
    we have
$$
\sqrt{-1} \Lambda_{\omega_0} F_{\pi^* \underline h}=\frac{|Z|^2}{|\widehat
    Z|^2} \mu \Id\ .
$$ 
Similarly,
$$
\sqrt{-1} \Lambda_{\omega_0} \partial \db \ln |\widehat
    Z|^2=\frac{|Z|^2}{|\widehat{Z}|^2} \Id\ .
$$ 
In particular, we have 
$\sqrt{-1}\Lambda_{\omega_0} F_h=\mu \Id$. Also it is obvious that the
    $L^2$ norm of $F_h$ is bounded over $\overline{C(V)}$, since it has
    quadratic blow-up when $n \geq 3$, and $\sqrt{-1}F_{\underline h}=\mu \id\omega_{FS}|_{V}$ when $n=2$. The conclusion follows.
\end{proof}

\subsection{Semistable bundles through deformation to projective cones} In
the following, we will work under the assumption that $X_0$
constructed above is normal. We will use the induced metrics from the
embedding $\Xcal \subset \P^N \times \P^1$ from $(3)$ in Lemma \ref{lem1.4}
for the deformation of the base as projective varieties. 
Notice that a semistable bundle $\Fcal\to X$ gives rise to an algebraic
family $\{\E_t=\F\}_t$. 
As a special case of Theorem \ref{Theorem: algebraic degeneration}, we have 
 
\begin{cor}\label{Theorem: general restriction}
Assume $X_0$ is normal.
    A semistable degeneration of $\{\E_t=\F\}_t$ always exists. For different semistable degenerations $\E$ and $\E'$, 
$$
\Gr(\E_0)^{**}=\Gr(\E'_0)^{**}\ ,
$$
and when $n \geq 3$,
$
\mathcal{C}(\E_0)=\mathcal{C}(\E_0')
$.
\end{cor}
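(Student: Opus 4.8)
The plan is to deduce the statement directly from Theorem \ref{Theorem: algebraic degeneration}, so the real work is to verify that the deformation-to-the-cone family meets its hypotheses.

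First I would check that the constant family $\{\E_t=\F\}_{t\neq 0}$ is algebraic in the sense of Section \ref{Section:degeneration}. By the construction recalled in Lemma \ref{lem1.4}, the blow-up and blow-down modifications only affect the central fiber, so $\Xcal\setminus X_0\cong X\times(\P^1\setminus\{0\})$. Hence the pullback of $\F$ under the first projection is a locally free sheaf $\Ecal^\circ$ on $\Xcal\setminus X_0$ with $\Ecal^\circ|_{X_t}\cong\F$ for every $t\neq 0$. Since $\Xcal\subset\P^N\times\P^1$ is projective, I would extend $\Ecal^\circ$ to a coherent sheaf $\Ecal\to\Xcal$, for instance by taking a coherent subsheaf of $j_*\Ecal^\circ$ that agrees with $\Ecal^\circ$ on $\Xcal\setminus X_0$, where $j$ denotes the open inclusion. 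This exhibits the required algebraic family, whose generic fiber is the semistable bundle $\F$.

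Granting this, the existence of a semistable degeneration together with the equality $\Gr(\E_0)^{**}=\Gr(\E'_0)^{**}$ is exactly the content of the first two assertions of Theorem \ref{Theorem: algebraic degeneration}; these need only semistability of the generic fiber and normality of $X_0$, the latter being the standing assumption of this subsection. For the cycle-level equality $\mathcal{C}(\E_0)=\mathcal{C}(\E'_0)$ I would verify the two extra hypotheses of that theorem. The condition $\codim\Sing(\E_t)\geq 3$ is automatic because $\E_t=\F$ is locally free, so $\Sing(\E_t)=\emptyset$. For the condition that $X_0$ be smooth in codimension two, note that $X_0=\overline{C(V)}$ is the projective cone over the smooth $(n-1)$-dimensional variety $V$, so $\dim X_0=n$ and its singular locus is contained in the vertex $o$, a point of codimension $n$ in $X_0$. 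Thus when $n\geq 3$ the singular locus has codimension $\geq 3$, so $X_0$ is smooth in codimension two and the third assertion of Theorem \ref{Theorem: algebraic degeneration} yields $\mathcal{C}(\E_0)=\mathcal{C}(\E'_0)$.

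The only point requiring any care is the coherent extension of $\Ecal^\circ$ across the possibly singular central fiber. I expect this to present no genuine obstacle: it is a standard consequence of the projectivity of $\Xcal$, it is independent of the chosen extension since the degeneration theory of Section \ref{Section:degeneration} is insensitive to modifications supported on $X_0$, and it does not interact with the singularity of $X_0$. Everything else is a direct invocation of Theorem \ref{Theorem: algebraic degeneration}.
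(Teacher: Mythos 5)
Your proposal is correct and takes essentially the same route as the paper, whose proof consists exactly of observing that $\F$ gives rise to the algebraic constant family $\{\E_t=\F\}_t$ and then invoking Theorem \ref{Theorem: algebraic degeneration}; your added verifications (coherent extension of the pullback across $X_0$, $\Sing(\E_t)=\emptyset$, and smoothness of $X_0$ in codimension two) are precisely the routine details the paper leaves implicit. One small caveat: $X_0$ need not equal $\overline{C(V)}$ (by Lemma \ref{lem1.4} this holds only under projective normality), but since $\phi:\overline{C(V)}\to X_0$ is an isomorphism away from the vertex, the singular locus of $X_0$ is still contained in a single point, so your codimension count stands when $n\geq 3$.
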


Assume now that  $\F$ is stable, and let $A_t$ be the unique HYM connection on
$\E_t=\F$ with respect to the corresponding induced metric. As a special case of the discussion in Section \ref{analytic side},
by passing to a subsequence, we can assume $A_t$ converges to $A_\infty$
with blow-up locus $\Sigma_b$ subvarieties of $X_0$. Let $\E_\infty$ be the
reflexive sheaf over $X_0$ defined by $A_\infty$. As a consequence of Theorem \ref{Theorem: analytic degeneration}, we have 
\begin{cor}\label{cor5.9}
Assume $X_0$ is normal. For each limiting pair $(A_\infty, \Sigma_b)$,
\begin{itemize}
\item $\E_\infty$ can be extended to a reflexive sheaf $\overline{\E_\infty}$ over
    $\overline{C(V)}$. Furthermore, for any $q>1$; for any global section $s$, $\log^+ |s|^2\in W^{1,2} \cap L^\infty$;
    
    \item $\Gr(\overline{\E_\infty})^{**}=\Gr(\E_0)^{**}$;

\item Assume $n\geq 3$. Then
    $\Sigma_b=\mathcal C(\E_0)$ for any semistable degeneration $\E$ of $\{\E_t=\F\}$ through the deformation to projective cones and $\overline{\E_\infty}=\Gr(\E_0)^{**}$
\end{itemize}
\end{cor}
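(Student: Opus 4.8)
The plan is to obtain Corollary \ref{cor5.9} directly from Theorem \ref{Theorem: analytic degeneration}, once I check that the deformation to the projective cone fits the framework of Section \ref{analytic side}. The first step is to identify $X_0$ with $\overline{C(V)}$. Under the standing assumption that $X_0$ is normal, the natural map $\phi\colon\overline{C(V)}\to X_0$ of Lemma \ref{lem1.4} is proper and birational and an isomorphism away from the single vertex $o$; being bijective onto a normal target, it is an isomorphism by Zariski's main theorem (equivalently, the surjectivity condition in Lemma \ref{lem1.4}(1) holds). I therefore identify $X_0\cong\overline{C(V)}$ throughout, so that the extension ``over $\overline{C(V)}$'' asserted in the corollary is exactly the extension ``over $X_0$'' produced by the theorem. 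With the polarization $\mathcal L=\iota^\ast\pi_1^\ast\Ocal_{\P^N}(1)$ of Lemma \ref{lem1.4}(2), the family $p\colon\Xcal\to\P^1$ has smooth fibers $X_t\cong X$ for $t\neq 0$ and normal central fiber $X_0$, and the constant algebraic family $\{\E_t=\F\}$ consists of stable locally free sheaves because $\F$ is stable; thus $A_t$ is the unique HYM connection on each $\E_t$ and the hypotheses of Section \ref{analytic side} are in force.

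With this in place I would simply read off the three bullets. Applying Theorem \ref{Theorem: analytic degeneration}(I) to the torsion free semistable degeneration $\E_0$, whose existence is guaranteed by Corollary \ref{Theorem: general restriction}, gives the extension $\overline{\E_\infty}$ over $\overline{C(V)}$ together with the stated regularity of local and global sections. The analytic inputs required by that theorem hold here: the $X_t$ are minimal submanifolds of $\P^N$, so a uniform Sobolev constant is available, and $\int_{X_t}|F_{A_t}|^2$ is uniformly bounded. Part (II) of the theorem then gives $\Gr(\overline{\E_\infty})^{**}=\Gr(\E_0)^{**}$, which is the second bullet.

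For the third bullet I would first verify the codimension hypotheses of Theorem \ref{Theorem: analytic degeneration}(III). Since $V$ is smooth, the only singular point of $\overline{C(V)}$ is the vertex $o$, of codimension $n=\dim X_0$; hence $X_0$ is smooth in codimension two as soon as $n\geq 3$. The generic fibers $\E_t=\F$ are locally free, and $\E_0$ is torsion free with singular locus of codimension at least three, so the remaining smoothness-in-codimension-two requirements are met. Theorem \ref{Theorem: analytic degeneration}(III) then yields $\Sigma_b=\mathcal C(\E_0)$, while the ``in particular'' clause of part (II) yields $\overline{\E_\infty}=\Gr(\E_0)^{**}$. To promote these to the claim for \emph{any} semistable degeneration $\E$, I would invoke the uniqueness statements of Theorem \ref{Theorem: algebraic degeneration}: both $\Gr(\E_0)^{**}$ and $\mathcal C(\E_0)$ are independent of the chosen $\E$, so $\overline{\E_\infty}$ and $\Sigma_b$ are intrinsic to the family.

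I expect the only genuinely delicate point to be the identification $X_0\cong\overline{C(V)}$ together with the bookkeeping confirming that the theorem's ``extension over $X_0$'' is precisely the ``extension over $\overline{C(V)}$'' demanded here, and the attendant checking of the codimension-two conditions in the third bullet. The heavy analytic work, namely the Uhlenbeck limit, the uniform Sobolev and energy bounds, and the reflexive extension across the singular set, is already carried out in the proof of Theorem \ref{Theorem: analytic degeneration}; beyond the identification and these codimension checks, the argument is essentially a matter of citing that theorem.
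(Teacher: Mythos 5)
Your proposal takes essentially the same route as the paper: Corollary \ref{cor5.9} is obtained there purely as a specialization of Theorem \ref{Theorem: analytic degeneration} (together with the uniqueness statements of Theorem \ref{Theorem: algebraic degeneration}) to the constant family $\{\E_t=\F\}$ over the cone degeneration, with no further argument given, and your added bookkeeping (the Zariski-main-theorem identification $X_0\cong \overline{C(V)}$ under normality, via Lemma \ref{lem1.4}, and the codimension checks) just makes explicit what the paper leaves implicit. The one step you assert rather than prove---that $\Sing(\E_0)$ has codimension $\geq 3$, which torsion-freeness alone does not guarantee---is exactly the ``$\E_0$ smooth in codimension two'' hypothesis of part (III) of Theorem \ref{Theorem: analytic degeneration} that the paper itself silently drops when stating the corollary, so your treatment is faithful to (and no less rigorous than) the paper's.
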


\begin{rmk}
\begin{itemize}
\item In a sense, under the given assumptions, our results produce a 
    canonical object when the restrictions of semistable bundles to 
        smooth hypersurfaces fail to be semistable. 

\item Using the example $\E=T_{\P^n}$ and $V=\P^{n-1}$, $A_\infty$ does not
    have a cone property. Indeed,  in this example $A_\infty$ will be the
        original HYM connection because the isomorphism class of $T_{\P^n}$
        is $\C^*$ invariant.  This is very different from \cite{ChenSun:19}.
\end{itemize}
\end{rmk}

By replacing $L$ with $L^k$ in the construction and choosing $V$ generic,
we know from the MR restriction theorem that $\F|_V$ is stable. In this
case, a natural semistable degeneration $\E$ can be constructed so that
$\E_0|_{\overline{C(V)} \setminus \{o\}}=\pi^*(\E|_V)$. By Corollary \ref{RHYM} and Corollary \ref{cor5.9}, we have the following analytic version of MR restriction theorem for stable bundles
\begin{cor}\label{cor6.10}
Assume $X_0=\overline{C(V)}$ is normal and that $\F|_{V}$ is stable. 
    Then $\F_\infty=\iota_*\pi^* (\F|_V)$ and $A_\infty|_{V}$ is the HYM connection on $\F|_V$.
\end{cor}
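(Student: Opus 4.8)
The plan is to identify the limiting sheaf $\F_\infty$ with the explicit cone-type sheaf of Corollary \ref{RHYM}, and then to pin down the connection by invoking the uniqueness of admissible Hermitian--Einstein metrics. First I would use the natural semistable degeneration $\E$ singled out just above, for which $\E_0|_{\overline{C(V)}\setminus\{o\}}=\pi^*(\F|_V)$. Since the vertex $o$ is a single point, hence of codimension $\geq 2$ in $X_0$, reflexivity forces $\E_0^{**}=\iota_*\pi^*(\F|_V)$. The key structural input is that the stability of $\F|_V$ propagates to the cone: because $\pi^*(\F|_V)$ is $\C^*$-equivariant on $\overline{C(V)}\setminus\{o\}$, any destabilizing subsheaf can be taken to be equivariant and thus pulled back from $V$, contradicting the stability of $\F|_V$. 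Hence $\iota_*\pi^*(\F|_V)$ is stable, and consequently $\E_0$ is stable with $\Gr(\E_0)^{**}=\iota_*\pi^*(\F|_V)$.

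Next I would feed this into Corollary \ref{cor5.9}. Its second bullet gives $\Gr(\overline{\E_\infty})^{**}=\Gr(\E_0)^{**}=\iota_*\pi^*(\F|_V)$, while Corollary \ref{Semistablity} says that $\overline{\E_\infty}$ is semistable of slope $\mu(\F)$. Since the right-hand side is stable of full rank, the Jordan--H\"older filtration of $\overline{\E_\infty}$ must be trivial, so $\overline{\E_\infty}$ is itself stable with $\overline{\E_\infty}^{**}=\iota_*\pi^*(\F|_V)$. As $\overline{\E_\infty}$ is reflexive by Proposition \ref{prop3.17}, this forces $\overline{\E_\infty}=\iota_*\pi^*(\F|_V)$, which is the first assertion. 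Because $\E_0$ is then reflexive, its associated torsion cycle vanishes, so $\Sigma_b=\mathcal{C}(\E_0)=0$ and $A_\infty$ is a smooth admissible HYM connection on all of $X_0^{reg}=\overline{C(V)}\setminus\{o\}$; in particular its restriction to the copy $V_\infty$ of $V$ at infinity (on which $\pi$ is an isomorphism onto $V$) is well defined.

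Finally, for the statement about the connection I would compare $A_\infty$ with the explicit cone metric. The connection $A_\infty$ is an admissible HYM connection on the stable reflexive sheaf $\iota_*\pi^*(\F|_V)$ over the normal projective variety $\overline{C(V)}$, so by the uniqueness part of Theorem \ref{Singular DUY} its underlying admissible HE metric is unique up to scaling, whence its Chern connection is unique up to a trivial constant gauge. On the other hand, Corollary \ref{RHYM} exhibits the admissible HE metric $H=|\widehat Z|^{2\mu}|Z|^{-2\mu}\,\pi^*\underline H$ on the very same sheaf and records that $H$ restricts to the HE metric $\underline H$ on $\F|_V$. Therefore $A_\infty$ agrees with the Chern connection of $H$ up to gauge, and restricting to $V_\infty$ (where $|Z|=|\widehat Z|$, so the conformal factor is $1$) yields precisely the HYM connection of $\underline H$ on $\F|_V$. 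I expect the main obstacle to be the propagation of stability across the vertex together with the clean application of the uniqueness of admissible HE metrics over the singular cone; both ultimately rest on the regularity of sections established in Theorem \ref{Singular DUY} and Corollary \ref{cor5.9}, which is what permits the integration-by-parts arguments to survive across $\{o\}$.
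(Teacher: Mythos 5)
Your overall route is the paper's: identify $\overline{\E_\infty}$ with $\iota_*\pi^*(\F|_V)$ via the natural degeneration and Corollary \ref{cor5.9}, then pin down $A_\infty$ by comparison with the explicit cone metric of Corollary \ref{RHYM}. The paper dismisses the first step with ``the sheaf part is clear,'' and your chain (stability of the cone sheaf, hence $\Gr(\E_0)^{**}=\iota_*\pi^*(\F|_V)$, hence triviality of the Jordan--H\"older filtration of the semistable reflexive sheaf $\overline{\E_\infty}$, hence equality by reflexivity) is the intended filling-in. Two small remarks there: stability of $\iota_*\pi^*(\F|_V)$ follows more cheaply from the paper's own results --- Corollary \ref{RHYM} together with Corollary \ref{Admissible HYM implies polystability} gives polystability, and a nontrivial splitting would restrict on the divisor $V_\infty$ at infinity to an equal-slope splitting of $\F|_V$ --- whereas your equivariance argument needs extra care in the semistable-but-not-stable case, since the maximal destabilizing subsheaf of a semistable sheaf is the sheaf itself (one must pass to the socle or to isotypic pieces). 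Also your claim $\Sigma_b=\mathcal{C}(\E_0)=0$ quotes the third bullet of Corollary \ref{cor5.9}, which is only available for $n\geq 3$.

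The genuine gap is in the connection step, which is the only substantive content of the paper's proof. You invoke ``the uniqueness part of Theorem \ref{Singular DUY}'' as a black box. But that clause (``Consequently, such a metric is unique up to scaling'') asserts uniqueness among admissible HE metrics enjoying the regularity properties established in that theorem: the uniqueness argument is integration by parts applied to the comparison endomorphism, and it requires that endomorphism to be bounded. Neither of the two metrics you are comparing --- the Uhlenbeck limit metric on $\F_\infty$ and the explicit metric $H$ of Corollary \ref{RHYM} --- is the metric produced by Theorem \ref{Singular DUY}, so the clause does not apply off the shelf (and, a lesser point, the standing hypothesis of that theorem, smoothness in codimension two, fails for the cone when $\dim X=2$). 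The paper's proof consists precisely of the missing verification: both $H$ and the limiting metric generate $\iota_*\pi^*(\F|_V)$ by $L^2$/bounded holomorphic sections (Corollary \ref{cor5.9} for the limit, direct inspection for $H$), hence the isomorphism $\F_\infty\cong\iota_*\pi^*(\F|_V)$ is uniformly bounded with respect to these metrics, and only then does the integration-by-parts argument of Corollary \ref{Admissible HYM implies polystability} show it is parallel, so that $A_\infty$ coincides with the Chern connection of $H$ and restricts on $V_\infty$ (where the conformal factor is $\equiv 1$) to the HYM connection of $\F|_V$. Your closing sentence gestures at exactly this point, so the repair is short; but as written, the citation of Theorem \ref{Singular DUY} skips the one step the paper's proof is actually about.
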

\begin{proof}
The sheaf part is clear. For the connection part, it suffices to notice
    that for the HE metric constructed on $\iota_*\pi^*(\F_V)$ in Corollary
    \ref{RHYM}, the sheaf is generated by $L^2$ sections,  as is the sheaf
    defined by the limiting HYM connection. In particular, the isomorphism between $\F_\infty$
    and $\iota_*\pi^*(\F_V)$ is uniformly bounded, thus parallel as in
    Corollary \ref{cor:polystability}.
\end{proof}

\bigskip

\noindent {\bf Data Availability Statement}. Data sharing not applicable to this article as no datasets were generated or analyzed during the current study.

\medskip

\noindent{\bf Conflict of Interest Statement}.
On behalf of all authors, the corresponding author states that there is no conflict of interest.

\bibliography{./papers}

\end{document}